\def\paperversion{2}
\ifnum\paperversion=1
\documentclass[opre,blindrev]{informs4}
\fi
\ifnum\paperversion=2
\documentclass[opre]{informs4}
\fi
\OneAndAHalfSpacedXI %

\usepackage{amsmath,amssymb,amsfonts}
\usepackage{amsmath}
\usepackage{hyperref}
\usepackage{xcolor}
\hypersetup{%
  colorlinks=true,%
  linkcolor={blue},%
  linkbordercolor=red,%
  citecolor={black}
}
\usepackage{endnotes}
\let\footnote=\endnote

\usepackage{graphicx}
\usepackage{bm}
\usepackage{multirow}
\usepackage{cleveref}

\usepackage[small, margin=1cm]{caption}

\usepackage{color}
\definecolor{strcolor}{rgb}{0.6, 0.2, 0.6}
\definecolor{commentcolor}{rgb}{0.3125, 0.5, 0.3125}
\definecolor{keycol}{rgb}{0, 0, 1}
\usepackage{subfigure}

\usepackage{url}

\DeclareMathOperator{\Diag}{Diag}
\DeclareMathOperator{\tr}{tr}

\renewcommand*{\qed}{\hfill\ensuremath{\square}}

\newtheorem{observation}{Observation}

\renewcommand{\Re}{\mathbb{R}}
\renewcommand{\S}{\mathcal{S}}

\newcommand{\M}{\mathcal{M}}

\def\blot{\quad \mbox{$\vcenter{ \vbox{ \hrule height.4pt
				\hbox{\vrule width.4pt height.9ex \kern.9ex \vrule width.4pt}
				\hrule height.4pt}}$}}

\usepackage{natbib}
\bibpunct[, ]{(}{)}{,}{a}{}{,}%

\TheoremsNumberedThrough     %
\ECRepeatTheorems

\EquationsNumberedThrough    %

\gdef\AQ#1{}
\gdef\CQ#1{}
\begin{document}
	
\def\COPYRIGHTHOLDER{INFORMS}%
\def\COPYRIGHTYEAR{2017}%
\def\DOI{\fontsize{7.5}{9.5}\selectfont\sf\bfseries\noindent https://doi.org/10.1287/opre.2017.1714\CQ{Word count = 9740}}

	\RUNAUTHOR{YL.} %

	\RUNTITLE{Adjustable Robust Optimization via Fourier-Motzkin Elimination}

\TITLE{The Augmented Factorization Bound for Maximum-Entropy Sampling}

	\ARTICLEAUTHORS{

\AUTHOR{Yongchun Li}
\AFF{Department of Industrial and Systems Engineering,
University of Tennessee, Knoxville, 37966, TN, USA, ycli@utk.edu}

}

	\ABSTRACT{The maximum-entropy sampling problem (MESP)  aims to select the most informative principal submatrix of a prespecified size from a given covariance matrix. 
  This paper proposes an augmented factorization bound for MESP based on concave relaxation.  
  By leveraging majorization and Schur-concavity theory,
 we demonstrate that this new bound dominates the classic factorization bound of \cite{nikolov2015randomized} and a recent upper bound proposed by \cite{li2024d}. Furthermore, we provide theoretical guarantees that quantify how much our proposed bound improves the two existing ones and establish sufficient conditions for when the improvement is strictly attained. These results allow us to refine the celebrated approximation bounds for the two approximation algorithms of MESP.  Besides, motivated by the strength of this new bound, we develop a variable fixing logic for MESP from a primal perspective. Finally, our numerical experiments demonstrate that our proposed bound achieves smaller integrality gaps and fixes more variables than the tightest bounds in the MESP literature on most benchmark instances, with the improvement 
 being particularly significant when the condition number of the covariance matrix is small.

 }

\KEYWORDS{maximum-entropy sampling, nonlinear integer programming, concave programming, matrix factorization, Schur-concavity}%
\maketitle

\section{Introduction} \label{sec:intro}
The maximum-entropy sampling problem (MESP)  arises in spatial statistics and information theory, which  was introduced by the celebrated work of \cite{shewry1987maximum}. 
MESP aims to select the most informative subset of $s$ variables from a total of 
$n$ variables to maximize the collected information, measured by entropy \citep{ko1995exact}.
It has been widely applied to designing environmental monitoring networks \citep{caselton1984optimal,ozkul2000entropy}.  
When dealing with variables that follow  Gaussian or more general  multivariate elliptical distributions, MESP is recast as a combinatorial optimization problem: 
\begin{align}\label{mesp}
\centering
  z^* := \max_{S} \left\{\log \det \left(\bm C_{S,S}\right): S\subseteq [n], |S| = s \right \}, \tag{MESP}
\end{align}
where $\log\det$ denotes the natural logarithm of the determinant function, $\bm C$ is a sample covariance matrix obtained from the observations of $n$  random variables,
$s\le n$ is a user-specified parameter, and for a subset $S\subseteq [n]$, $\bm C_{S,S}$ denotes a principal submatrix of $\bm C$ indexed by $S$.
We assume that the matrix $\bm C$ is  positive definite, a condition commonly used in the \ref{mesp} literature and well satisfied by the benchmark instances (see \citealt{ko1995exact, sebastiani2000maximum,  anstreicher2018maximum,  anstreicher2020efficient} and references therein).

 \ref{mesp} is computationally challenging and known to be NP-hard, as \cite{ko1995exact} demonstrated. Indeed, \ref{mesp} has no constant-factor polynomial-time approximation algorithm \citep{civril2013exponential}. The primary solution method for solving \ref{mesp} to optimality is branch-and-bound (see the excellent survey by \citealt[chapter 2]{fampa2022maximum} and many references they contain). In practice,  branch-and-cut can quickly find a (near-)optimal solution after only a few iterations; however, proving its optimality can be substantially time-consuming. A tight upper bound of \ref{mesp} is often desired to accelerate this process. 
Inspired by the Schur-concavity of the objective function in \ref{mesp}, as discussed in \Cref{subsec:mono},  this paper develops a tighter upper bound than directly factorizing $\bm C$, a technique commonly used in the literature on \ref{mesp} (see, e.g., \citealt{nikolov2015randomized, chen2023computing,li2024best}). As detailed below,
our method starts with subtracting a scaled identity matrix from $\bm C $ before factorization. 

\begin{remark}\label{remark:decom}
For any $t$, $0\le t\le \lambda_{\min}(\bm C)$,
the matrix $\bm C - t \bm I$ is positive semidefinite, and we denote by $\bm A(t) \in \Re^{n \times n}$ the Cholesky factor of  $\bm C - t \bm I$, i.e., 
\begin{align} \label{eq_decom}
\bm C - t \bm I = \left[\bm A(t)\right]^{\top} \bm A(t). 
\end{align} 
\end{remark}

Note that the rank of the matrix $\bm C-t\bm I$ varies with the value of $t$. Specifically, for $0\le t < \lambda_{\min}(\bm C)$, the matrix is full-rank; however, at $t=\lambda_{\min}(\bm C)$, it becomes singular. The Choleskey factor of a rank-$r$ matrix contains $n-r$ rows of all zeros. These zero rows can be removed to yield a Choleskey factor of size $r\times n$. Thus,  the Cholesky factor is not unique in this context. Fortunately, the bound derived from the matrix factorization is independent of the particular Choleskey factor employed, based on a result of \cite{chen2023computing}. For consistency,  we therefore compute a sized-$n\times n$ Cholesky factor for any $t$.

\subsection{Related work}
The upper bounds of \ref{mesp} have been derived in various ways. An eigenvalue-based upper bound was first introduced by \cite{ko1995exact}. Following this, a variety of eigenvalue-based bounding methods have subsequently been developed and investigated by \cite{anstreicher2004masked,burer2007solving,hoffman2001new,lee2003linear}. 
Another seminal approach to developing upper bounds for \ref{mesp} is based on the continuous relaxations of its equivalent concave integer programs. The classic work of \cite{anstreicher1996continuous,anstreicher1999using} first developed a concave relaxation for \ref{mesp}. Since then, researchers have actively developed different concave integer programs to achieve strong concave relaxations and improve existing methods (see \citealt{nikolov2015randomized,anstreicher2018maximum,anstreicher2020efficient,li2024d}). These bounding methods were further combined and refined by \cite{chen2021mixing,chen2023computing,chen2024masking}.
While no bounding technique wins in all test instances, the ``Linx" bound, as introduced by \cite{anstreicher2020efficient}, and the \ref{eq:fact} bound of \cite{nikolov2015randomized} seem  to provide the tightest upper bounds for \ref{mesp} from a computational perspective. Later, \cite{chen2023computing} applied the known mixing technique to combine them, which can further decrease the bound, especially for values of $s$ close to $n/2$. We refer to the mixing bound as ``Mix-LF" in \Cref{sec:num}. 

Our \ref{eq:upper} bound can be viewed as an augmentation of \ref{eq:fact} of \cite{nikolov2015randomized}  by employing  a general $t$, $0\le t\le \lambda_{\min}(\bm C)$, in \eqref{remark:decom}, leading to a notable reduction in integrality gaps on most test instances. Nevertheless, \cite{nikolov2015randomized} focused on $t=0$ in \eqref{eq_decom} to directly factorize the matrix $\bm C$.
\ref{eq:fact} has been widely recognized for its high effectiveness and computational efficiency.
Hence, 
\ref{eq:fact} and its properties have been extensively studied by \cite{chen2023computing,li2024best,fampa2024outer} and applied to different variants of \ref{mesp}, including generalized \ref{mesp} (\citealt{ponte2024convex}), the D-optimal data fusion (DDF) problem (\citealt{li2024d}),  and the D-optimal design problem (\citealt{ponte2023branch}).
In particular, \cite{li2024d} 
decomposed the matrix $\bm C$ into the form of \eqref{eq_decom} in which they set $t=\lambda_{\min}(\bm C)$, establishing the equivalence between \ref{mesp} and DDF. DDF results from an application to sensor placement  in power systems \citep{li2011phasor}. DDF aims to select a subset of rank-one positive semidefinite matrices to add to the initial Fisher information matrix, so as to maximize the D-optimality of the updated matrix. 
The continuous relaxation of DDF is concave and naturally provides a practical upper bound- \ref{eq:ddf} for \ref{mesp}. 

\subsection{Contributions and Outline}

In \Cref{sec:cip}, we convert \ref{mesp} into a concave integer program for any $t$ using \Cref{remark:decom} and the concave envelope technique, which leads to a new upper bound based on the concave relaxation- \ref{eq:upper}.
    
  \Cref{subsec:mono} highlights the advantages of the proposed factorization method in \Cref{remark:decom}  for improving the upper bound,  through analyzing how the parameter $t$ affects the performance of \ref{eq:upper}. Specifically,
\begin{enumerate}
    \item[(i)] By leveraging  the theory of majorization and Schur-concave functions, we establish that the \ref{eq:upper} bound decreases as $t$ increases in Subsection \ref{subsec:mont_augfact}; 
    \item[(ii)] We demonstrate that for any strictly positive $t$, \ref{eq:upper} is tighter than the two existing upper bounds, \ref{eq:fact} and \ref{eq:ddf}, and provide theoretical guarantees for their difference in optimal values in Subsections \ref{subsec:fact} and \ref{subsec:ddf}, respectively.  Besides, \ref{eq:upper} allows us to enhance the best-known and first-known approximation bounds of the sampling and local search algorithms for \ref{mesp}, respectively; and 
    \item[(iii)] From a primal perspective, Subsection \ref{subsec:fix} develops a variable fixing logic based on a feasible solution of \ref{eq:upper} at $t=\lambda_{\min}(\bm C)$.
\end{enumerate}

The numerical experiments in \Cref{sec:num} demonstrate the superior performance of \ref{eq:upper} across various test instances, compared to most promising bounds in the literature on \ref{mesp}. \Cref{sec:con} summarizes the paper and points to future work.

\noindent\textit{Notations:}  We use bold lower-case letters (e.g., $\bm{x}$) and bold upper-case letters (e.g., $\bm{X}$) to denote vectors and matrices, respectively, and use corresponding non-bold letters (e.g., $x_i, X_{ij}$) to denote their components. We let $\S^n,\S_+^n$ denote the set of all the  $n\times n$ symmetric real matrices and {the} set of all the $n\times n$ symmetric positive semidefinite matrices, respectively.
 We let $\Re^n$ denote the set of all the $n$-dimensional vectors and let $\mathbb{R}_+^n$ denote the set of all the $n$-dimensional nonnegative vectors.
{We let $\bm{1}$ denote the zero vector and let $\bm I$ denote the identity matrix, with their dimensions being clear from the context.}  Given a positive integer $n$ and a positive integer $s\le n$, we let $[n]:=\{1,2,\cdots, n\}$, let $[s,n]:=\{s,s+1,\cdots, n\}$,  and let $\mathbb{I}_s$ be a vector with the first $s$ elements as 1 and the rest as 0. 
For a vector $\bm y \in \Re^n$, we let $y^{\downarrow}_i$ denote the $i$-th largest element of $\bm y$ for each $i\in [n]$, let $\Diag(\bm y)$ denote a diagonal matrix whose diagonal entries consist of the vector $\bm y$, let $\sqrt{\bm y}$ denote a vector where each entry is the square root of that in $\bm y$.
For a  symmetric matrix $\bm{A}$, 
we let $\lambda_{\min}(\bm{X})$ and $\lambda_{\max}(\bm{X})$ denote the smallest and largest eigenvalues of $\bm{X}$, respectively, and let $\bm \lambda(\bm X)$ denote the eigenvalue vector, with eigenvalues sorted in nonincreasing order, that is, $\lambda_i(\bm X)$ is the $i$-th largest eigenvalue of $\bm X$ for each $i\in [n]$. Additional notation will be introduced later as needed.

\section{Reformulating \ref{mesp} as a concave integer program}\label{sec:cip}
In this section, we develop an equivalent concave integer program for \ref{mesp}, based on the proposed  factorization method for matrix $\bm C$ in \Cref{remark:decom} and the concave envelope technique.

\subsection{A naive reformulation of \ref{mesp}}
This subsection presents an equivalent  reformulation of \ref{mesp}, but it is not a concave integer program. 
We begin by introducing the following matrix and function.
\begin{definition}\label{def:matrix}
For any $t$, $0\le t\le \lambda_{\min}(\bm C)$ and a vector $\bm x\in [0,1]^n$, we define a matrix
$$\M_t(\bm x) \in \S_+^n := \sum_{i\in [n]} x_i  \bm a_i(t)  \left[\bm a_i(t)\right]^{\top},$$
where $\bm a_i(t) \in \Re^n$ is the $i$-th column of $\bm A(t)$ for each $i\in [n]$, with $\bm A(t)$ being defined in \Cref{remark:decom}.
\end{definition}
\begin{definition}\label{def:phi}
For a matrix $\bm X \in \S_+^n$ with the eigenvalues $\bm \lambda(\bm X)\in \Re_+^n$, an integer $s\in [n]$, and a constant $t\ge 0$,  we define a function
$$\Phi_s(\bm X; t) :=\sum_{i\in [s]}\log\left(\lambda_{i}(\bm X)+t\right). $$
\end{definition}

Next,  we rewrite \ref{mesp}  as the problem \eqref{eq_mesp} using the function $\Phi_s$. 
When $t=0$, \eqref{eq_mesp} reduces to a known reformulation derived by \cite{li2024best} (see also \citealt{chen2023computing}). We extend this formulation to any $t$ within the range $[0, \lambda_{\min}(\bm C)]$. The most striking result derived from this extension is that as $t$ increases,  the  Lagrangian dual  bound of  \eqref{eq_mesp} becomes tighter, as demonstrated later.

\begin{proposition}\label{prop:mesp}
For any $t$ with $0\le t\le \lambda_{\min}(\bm C)$, \ref{mesp} can be reduced to
\begin{align}\label{eq_mesp}
z^* = \max_{\bm x\in \{0,1\}^n}  \bigg\{\Phi_s\left(\M_t(\bm x); t\right):  \sum_{i\in [n]} x_i = s\bigg\}.
\end{align}
\end{proposition}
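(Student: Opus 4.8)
The plan is to prove the equivalence pointwise. I will show that for every binary vector $\bm x \in \{0,1\}^n$ with $\sum_{i\in[n]} x_i = s$, the objective value $\Phi_s(\M_t(\bm x); t)$ equals $\log\det(\bm C_{S,S})$, where $S := \{i \in [n] : x_i = 1\}$ is the size-$s$ support of $\bm x$. Since $\bm x \mapsto S$ is a bijection between the feasible points of \eqref{eq_mesp} and the size-$s$ subsets of $[n]$, maximizing over $\bm x$ then reproduces $z^* = \max_{|S|=s}\log\det(\bm C_{S,S})$, which is exactly the value of \ref{mesp}.

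For the pointwise identity, I fix such an $\bm x$ and let $\bm B \in \Re^{n \times s}$ collect the columns $\{\bm a_i(t)\}_{i \in S}$ of $\bm A(t)$. By \Cref{def:matrix}, $\M_t(\bm x) = \sum_{i \in S} \bm a_i(t)[\bm a_i(t)]^{\top} = \bm B \bm B^{\top}$, a positive semidefinite matrix of rank at most $s$. The key step is the standard fact that $\bm B \bm B^{\top} \in \S_+^n$ and the $s\times s$ Gram matrix $\bm B^{\top} \bm B$ share the same nonzero eigenvalues; since $\bm B \bm B^{\top}$ has at most $s$ nonzero eigenvalues, its $s$ largest eigenvalues are exactly $\lambda_1(\bm B^{\top} \bm B) \ge \cdots \ge \lambda_s(\bm B^{\top} \bm B)$, padded by zeros when the rank is strictly below $s$. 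Substituting into \Cref{def:phi} gives
\[
\Phi_s(\M_t(\bm x); t) = \sum_{i\in[s]} \log\!\left(\lambda_i(\bm B^{\top} \bm B) + t\right) = \log\det\!\left(\bm B^{\top} \bm B + t \bm I\right),
\]
where the last equality holds because $\bm B^{\top} \bm B + t\bm I$ is $s \times s$ with eigenvalues $\lambda_i(\bm B^{\top} \bm B) + t$.

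It then remains to recognize the right-hand matrix as $\bm C_{S,S}$. From the factorization $\bm C - t\bm I = [\bm A(t)]^{\top} \bm A(t)$ in \Cref{remark:decom}, the $(j,k)$ entry of $\bm B^{\top} \bm B$ equals $[\bm a_j(t)]^{\top} \bm a_k(t) = (\bm C - t\bm I)_{j,k}$ for $j,k \in S$, so $\bm B^{\top} \bm B = \bm C_{S,S} - t\bm I$ and hence $\bm B^{\top} \bm B + t\bm I = \bm C_{S,S}$. This yields $\Phi_s(\M_t(\bm x); t) = \log\det(\bm C_{S,S})$, completing the pointwise identity.

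The only delicate point is the eigenvalue bookkeeping in the middle step — ensuring that the passage from the $n\times n$ matrix $\M_t(\bm x)$ to the $s\times s$ Gram matrix $\bm B^{\top} \bm B$ captures precisely the top $s$ eigenvalues used by $\Phi_s$, including the degenerate case $\rank(\bm B) < s$ where some of those eigenvalues vanish. I expect this to cause no real trouble: for $t>0$ each argument $\lambda_i(\bm B^{\top} \bm B) + t \ge t > 0$ keeps the logarithms well defined, while for $t = 0$ the positive definiteness of $\bm C$ forces $\bm C_{S,S} = \bm B^{\top} \bm B$ to be positive definite, so all $s$ eigenvalues are strictly positive. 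I anticipate this short case analysis, rather than any substantive computation, to be the main thing to get right.
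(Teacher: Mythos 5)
Your proposal is correct and follows essentially the same route as the paper's proof: both identify $\bm C_{S,S}$ with $[(\bm A(t))_S]^{\top}(\bm A(t))_S + t\bm I$ via the factorization in \Cref{remark:decom}, and both invoke the fact that the $s\times s$ Gram matrix and the $n\times n$ outer-product matrix $\M_t(\bm x)$ share their top $s$ eigenvalues. Your extra care about the rank-deficient case and the positivity of the logarithm arguments is a welcome refinement but does not change the argument.
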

\begin{proof}
    For any  $S\subseteq [n]$, $|S|=s$, from \eqref{eq_decom}, we have that
$
    \log\det(\bm C_{S,S}) = \log\det( \left[\left(\bm A(t)\right)_S\right]^{\top} \left(\bm A(t)\right)_S + 
    t\bm I) = \sum_{i\in [s]} \log(\lambda_i + t),
$
where $\lambda_1\ge \cdots \ge \lambda_s\ge 0$ are the eigenvalues of the matrix $\left[\left(\bm A(t)\right)_S\right]^{\top} \left(\bm A(t)\right)_S$.

For a matrix $\bm V \in \Re^{n\times s}$, it is known that $\bm V^{\top}\bm V$ and $\bm V\bm V^{\top}$ have the same first $s$ largest eigenvalues. The matrices $\left[\left(\bm A(t)\right)_S\right]^{\top} \left(\bm A(t)\right)_S$ and $\sum_{i\in S} \bm a_i(t)  \left[\bm a_i(t)\right]^{\top}$ share this property. By \Cref{def:phi}, 
we have that
\begin{align*}
\sum_{i\in [s]} \log(\lambda_i + t) = \Phi_s\bigg(\sum_{i\in S} \bm a_i(t)  \left[\bm a_i(t)\right]^{\top}; t\bigg)=\Phi_s\bigg(\sum_{i\in [n]} x_i\bm a_i(t)  \left[\bm a_i(t)\right]^{\top}; t\bigg)=\Phi_s\left(\M_t(\bm x); t\right),
\end{align*}
where  $\bm x$ is the binary characteristic vector of the subset $S$, i.e., $x_i=1$ if $i\in S$ and $x_i=0$ if $i\in [n]\setminus S$.  \qed
\end{proof}
Unfortunately, the objective function of  \eqref{eq_mesp} is not concave. This motivates us to concavify the  function $\Phi_s$ in the following subsection.

\subsection{The concave envelope}
This subsection presents the concave envelope of  $\Phi_s$, denoted by $\widehat{\Phi}_s$, which allows us to reformulate \eqref{eq_mesp} as a concave integer program.
The \textit{concave envelope} of a  function is the pointwise infimum of all its concave underestimators. First, let us introduce a function.  
\begin{definition}\label{def:psi}
For a vector $\bm y \in \Re_+^n$ and an integer $s\in [n]$, suppose that $k$, $0\le k\le s-1$, is  an integer such that 
 $y^{\downarrow}_{k} > \frac{1}{s-k} \sum_{i\in[k+1,n]} y^{\downarrow}_{i} \ge y^{\downarrow}_{k+1}$, with the convention $y^{\downarrow}_0=\infty$. Then, we define 
  \[\psi_s(\bm y)=\sum_{i\in [k]} \log\left(y^{\downarrow}_i\right) + (s -k) \log \bigg( \frac{\sum_{i\in [k+1, d]} y^{\downarrow}_i}{s-k}\bigg).\]
\end{definition}
Note that the integer $k$ above  is unique, a technical result from \citet[lemma 14]{nikolov2015randomized}. They also established that $\psi_s$ is a concave function with its subgradient defined below.
\begin{remark}[\citealt{nikolov2015randomized}] \label{remark:grad}
Let $\bm y\in \Re_+^n$ be $y_1\ge \cdots \ge y_n\ge 0$, and $k$ follows from \Cref{def:psi}. Then, $\bm g \in \Re^n_+$ is a subgradient of the function $\psi_s$ at $\bm y$, where 
\[g_i=\frac{1}{y_i} ,\ \ \forall i\in [k], \quad g_i=\frac{s-k}{\sum_{i\in [k+1,n]}y_i} ,\ \ \forall i\in [k+1,n]. \] %
\end{remark}

For $t=0$, previous works have established that the concave envelope $\widehat{\Phi}_s(\bm X; 0)$ equals  $\psi_s(\bm \lambda(\bm X))$ for a matrix $\bm X\in \S_+^n$ (see \citealt{nikolov2015randomized, li2024best}). 
They followed the approach of \cite{hiriart1993convex} that computed the \textit{bi-conjugate} of a function to derive its concave envelope. However, applying their proof directly to a general $t$ can be intricate, specifically due to the complexity of solving the two underlying nonlinear optimization problems. It is somewhat surprising that by using the eigen-decomposition technique and perturbing the eigenvalue vector, we  can smoothly extend the established concave envelope result at $t=0$ to  explicitly describe $\widehat{\Phi}_s$ for any $t$, $0\le t\le \lambda_{\min}(\bm C)$.

\begin{proposition}\label{prop:ce}
For any $t$ with $0\le t\le \lambda_{\min}(\bm C)$ and a matrix $\bm X \in \S_+^n$,  the concave envelope of the function $\Phi_s(\bm X; t)$ is 
$ \widehat{\Phi}_s (\bm X; t) = \psi_s(\bm \lambda(\bm X)+ t\mathbb{I}_s).$
\end{proposition}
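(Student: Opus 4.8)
The plan is to establish the two inequalities $\widehat{\Phi}_s(\bm X;t)\le\psi_s(\bm\lambda(\bm X)+t\mathbb{I}_s)$ and $\widehat{\Phi}_s(\bm X;t)\ge\psi_s(\bm\lambda(\bm X)+t\mathbb{I}_s)$ separately, and in each case to reduce the computation to the already-known envelope at $t=0$, namely $\widehat{\Phi}_s(\cdot\,;0)=\psi_s(\bm\lambda(\cdot))$, rather than recomputing a biconjugate from scratch (which the excerpt flags as intricate). Throughout I will use that the concave envelope is the pointwise least concave majorant, together with the eigenvalue-shift structure $\Phi_s(\bm X;t)=\Phi_s(\bm X+t\bm I;0)$ and the spectral representation of $\psi_s$.

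For the first inequality it suffices to exhibit $G(\bm X):=\psi_s(\bm\lambda(\bm X)+t\mathbb{I}_s)$ as a concave function on $\S_+^n$ that majorizes $\Phi_s(\cdot\,;t)$. The majorization is the easy part: writing $\bm y=\bm\lambda(\bm X)+t\mathbb{I}_s$, the top $s$ entries of $\bm y$ are exactly $\lambda_i(\bm X)+t$, so applying the arithmetic-geometric-mean inequality inside the averaging term of \Cref{def:psi} gives $\psi_s(\bm y)\ge\sum_{i\in[s]}\log y^{\downarrow}_i=\Phi_s(\bm X;t)$. For concavity I would invoke the classical result that $\bm X\mapsto h(\bm\lambda(\bm X))$ is concave on $\S^n$ whenever $h:\Re_+^n\to\Re$ is symmetric and concave; it then remains to check that $h(\bm y):=\psi_s(\bm y^{\downarrow}+t\mathbb{I}_s)$ is symmetric and concave. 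Symmetry is immediate, and on the sorted cone the map $\bm y^{\downarrow}\mapsto \bm y^{\downarrow}+t\mathbb{I}_s$ is translation by the constant vector $t\mathbb{I}_s$, so $h$ inherits concavity there from $\psi_s$. To glue across the ordering boundaries I would verify Schur-concavity by checking that the subgradient of \Cref{remark:grad}, evaluated at $\bm y^{\downarrow}+t\mathbb{I}_s$, is nondecreasing in the coordinate index; this is a short computation using the threshold defining $k$, since it forces $1/z_k<(s-k)/\sum_{i>k}z_i$ at the breakpoint.

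The reverse inequality is the substantive one, and it is where the eigen-decomposition and perturbation idea enters. Given $\bm X=\bm Q\Diag(\bm\lambda(\bm X))\bm Q^{\top}$, I would form the perturbed matrix $\bm X_t:=\bm Q\Diag(\bm\lambda(\bm X)+t\mathbb{I}_s)\bm Q^{\top}\in\S_+^n$, which adds $t$ to the top-$s$ eigenvalues while fixing the eigenvectors. It satisfies $\Phi_s(\bm X;t)=\Phi_s(\bm X_t;0)$ and, by the $t=0$ result, $\psi_s(\bm\lambda(\bm X)+t\mathbb{I}_s)=\psi_s(\bm\lambda(\bm X_t))=\widehat{\Phi}_s(\bm X_t;0)$. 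The goal is then to transport an extremal decomposition of $\bm X_t$ that certifies the $t=0$ envelope back to one for $\bm X$: writing $\bm X_t=\sum_j\theta_j\bm Z_j$ with $\bm Z_j\in\S_+^n$ and $\sum_j\theta_j\Phi_s(\bm Z_j;0)=\widehat{\Phi}_s(\bm X_t;0)$, I would set $\bm X_j:=\bm Z_j-t\,\bm Q_s\bm Q_s^{\top}$ with $\bm Q_s$ the top-$s$ eigenvectors, so that $\sum_j\theta_j\bm X_j=\bm X$ and, provided the values match ($\Phi_s(\bm X_j;t)=\Phi_s(\bm Z_j;0)$), the envelope definition yields $\widehat{\Phi}_s(\bm X;t)\ge\sum_j\theta_j\Phi_s(\bm X_j;t)=\psi_s(\bm\lambda(\bm X)+t\mathbb{I}_s)$.

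The main obstacle is precisely this transport step. It requires choosing the $t=0$ extremal decomposition so that each $\bm Z_j$ is diagonal in the eigenbasis $\bm Q$ of $\bm X$ and carries a spectral gap of at least $t$ at position $s$; only then are the $\bm X_j$ positive semidefinite and does removing $t$ from the top-$s$ directions leave $\Phi_s(\cdot\,;t)$ unchanged. Both reductions — that the optimal decomposition may be taken in a common eigenbasis, and that it inherits the gap $(\lambda_s+t)-\lambda_{s+1}\ge t$ already present in $\bm\lambda(\bm X_t)$ — should follow from Schur-concavity and majorization, since a pinching/averaging argument only improves the objective while preserving the averaged constraint $\sum_j\theta_j\bm Z_j=\bm X_t$. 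I expect the bulk of the work, and the only genuinely delicate point, to be making this gap-preserving, eigenbasis-aligned decomposition rigorous; the domain restriction it encodes is exactly why the correct shift is $t\mathbb{I}_s$, acting only on the selected top-$s$ directions, rather than $t\bm 1$.
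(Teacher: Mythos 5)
Your overall strategy coincides with the paper's: both proofs hinge on the rank-$s$ spectral perturbation $\bm X_t:=\bm X+t\bm Q\Diag(\mathbb{I}_s)\bm Q^{\top}$ and the identity $\Phi_s(\bm X;t)=\Phi_s(\bm X_t;0)$, followed by an appeal to the known $t=0$ envelope $\widehat{\Phi}_s(\cdot\,;0)=\psi_s(\bm\lambda(\cdot))$. The paper then simply asserts that the pointwise identity transfers to the envelopes; you correctly recognize that this is the nontrivial content (the map $\bm X\mapsto\bm X_t$ is not affine, so it does not commute with taking convex combinations), and your upper-bound half is a sound and essentially complete way to supply one of the two missing inequalities: the majorization $\left(\theta\bm y+(1-\theta)\bm z\right)^{\downarrow}\prec\theta\bm y^{\downarrow}+(1-\theta)\bm z^{\downarrow}$ together with Schur-concavity and concavity on the sorted cone does yield global concavity of $\bm y\mapsto\psi_s(\bm y^{\downarrow}+t\mathbb{I}_s)$ (alternatively, write it as $\min_{|S|=s}\psi_s(\bm y+t\bm 1_S)$, a pointwise minimum of concave functions), and the AM--GM step gives the majorant property.

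The genuine gap is in the lower-bound direction, exactly at the transport step you flag. The natural extremal decomposition certifying $\widehat{\Phi}_s(\bm X_t;0)=\psi_s(\bm\lambda(\bm X_t))$ consists of matrices $\bm Z_j$, diagonal in the eigenbasis $\bm Q$, whose first $k$ diagonal entries equal $\lambda_i(\bm X)+t$ and whose remaining mass is concentrated, at the common value $\frac{1}{s-k}\sum_{i>k}\left(\bm\lambda(\bm X_t)\right)_i$, on \emph{varying} $(s-k)$-subsets $T_j\subseteq[k+1,n]$ (this is forced: reproducing the coordinates $\lambda_{k+1},\dots,\lambda_n$ of $\bm X_t$ on average requires the $T_j$ to reach indices beyond $s$ whenever $\lambda_{s+1}(\bm X)>0$). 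Consequently a typical $\bm Z_j$ has zero diagonal entries at positions in $[k+1,s]\setminus T_j$, so $\bm X_j:=\bm Z_j-t\,\bm Q_s\bm Q_s^{\top}$ acquires the eigenvalue $-t<0$ and is not in $\S_+^n$; moreover the value-matching $\Phi_s(\bm X_j;t)=\Phi_s(\bm Z_j;0)$ fails because subtracting $t$ on $[s]$ reorders which eigenvalues are the top $s$. So the pinching/majorization heuristic does not rescue the transport. The repair is to abandon the transport and build the certificate for $\Phi_s(\cdot\,;t)$ at $\bm X$ directly: with $\bm y=\bm\lambda(\bm X)+t\mathbb{I}_s$ and $k$ as in \Cref{def:psi}, take $\bm X_j$ diagonal in $\bm Q$ with entries $\lambda_1(\bm X),\dots,\lambda_k(\bm X)$ on $[k]$ and the value $d:=\frac{1}{s-k}\sum_{i>k}y_i-t\ \left(\ge\lambda_{k+1}(\bm X)\ge 0\right)$ on $(s-k)$-subsets of $[k+1,n]$; each such $\bm X_j$ is positive semidefinite with $\Phi_s(\bm X_j;t)=\psi_s(\bm y)$, and Hardy--Littlewood--P\'olya applied to $(\lambda_{k+1}(\bm X),\dots,\lambda_n(\bm X))\prec(d,\dots,d,0,\dots,0)$ produces weights with $\sum_j\theta_j\bm X_j=\bm X$. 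With that substitution your two-inequality architecture closes the argument and is, in fact, more complete than the proof printed in the paper.
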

\begin{proof}
By \Cref{def:phi}, 
we have that $\Phi_s(\bm X; t)= \sum_{i\in [s]} \log\left(\lambda_i(\bm X) +t \right)$ for any matrix $\bm X\in \S_+^n$.  
Suppose that $\bm Q \in \Re^{n\times n}$ are eigenvectors of $\bm X$ corresponding to the eigenvalues $\bm \lambda(\bm X)$. 
It is clear that the eigenvalues of $\bm X + t\bm Q\Diag(\mathbb{I}_s) \bm Q^{\top}$ are $\bm \lambda(\bm X)+ t\mathbb{I}_s$. Then, adding $t\mathbb{I}_s$ does not change the descending order of the entries of ${\bm \lambda}(\bm X)$. By \Cref{def:phi} and the analysis above, we get 
\begin{align*}
  &\Phi_s(\bm X; t)= \sum_{i\in [s]} \log\left(\lambda_i\left(\bm X + t\bm Q\Diag(\mathbb{I}_s) \bm Q^{\top}\right) +0 \right)=\Phi_s\left(\bm X + t\bm Q\Diag(\mathbb{I}_s) \bm Q^{\top}; 0\right)  \\
 \Longrightarrow \ \ &\widehat{\Phi}_s(\bm X; t)=\widehat{\Phi}_s\left(\bm X + t\bm Q\Diag(\mathbb{I}_s) \bm Q^{\top}; 0\right) = \psi_s({\bm \lambda}(\bm X) + t\mathbb{I}_s),
\end{align*}
where the last equation is a result of \cite{nikolov2015randomized}.
\qed
\end{proof}

As a side product,  \Cref{prop:ce} leads to an equivalent concave integer program for \ref{mesp}. 
This paves the way for designing a branch-and-cut algorithm based on (sub)gradient inequalities to solve \ref{mesp} to global optimality (see, e.g., \citealt{li2024best,li2024d}).

\begin{corollary}\label{cor:mesp1}
For any $t$ with $0\le t\le \lambda_{\min}(\bm C)$,
\ref{mesp}  is equivalent to
\begin{align}\label{eq:mesp2}
z^* = \max_{\bm x\in \{0,1\}^n }  \bigg\{ \widehat{\Phi}_s\left(\M_t(\bm x); t\right):  \sum_{i\in [n]} x_i = s\bigg\}. \tag{MESP-I}
\end{align}
\end{corollary}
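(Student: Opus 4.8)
The plan is to sandwich the optimal value of \eqref{eq:mesp2} between two copies of $z^*$, using that the concave envelope $\widehat{\Phi}_s$ overestimates $\Phi_s$ together with a verification that the two functions actually coincide on every feasible binary point. First I would record the trivial direction: by \Cref{prop:ce}, $\widehat{\Phi}_s(\cdot;t)$ is the concave envelope of $\Phi_s(\cdot;t)$ on $\S_+^n$, so $\widehat{\Phi}_s(\bm X;t)\ge \Phi_s(\bm X;t)$ for every $\bm X\in\S_+^n$. Applying this with $\bm X=\M_t(\bm x)\in\S_+^n$ (see \Cref{def:matrix}) and maximizing over the feasible binary set, \Cref{prop:mesp} immediately gives that the optimal value of \eqref{eq:mesp2} is at least $z^*$.

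The heart of the argument is the reverse inequality, for which I would prove that $\widehat{\Phi}_s(\M_t(\bm x);t)=\Phi_s(\M_t(\bm x);t)$ for every $\bm x\in\{0,1\}^n$ with $\sum_{i\in[n]}x_i=s$. Fix such an $\bm x$ with support $S$. By \Cref{def:matrix}, $\M_t(\bm x)=\sum_{i\in S}\bm a_i(t)\left[\bm a_i(t)\right]^{\top}$ is a sum of $s$ rank-one matrices, hence has rank at most $s$, so $\lambda_i(\M_t(\bm x))=0$ for all $i\in[s+1,n]$. Writing $\bm y:=\bm\lambda(\M_t(\bm x))+t\mathbb{I}_s$, the trailing $n-s$ entries of $\bm y$ therefore vanish, and I would feed $\bm y$ into \Cref{def:psi}. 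The key observation is that the tail sum truncates, $\sum_{i\in[k+1,n]}y^{\downarrow}_i=\sum_{i\in[k+1,s]}y^{\downarrow}_i$, so the defining inequality $\frac1{s-k}\sum_{i\in[k+1,n]}y^{\downarrow}_i\ge y^{\downarrow}_{k+1}$ asserts that the average of the sorted block $y^{\downarrow}_{k+1}\ge\cdots\ge y^{\downarrow}_s$ is at least its largest entry. Since an average never exceeds the maximum, this ``average squeeze'' forces $y^{\downarrow}_{k+1}=\cdots=y^{\downarrow}_s$. Substituting these equal entries collapses the logarithm-of-average term in \Cref{def:psi} to $\sum_{i\in[k+1,s]}\log(y^{\downarrow}_i)$, whence $\psi_s(\bm y)=\sum_{i\in[s]}\log(y^{\downarrow}_i)=\sum_{i\in[s]}\log(\lambda_i(\M_t(\bm x))+t)=\Phi_s(\M_t(\bm x);t)$, using \Cref{prop:ce} and \Cref{def:phi}.

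Finally I would close the loop: since $\widehat{\Phi}_s$ and $\Phi_s$ agree on every feasible binary point, a maximizer $\bm x^{\star}$ of \eqref{eq:mesp2} satisfies $\widehat{\Phi}_s(\M_t(\bm x^{\star});t)=\Phi_s(\M_t(\bm x^{\star});t)\le z^*$ by \Cref{prop:mesp}, which yields the reverse inequality and hence equality. The main obstacle is the middle step, and within it the only delicate point is the average-squeeze that pins down $y^{\downarrow}_{k+1}=\cdots=y^{\downarrow}_s$; I would also note the mild bookkeeping that for $t>0$ each of the first $s$ entries of $\bm y$ is at least $t>0$, so all logarithms are finite, while the degenerate rank-deficient case at $t=0$ reduces to the known identity of \cite{nikolov2015randomized,li2024best} with both sides equal.
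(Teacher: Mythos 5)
Your proposal is correct and follows essentially the same route as the paper: both arguments reduce to showing that on a binary feasible point $\M_t(\bm x)$ has rank at most $s$, so the tail sum in the definition of $\psi_s$ truncates and the averaged block $y^{\downarrow}_{k+1},\dots,y^{\downarrow}_s$ must be constant, whence $\psi_s(\bm\lambda+t\mathbb{I}_s)=\sum_{i\in[s]}\log(\lambda_i+t)=\Phi_s$. The only cosmetic difference is that the paper exhibits the index $\ell$ of the constant tail block and verifies $k=\ell$, while you derive the block equality from the defining inequality of $k$ (your ``average squeeze''); the explicit two-sided sandwich is not needed once the two objectives agree on every feasible binary point, but it does no harm.
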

\begin{proof}
 For a binary vector $\bm x\in \{0, 1\}^{n}$ with $\sum_{i\in [n]} x_i =s$,  suppose $\bm X=\M_t(\bm x)=\sum_{i\in [n]} x_i \bm a_i(t)  \left[\bm a_i(t)\right]^{\top}$.  Then, we observe that $\bm X$ is at most rank-$s$, as $\bm a_i(t)  \left[\bm a_i(t)\right]^{\top}$ is a rank-one  matrix for all $i\in [n]$. Hence, $\bm \lambda(\bm X)$ has  only $s$ positive elements, and so does the perturbed vector $\bm \lambda(\bm X)+t\mathbb{I}_s$. 

Let $\bm y = \bm \lambda(\bm X)+t\mathbb{I}_s$. There is an integer $\ell$, $0\le \ell\le s-1$, such that $y_1\ge \cdots \ge y_{\ell}>  y_{\ell+1} =\cdots = y_s > y_{s+1}=\cdots=y_n=0$, with the convention $y_0=\infty$. Then, it is easy to verify that the integer $k$ in \Cref{def:psi} is exactly $\ell$, and  $\psi_s(\bm y)$ reduces to $\sum_{i\in [s]} \log(y_i)$. 

According to \Cref{prop:ce} and the results above, it follows that
\begin{align}\label{eq:phi}
 \widehat{\Phi}_s(\bm X; t)= \psi_s\left(\bm y\right) = \sum_{i\in [s]} \log(y_i)= \sum_{i\in [s]} \log\left(\lambda_i(\bm X) + t\right) = \Phi_s(\bm X; t), 
\end{align} 
which allows us to replace the objective of \eqref{eq_mesp} with $ \widehat{\Phi}_s$. We thus complete the proof.
\qed
\end{proof}
 
  The continuous relaxation of \ref{eq:mesp2} offers a practical upper bound- \ref{eq:upper}. It is worth noting that \ref{eq:upper} meets the Lagrangian dual bound of  \eqref{eq_mesp}. This is because the concave envelope $\widehat{\Phi}_s$ is precisely the bi-conjugate of the objective function $\Phi_s$ of \eqref{eq_mesp}. By duality and concave conjugate theory, the dual of \eqref{eq_mesp} and the continuous relaxation of \ref{eq:mesp2} form a primal-dual pair.

\section{The augmented factorization bound and its properties}\label{subsec:mono}
Relaxing the binary variables in \ref{eq:mesp2} leads to an upper bound:
\begin{align}\label{eq:upper}
z^*\le \hat{z}(t) := \max_{\bm x\in [0,1]^n }  \bigg\{ \widehat{\Phi}_s\left(\M_t(\bm x); t\right):  \sum_{i\in [n]} x_i = s\bigg\}. \tag{Aug-Fact}
\end{align}
For $t=0$, \ref{eq:upper} reduces to the known factorization bound (\ref{eq:fact}), proposed by \cite{nikolov2015randomized}:
\begin{align}\label{eq:fact}
z^* \le \hat{z}(0) := \max_{\bm x\in [0,1]^n }  \bigg\{\widehat{\Phi}_s\left(\M_0(\bm x); 0\right): \sum_{i\in [n]} x_i = s\bigg\}. \tag{Fact}
\end{align}
In this section, we establish that \ref{eq:upper}  decreases monotonically with $t$, $0\le t\le \lambda_{\min}(\bm C)$. We also demonstrate that \ref{eq:upper} is tighter than the two existing upper bounds- \ref{eq:fact} and \ref{eq:ddf}  for any strictly positive $t$ and quantify how much smaller \ref{eq:upper} is compared to them at $t= \lambda_{\min}(\bm C)$. 
Motivated by the strength of \ref{eq:upper}, we propose a variable fixing logic from a primal perspective.

\subsection{The monotonicity of \ref{eq:upper} and its dominance over \ref{eq:fact}}\label{subsec:mont_augfact}
This subsection investigates the monotonicity of \ref{eq:upper} with respect to  $t$, which allows us to establish that for any $t$ with $0<t \le \lambda_{\min}(\bm C)$, \ref{eq:upper} dominates \ref{eq:fact}. We begin by introducing  \textit{Schur-concave} and \textit{strictly Schur-concave} functions, which are critical to proving our results.
\begin{definition}[\citealt{constantine1983schur,law2007effective}]\label{def:schur} A function $f: \Re^n\to \Re$ is Schur-concave if for all $\bm \nu, \bm \mu \in \Re^n$ such that $\bm \mu $ majorizes  $\bm \nu$ (denoted $\bm \mu \succ \bm \nu$), i.e.,
\[\sum_{i\in [\ell]} \mu_i^{\downarrow} \ge \sum_{i\in [\ell]} \nu^{\downarrow}_{[i]}, \forall \ell \in [n-1], \ \ \sum_{i\in [n]} \mu_{i} = \sum_{i\in [n]} \nu_{i}, \]
one has that $f(\bm \mu) \le f(\bm \nu)$. The function $f$ is strictly Schur-concave if the strict inequality $f(\bm \mu) < f(\bm \nu)$ holds for any $\bm \nu, \bm \mu \in \Re^n$, such that  $\bm \mu \succ \bm \nu$ but $\bm \nu$ is not a permutation of $\bm \mu$. %
\end{definition}

Note that every concave and symmetric function is Schur-concave (see \citealt{marshall1979inequalities}). For the concave function $\psi_s$ \citep{nikolov2015randomized}, we observe that it is permutation-invariant with the arguments.
Therefore, 
\begin{observation}\label{remark:schur}
The function $\psi_s$  in \Cref{def:psi} is Schur-concave. %
\end{observation}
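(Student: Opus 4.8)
The plan is to verify the final statement, Observation \ref{remark:schur}, by checking the two hypotheses of the cited result that ``every concave and symmetric function is Schur-concave.'' Thus I would reduce the claim to two facts about $\psi_s$: that it is concave, and that it is symmetric (permutation-invariant). Both are essentially already in hand from the earlier material, so this is a short verification rather than a substantive argument.

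First I would recall that the concavity of $\psi_s$ on $\Re_+^n$ is a result of \cite{nikolov2015randomized}, already invoked in the excerpt immediately after \Cref{def:psi} and used again in the proof of \Cref{prop:ce}; hence I may assume it. Second, I would argue that $\psi_s$ is symmetric, i.e., $\psi_s(\bm y) = \psi_s(\bm P \bm y)$ for every permutation matrix $\bm P$. This is transparent from \Cref{def:psi}: the definition of $\psi_s(\bm y)$ depends on $\bm y$ only through its sorted entries $y^{\downarrow}_1 \ge \cdots \ge y^{\downarrow}_n$ (and the derived index $k$, which is itself determined by the sorted entries), and permuting the components of $\bm y$ leaves the sorted vector $\bm y^{\downarrow}$ unchanged. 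Therefore $\psi_s$ is invariant under permutation of its arguments, exactly the ``permutation-invariant'' observation stated in the text.

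Having established concavity and symmetry, I would invoke \cite{marshall1979inequalities} (cited in the excerpt) to conclude Schur-concavity directly: for any $\bm \mu \succ \bm \nu$, a concave symmetric function satisfies $\psi_s(\bm \mu) \le \psi_s(\bm \nu)$, which is precisely \Cref{def:schur}. For completeness one can sketch why the cited implication holds: majorization $\bm \mu \succ \bm \nu$ means $\bm \nu$ lies in the convex hull of the permutations of $\bm \mu$ (the Hardy--Littlewood--P\'olya/Birkhoff characterization), so $\bm \nu = \sum_j \theta_j \bm P_j \bm \mu$ for some convex weights $\theta_j$ and permutation matrices $\bm P_j$; then concavity gives $\psi_s(\bm \nu) \ge \sum_j \theta_j \psi_s(\bm P_j \bm \mu)$, and symmetry gives $\psi_s(\bm P_j \bm \mu) = \psi_s(\bm \mu)$, whence $\psi_s(\bm \nu) \ge \psi_s(\bm \mu)$.

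I do not anticipate a genuine obstacle here, since the statement is a direct corollary of a standard theorem whose two hypotheses are already available. The only point requiring a modicum of care is the domain: \Cref{def:schur} states Schur-concavity for functions on $\Re^n$, whereas $\psi_s$ is defined on $\Re_+^n$, so I would note that the majorization comparisons and the conclusion are intended over the nonnegative orthant (the relevant domain for eigenvalue vectors $\bm \lambda(\bm X) + t\mathbb{I}_s$ in the applications), where both concavity and symmetry of $\psi_s$ hold. With that caveat, the observation follows immediately.
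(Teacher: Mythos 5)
Your proposal is correct and follows essentially the same route as the paper: the text immediately preceding \Cref{remark:schur} establishes the observation by combining the concavity of $\psi_s$ from \cite{nikolov2015randomized} with its permutation-invariance (evident from \Cref{def:psi} depending only on the sorted entries) and the standard fact from \cite{marshall1979inequalities} that concave symmetric functions are Schur-concave. Your additional sketch of the Birkhoff/Hardy--Littlewood--P\'olya argument and the remark on restricting to $\Re_+^n$ are fine but not needed beyond what the paper already does.
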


For a matrix $\bm X\in \S_+^n$, by \Cref{prop:ce}, the objective function of \ref{eq:upper} can be represented by $\psi_s$ based on the perturbed eigenvalue vector $\bm \lambda(\bm X)+ t\mathbb{I}_s$. 
The following lemma presents several technical results about the perturbed eigenvalue vector across different $t$. To be specific, Part (i) of \Cref{lem:major}, together with \Cref{remark:schur},  enables us to develop the monotonicity of \ref{eq:upper} in \Cref{them:mont}. Parts (ii) and (iii) facilitate the derivation of a theoretical guarantee for the difference $\hat{z}(0)-\hat{z}(\lambda_{\min}(\bm C))$ in the next subsection.
\begin{lemma}\label{lem:major}
Given a vector $\bm x \in [0,1]^n$ satisfying $\sum_{i\in [n]}x_i=s$, for all $t_1, t_2$ such that $0\le t_1 \le t_2 \le \lambda_{\min}(\bm C)$, suppose that $\bm \nu^{t_1}$ and $\bm \mu^{t_2}$ are the eigenvalues of $\M_{t_1}(\bm x)$ and $\M_{t_2}(\bm x)$, respectively, sorted in nonincreasing order. Then, the following hold:
\begin{enumerate}
    \item[(i)] $ \bm \mu^{t_2} + t_2 \mathbb{I}_s \succ \bm \nu^{t_1} + t_1 \mathbb{I}_s$; 
\item[(ii)] $\sum_{i\in [\ell]}\mu_i^{t_2} + (t_2-t_1) \sum_{i\in [\ell]} x_i^{\downarrow}\ge \sum_{i\in [\ell]}\nu_i^{t_1}$ for each $\ell\in [s]$; and
\item[(iii)] $\mu_i^{t_2} + t_2-t_1 \ge \nu_i^{t_1} $ for each $i\in [s]$.
\end{enumerate}
\end{lemma}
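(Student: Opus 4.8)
The plan is to reduce all three claims to standard eigenvalue perturbation inequalities by passing to a matrix in which the dependence on $t$ is \emph{linear}. The key observation is that, writing $\bm V(t) := \bm A(t)\Diag(\sqrt{\bm x})$, we have $\M_t(\bm x) = \bm V(t)\bm V(t)^\top$, whereas
\[
\bm V(t)^\top \bm V(t) = \Diag(\sqrt{\bm x})\,[\bm A(t)]^\top\bm A(t)\,\Diag(\sqrt{\bm x}) = \Diag(\sqrt{\bm x})\,(\bm C - t\bm I)\,\Diag(\sqrt{\bm x}) =: \bm B(t).
\]
Since $\bm V(t)\bm V(t)^\top$ and $\bm V(t)^\top\bm V(t)$ share the same spectrum, $\bm \nu^{t_1}$ and $\bm \mu^{t_2}$ are exactly the sorted eigenvalues of $\bm B(t_1)$ and $\bm B(t_2)$. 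The payoff is that $\bm B(t_2) = \bm B(t_1) - (t_2-t_1)\Diag(\bm x)$, a perturbation of $\bm B(t_1)$ by the negative semidefinite matrix $-(t_2-t_1)\Diag(\bm x)$ whose eigenvalues, sorted nonincreasingly, are $-(t_2-t_1)x_i^\downarrow$.

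With this reduction the engine is Ky Fan's maximum principle: for symmetric $\bm M,\bm N$ one has $\sum_{i\in[\ell]}\lambda_i(\bm M+\bm N)\le \sum_{i\in[\ell]}\lambda_i(\bm M)+\sum_{i\in[\ell]}\lambda_i(\bm N)$. Applying it with $\bm M=\bm B(t_2)$ and $\bm N=(t_2-t_1)\Diag(\bm x)$, so that $\bm M+\bm N=\bm B(t_1)$, yields, for every $\ell\in[n]$,
\[
\sum_{i\in[\ell]}\mu_i^{t_2}\ \ge\ \sum_{i\in[\ell]}\nu_i^{t_1}-(t_2-t_1)\sum_{i\in[\ell]}x_i^\downarrow,
\]
which is exactly Part (ii) restricted to $\ell\in[s]$. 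Part (iii) is the one-eigenvalue refinement given by Weyl's inequality $\lambda_i(\bm B(t_1)+\bm P)\ge \lambda_i(\bm B(t_1))+\lambda_n(\bm P)$ with $\bm P=-(t_2-t_1)\Diag(\bm x)$: since $\lambda_n(\bm P)=-(t_2-t_1)\max_j x_j\ge-(t_2-t_1)$ (using $x_j\le1$), we get $\mu_i^{t_2}\ge \nu_i^{t_1}-(t_2-t_1)$, i.e.\ $\mu_i^{t_2}+t_2-t_1\ge \nu_i^{t_1}$.

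For Part (i) I would first note that adding $t_2\mathbb{I}_s$ to the already-sorted $\bm \mu^{t_2}$ preserves the nonincreasing order, because $\mu_s^{t_2}+t_2\ge \mu_{s+1}^{t_2}$, and likewise for $\bm\nu^{t_1}+t_1\mathbb{I}_s$; hence no re-sorting is needed and the majorization partial sums are the plain partial sums. The equal-total-sum requirement follows from the trace identity $\tr\bm B(t)=\sum_i x_i C_{ii}-ts$, which gives $\sum_{i\in[n]}\mu_i^{t_2}+s t_2=\sum_i x_i C_{ii}=\sum_{i\in[n]}\nu_i^{t_1}+s t_1$. For the partial-sum dominance I split on $\ell$: for $\ell\in[s]$, combine the displayed Ky Fan bound with $\sum_{i\in[\ell]}x_i^\downarrow\le\ell$ (each $x_i\le1$) to obtain $\sum_{i\in[\ell]}\mu_i^{t_2}+\ell t_2\ge \sum_{i\in[\ell]}\nu_i^{t_1}+\ell t_1$; for $\ell\in[s+1,n-1]$, the shift contributes only $s t_2$ and $s t_1$ to the two sides, and I combine Ky Fan with $\sum_{i\in[\ell]}x_i^\downarrow\le\sum_{i\in[n]}x_i=s$. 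Collecting these yields $\bm\mu^{t_2}+t_2\mathbb{I}_s\succ\bm\nu^{t_1}+t_1\mathbb{I}_s$.

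The only genuine obstacle is the first step: recognizing that the nonlinear-in-$t$ Cholesky factorization can be sidestepped by the $\bm V\bm V^\top/\bm V^\top\bm V$ spectral identity, which converts the comparison into a single additive perturbation $-(t_2-t_1)\Diag(\bm x)$ of $\bm B(t_1)$. Once this is in place, Parts (ii) and (iii) are immediate from Ky Fan and Weyl, and Part (i) is bookkeeping—tracking which coordinates receive the $+t\mathbb{I}_s$ shift, handling the two ranges of $\ell$, and invoking the elementary bounds $\sum_{i\in[\ell]}x_i^\downarrow\le\min(\ell,s)$ together with the trace equality.
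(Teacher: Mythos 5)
Your proposal is correct and follows essentially the same route as the paper's proof: the $\bm V\bm V^{\top}/\bm V^{\top}\bm V$ spectral identity to reduce to the linear perturbation $\bm B(t_1)=\bm B(t_2)+(t_2-t_1)\Diag(\bm x)$, then Ky Fan for Parts (i)--(ii) with the bounds $\sum_{i\in[\ell]}x_i^{\downarrow}\le\min\{\ell,s\}$ and the trace equality, and Weyl for Part (iii). Your explicit check that adding $t\mathbb{I}_s$ preserves the nonincreasing order is a small point the paper leaves implicit.
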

\begin{proof}
The proof of Part (i) is two-step: analyzing the properties of the eigenvalue vectors $\bm \nu^{t_1}, \bm \mu^{t_2}$ and exploring the relation between $\bm \nu^{t_1} + t_1\mathbb{I}_s$ and $\bm \mu^{t_2} +t_2\mathbb{I}_s$, respectively.

\noindent \textbf{Step 1.} For any $t$, let $\bm V=\bm A(t) \Diag(\sqrt{{\bm x}})$. We have that $\Diag(\sqrt{{\bm x}}) \bm A(t)^{\top} \bm A(t) \Diag(\sqrt{{\bm x}}) = \bm V^{\top}\bm V$. On the other hand, it is easy to check that 
$\M_t(\bm x) =\sum_{i\in [n]} x_i \bm a_i(t)  \left[\bm a_i(t)\right]^{\top} = \bm A(t) \Diag(\bm x) \bm A(t)^{\top} = \bm V\bm V^{\top}$. For a matrix $\bm V \in \Re^{n\times n}$, it is known that $\bm V^{\top}\bm V$ and $\bm V\bm V^{\top}$  have the same eigenvalues.  Hence,
the vectors $\bm \nu^{t_1}$ and $\bm \mu^{t_2}$ precisely contain all eigenvalues in the nonincreasing order of the matrices $\Diag(\sqrt{{\bm x}}) \bm A(t_1)^{\top} \bm A(t_1) \Diag(\sqrt{{\bm x}})$ and $\Diag(\sqrt{{\bm x}}) \bm A(t_2)^{\top} \bm A(t_2) \Diag(\sqrt{{\bm x}})$, respectively.

Let $\bm B := \Diag(\sqrt{{\bm x}}) \bm A(t_2)^{\top} \bm A(t_2) \Diag(\sqrt{{\bm x}})$. By  \eqref{eq_decom}, we can get 
\[ \bm C = \bm A(t_1)^{\top} \bm A(t_1) + t_1\bm I = \bm A(t_2)^{\top} \bm A(t_1)+ t_2\bm I \ \ \Longrightarrow \ \ \bm A(t_1)^{\top} \bm A(t_1)  = \bm A(t_2)^{\top} \bm A(t_2)+ (t_2-t_1)\bm I.\]
Multiplying both sides above by $\Diag(\sqrt{{\bm x}})$ gives
\begin{align*}
\Diag(\sqrt{{\bm x}}) \bm A(t_1)^{\top} \bm A(t_1) \Diag(\sqrt{{\bm x}})  = \bm B + (t_2-t_1) \Diag(\bm x),
\end{align*}
which means that  
$\bm \nu^{t_1}$ is the eigenvalue vector of $\bm B + (t_2-t_1) \Diag(\bm x)$.

\noindent\textbf{Step 2.} 
By Ky Fan inequality, for each $\ell \in [n]$, we have that
\begin{equation}\label{eq:kf}
  \begin{aligned}
\sum_{i\in [\ell]} \nu^{t_1}_i &= \sum_{i\in [\ell]} \lambda_i\left(\bm B + (t_2-t_1) \Diag(\bm x)\right) \le \sum_{i\in [\ell]} \lambda_i (\bm B) + (t_2-t_1)\sum_{i\in [\ell]}\lambda_{i} \left( \Diag(\bm x)\right) \\&= \sum_{i\in [\ell]} \lambda_i (\bm B) + (t_2-t_1)\sum_{i\in [\ell]} x^{\downarrow}_i = \sum_{i\in [\ell]} \mu_i^{t_2} + (t_2-t_1)\sum_{i\in [\ell]} x^{\downarrow}_i, 
\end{aligned}  
\end{equation}
where the second equation is because the matrix $\Diag(\bm x)$ is diagonal and its eigenvalues are exactly $\bm x$.

As $\bm x \in [0,1]^n$ and $\sum_{i\in [n]}x_i=s$, we have that $ \sum_{i\in [\ell]} x_{i}^{\downarrow}  \le \min\{\ell, s\}$ for all $\ell\in [n-1]$,
which allows us to further reduce  \eqref{eq:kf} to
\begin{align*}
    \sum_{i\in [\ell]} \nu^{t_1}_i \le     \sum_{i\in [\ell]} \mu^{t_1}_i +(t_2-t_1)\ell, \ \ \forall \ell \in [s], \quad    \sum_{i\in [\ell]} \nu^{t_1}_i \le     \sum_{i\in [\ell]} \mu^{t_1}_i +(t_2-t_1)s, \ \ \forall \ell \in [s+1, n-1],
\end{align*}
and
\begin{align*}
    \sum_{i\in [n]} \nu^{t_1}_i  = \tr\left(\bm B + (t_2-t_1) \Diag(\bm x)\right) = \tr\left( \bm B\right) + (t_2-t_1)s =  \sum_{i\in [n]} \mu^{t_2}_i + (t_2-t_1)s. 
\end{align*}
Hence, we obtain that $\bm \mu^{t_2} + t_2 \mathbb{I}_s$ majorizes $\bm \nu^{t_1} + t_1 \mathbb{I}_s$, i.e., $\bm \mu^{t_2} + t_2 \mathbb{I}_s\succ \bm \nu^{t_1} + t_1 \mathbb{I}_s$.

Part (ii) follows immediately from \eqref{eq:kf}.  

Based on Step 1 of Part (i), we can leverage Weyl's inequality to show Part (iii):
\begin{align*}
\lambda_i (\bm B)\le \lambda_i(\bm B + (t_2-t_1) \Diag(\bm x)) \le  \lambda_i (\bm B) + t_2-t_1  \Longrightarrow \nu^{t_1}_i \le \mu^{t_2}_i + t_2-t_1, \ \ \forall i\in [s].  
\end{align*} 
where the second inequality is due to the fact $\bm x\in [0,1]^n$.
We thus complete the proof. \qed
\end{proof}

\begin{theorem} \label{them:mont}
     \ref{eq:upper}  is monotonically decreasing  with $t$, $0\le t \le \lambda_{\min}(\bm C)$. That is, for all $t_1, t_2$ such that $0\le t_1\le t_2 \le \lambda_{\min}(\bm C)$, the inequality $\hat{z}(t_1)\ge \hat{z}(t_2)$ holds.
\end{theorem}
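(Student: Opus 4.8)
The plan is to prove the pointwise domination $\widehat{\Phi}_s(\M_{t_2}(\bm x); t_2) \le \widehat{\Phi}_s(\M_{t_1}(\bm x); t_1)$ for every fixed feasible $\bm x$, and then lift it to the two optimal values by a routine maximization argument. The essential ingredients---the majorization relation of \Cref{lem:major}(i) and the Schur-concavity of $\psi_s$ from \Cref{remark:schur}---are already in place, so the argument is short.

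First I would fix $t_1, t_2$ with $0 \le t_1 \le t_2 \le \lambda_{\min}(\bm C)$ and an arbitrary feasible point $\bm x \in [0,1]^n$ with $\sum_{i\in [n]} x_i = s$. Let $\bm \nu^{t_1}$ and $\bm \mu^{t_2}$ be the nonincreasingly sorted eigenvalues of $\M_{t_1}(\bm x)$ and $\M_{t_2}(\bm x)$, exactly as in \Cref{lem:major}. By \Cref{prop:ce}, the two objective values can be written as $\widehat{\Phi}_s(\M_{t_j}(\bm x); t_j) = \psi_s(\bm \lambda(\M_{t_j}(\bm x)) + t_j \mathbb{I}_s)$, i.e., as $\psi_s(\bm \nu^{t_1} + t_1 \mathbb{I}_s)$ and $\psi_s(\bm \mu^{t_2} + t_2 \mathbb{I}_s)$, respectively.

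Next I would invoke \Cref{lem:major}(i), which yields $\bm \mu^{t_2} + t_2 \mathbb{I}_s \succ \bm \nu^{t_1} + t_1 \mathbb{I}_s$. Since $\psi_s$ is Schur-concave by \Cref{remark:schur}, majorization reverses the value of $\psi_s$, giving $\psi_s(\bm \mu^{t_2} + t_2 \mathbb{I}_s) \le \psi_s(\bm \nu^{t_1} + t_1 \mathbb{I}_s)$, that is, $\widehat{\Phi}_s(\M_{t_2}(\bm x); t_2) \le \widehat{\Phi}_s(\M_{t_1}(\bm x); t_1)$. Because $\bm x$ was arbitrary, this pointwise inequality holds across the entire feasible set.

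Finally, I would pass to the optima. Crucially, the feasible region $\{\bm x \in [0,1]^n : \sum_{i\in[n]} x_i = s\}$ does not depend on $t$. Letting $\bm x^\star$ attain $\hat{z}(t_2)$, feasibility of $\bm x^\star$ for the $t_1$-problem together with the pointwise inequality gives $\hat{z}(t_2) = \widehat{\Phi}_s(\M_{t_2}(\bm x^\star); t_2) \le \widehat{\Phi}_s(\M_{t_1}(\bm x^\star); t_1) \le \hat{z}(t_1)$, which is the claim. I do not expect a genuine obstacle here: all of the analytic work was absorbed into \Cref{lem:major}, and the only points to state carefully are that \Cref{prop:ce} is what licenses replacing $\widehat{\Phi}_s$ by $\psi_s$ of the perturbed spectrum, and that the $t$-independence of the constraint set makes the max-to-max comparison immediate.
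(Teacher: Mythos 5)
Your proposal is correct and follows essentially the same route as the paper's proof: both establish the pointwise inequality $\widehat{\Phi}_s(\M_{t_2}(\bm x); t_2) \le \widehat{\Phi}_s(\M_{t_1}(\bm x); t_1)$ for each feasible $\bm x$ by combining the majorization from \Cref{lem:major}(i) with the Schur-concavity of $\psi_s$ via \Cref{prop:ce}, and then pass to the optima over the $t$-independent feasible set. Your write-up is, if anything, slightly more explicit than the paper's in spelling out the final max-to-max step.
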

\begin{proof}
In order to prove the result, we show that for any solution $\bm x\in[0,1]^n$ with $\sum_{i\in [n]}x_i=s$,  the objective function $\widehat{\Phi}_s(\M_t(\bm x); t)$ decreases as $t$ increases. 
Suppose that $\bm \nu^{t_1}$ and $\bm \mu^{t_2}$ are the eigenvalues of $\M_{t_1}(\bm x)$ and $\M_{t_2}(\bm x)$, respectively, sorted in nonincreasing order.  By Part (i) of \Cref{lem:major}, we get
$
\bm \mu^{t_2} + t_2 \mathbb{I}_s \succ    \bm \nu^{t_1}  + t_1 \mathbb{I}_s
$.
According to the Schur-concavity of $\psi_s$ in \Cref{remark:schur}, we have that
$
  \psi_s\left( \bm \nu^{t_1} + t_1 \mathbb{I}_s\right) \ge \psi_s\left(\bm \mu^{t_2} + t_2 \mathbb{I}_s\right)
$. According to \Cref{prop:ce}, the inequality implies that
\begin{align*}
\widehat{\Phi}_s \left(\M_{t_1}(\bm x); t_1\right) \ge  \widehat{\Phi}_s \left(\M_{t_2}(\bm x); t_2\right).
\end{align*}
Thus, it is clear that $\hat{z}(t_1)\ge \hat{z}(t_2)$ holds at optimality.
\qed
\end{proof}

By leveraging the Schur-concavity of the function $\psi_s$, a property not previously explored in the literature, \Cref{them:mont} demonstrates the monotonicity of \ref{eq:upper} over $t$.
To explore this monotonicity, we consider a general $t$, $0\le t \le \lambda_{\min}(\bm C)$ rather than directly setting $t=\lambda_{\min}(\bm C)$ at the beginning.
We also note that

\begin{enumerate}
    \item[(i)]  As  the known \ref{eq:fact} bound is a special case of \ref{eq:upper} at $t=0$, a notable side product of \Cref{them:mont} is that \ref{eq:upper} dominates \ref{eq:fact} whenever $t>0$, as summarized in \Cref{cor:dom}. Our numerical results verify the superior performance of \ref{eq:upper}. In addition, \ref{eq:upper}
  maintains a similar computational efficiency with \ref{eq:fact}, since both objectives are formulated by the concave function $\psi_s$; and
\item[(ii)] 
The proof of \Cref{them:mont} also sheds light on how the objective of \ref{eq:upper} varies with $t$. 
\Cref{cor:mesp1} indicates that the function ${\Phi}_s(\M_{t}(\bm x); t)$ meets its concave envelope $\widehat{\Phi}_s(\M_{t}(\bm x); t)$ if $\bm x$ is a binary solution to \ref{eq:mesp2} (see equation \eqref{eq:phi}). By \Cref{prop:mesp}, the function  ${\Phi}_s(\M_{t}(\bm x); t)$ is invariant under $t$ given a  binary solution $\bm x$, and so is $\widehat{\Phi}_s(\M_{t}(\bm x); t)$.
Interestingly, when  $\bm x$ is not binary, the invariance may not hold, since $\widehat{\Phi}_s(\M_{t}(\bm x); t)$  becomes monotonically decreasing  with $t$.
\end{enumerate}

\begin{theorem}\label{cor:dom}
For any  $t$ with $0< t\le \lambda_{\min}(\bm C)$,   \ref{eq:upper} dominates \ref{eq:fact}, i.e., $\hat{z}(0)\ge \hat{z}(t)$. 
\end{theorem}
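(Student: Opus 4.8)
The plan is to derive this as an immediate specialization of the monotonicity result in \Cref{them:mont}. Recall that \ref{eq:fact} is by definition exactly \ref{eq:upper} evaluated at $t=0$, so $\hat{z}(0)$ is the Fact bound and $\hat{z}(t)$ is the Aug-Fact bound for the chosen $t$. Since the hypothesis $0 < t \le \lambda_{\min}(\bm C)$ in particular gives $0 \le t$, I would invoke \Cref{them:mont} with $t_1 = 0$ and $t_2 = t$, which yields $\hat{z}(0) \ge \hat{z}(t)$ at once. This is essentially the entire content of the claim; there is no genuine obstacle, because all the real work — comparing the two concave relaxations termwise — has already been carried out in establishing the monotonicity.

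For completeness I would also sketch a self-contained pointwise argument that reuses the same ingredients without merely quoting the theorem statement. Fix any feasible $\bm x \in [0,1]^n$ with $\sum_{i\in[n]} x_i = s$, and let $\bm \nu$ and $\bm \mu$ denote the nonincreasingly sorted eigenvalue vectors of $\M_0(\bm x)$ and $\M_t(\bm x)$, respectively. Applying Part (i) of \Cref{lem:major} with $t_1 = 0$ and $t_2 = t$ gives $\bm \mu + t\mathbb{I}_s \succ \bm \nu$. By \Cref{prop:ce}, the objective of \ref{eq:upper} at this $\bm x$ equals $\psi_s(\bm \mu + t\mathbb{I}_s)$, while the objective of \ref{eq:fact} equals $\psi_s(\bm \nu)$. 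Invoking the Schur-concavity of $\psi_s$ from \Cref{remark:schur} then yields $\psi_s(\bm \nu) \ge \psi_s(\bm \mu + t\mathbb{I}_s)$, i.e., the \ref{eq:fact} objective dominates the \ref{eq:upper} objective at every feasible point. Taking the maximum over $\bm x$ on both sides preserves the inequality and delivers $\hat{z}(0) \ge \hat{z}(t)$.

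Because the argument is a direct specialization, the only thing worth flagging is bookkeeping rather than difficulty: I would confirm that the feasible region $\{\bm x \in [0,1]^n : \sum_{i\in[n]} x_i = s\}$ is identical for both $\hat{z}(0)$ and $\hat{z}(t)$ — it is, being independent of $t$ — so that the pointwise domination of objectives transfers cleanly to the optimal values. No strictness is asserted in this statement, so I would not need the strict Schur-concavity machinery from \Cref{def:schur}; that refinement is presumably reserved for the later quantitative comparisons of $\hat{z}(0)$ and $\hat{z}(\lambda_{\min}(\bm C))$.
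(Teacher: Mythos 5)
Your proposal is correct and matches the paper exactly: the paper presents \Cref{cor:dom} as an immediate consequence of \Cref{them:mont} applied with $t_1=0$ and $t_2=t$, noting that \ref{eq:fact} is the $t=0$ instance of \ref{eq:upper}, which is precisely your first paragraph. Your self-contained sketch is just the proof of \Cref{them:mont} specialized to $t_1=0$, so it introduces nothing beyond the paper's argument and is likewise sound.
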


\subsection{Theoretical guarantees for the improvement of \ref{eq:upper} over \ref{eq:fact}} \label{subsec:fact}
This subsection aims to quantify the effect of $t$ on \ref{eq:upper}. 
By leveraging \Cref{lem:major} and the concavity of the function $\psi_s$,
we establish a lower bound for the difference $\hat{z}(0)-\hat{z}(\lambda_{\min}(\bm C))$ and propose a sufficient condition where \ref{eq:upper} strictly improves \ref{eq:fact} at $t=\lambda_{\min}(\bm C)$. This lower bound also contributes to enhancing the theoretical performance guarantees of the local search and sampling algorithms for \ref{mesp}.

\begin{theorem}\label{them:bound}
Suppose that $\bm x^*$ is an  optimal solution of \ref{eq:upper} at $t=\lambda_{\min}(\bm C)$ and $\bm \beta^*$ is the vector of eigenvalues of $\M_0(\bm x^*)$ in nonincreasing order.  Then, the following hold:
\begin{enumerate}
 \item[(i)] Let $(x^*)^{\downarrow}_0=0$ by default. We have that
 \begin{align*}
 \hat{z}(0)-  \hat{z}\left(\lambda_{\min}(\bm C)\right) \ge \Delta^{lb} :=\lambda_{\min}(\bm C) \bigg( k-\sum_{i\in [k]}(x^*)^{\downarrow}_i \bigg)\bigg(
\frac{s-k}{\sum_{i\in[k+1,n]}\beta^*_{i}}- \frac{1}{\beta^*_k}\bigg)\ge 0;
 \end{align*}
 and
 \item[(ii)] \ref{eq:upper} with $t=\lambda_{\min}(\bm C)$ strictly dominates \ref{eq:fact} if $k\ge 1$ and $(x^*)^{\downarrow}_k < 1$, 
\end{enumerate}
where $0\le k \le s-1$ is an integer, such that $\beta^*_{k} > \frac{1}{s-k} \sum_{i\in[k+1,n]} \beta^*_{i} \ge \beta^*_{k+1}$ with $\beta^*_0=\infty$.
\end{theorem}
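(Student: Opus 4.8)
The plan is to bound the gap $\hat{z}(0)-\hat{z}(\lambda_{\min}(\bm C))$ by first reducing it to a single difference of $\psi_s$-values and then linearizing $\psi_s$ with the explicit supergradient of \Cref{remark:grad}. Write $\lambda=\lambda_{\min}(\bm C)$ and let $\bm\mu$ and $\bm\beta^*$ be the nonincreasingly sorted eigenvalues of $\M_\lambda(\bm x^*)$ and $\M_0(\bm x^*)$, respectively. By \Cref{prop:ce} and optimality of $\bm x^*$ at $t=\lambda$, we have $\hat{z}(\lambda)=\psi_s(\bm\mu+\lambda\mathbb{I}_s)$, while feasibility of $\bm x^*$ at $t=0$ yields $\hat{z}(0)\ge\widehat{\Phi}_s(\M_0(\bm x^*);0)=\psi_s(\bm\beta^*)$. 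Hence $\hat{z}(0)-\hat{z}(\lambda)\ge\psi_s(\bm\beta^*)-\psi_s(\bm\mu+\lambda\mathbb{I}_s)$, and everything reduces to lower-bounding this right-hand side by $\Delta^{lb}$.

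Next I would invoke the concavity of $\psi_s$. Taking $\bm g$ to be the supergradient of $\psi_s$ at $\bm\beta^*$ given in \Cref{remark:grad} (legitimate, since the integer attached to $\bm\beta^*$ in \Cref{def:psi} is exactly the $k$ in the statement), concavity gives $\psi_s(\bm\beta^*)-\psi_s(\bm\mu+\lambda\mathbb{I}_s)\ge\langle\bm g,\bm w\rangle$, where $\bm w:=\bm\beta^*-\bm\mu-\lambda\mathbb{I}_s$. The decisive simplification comes from the trace identity already established in Step~2 of the proof of \Cref{lem:major}, namely $\sum_{i\in[n]}\beta^*_i=\sum_{i\in[n]}\mu_i+\lambda s$, which forces $\sum_{i\in[n]}w_i=0$. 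Because $g_i$ equals the constant $c:=\tfrac{s-k}{\sum_{i\in[k+1,n]}\beta^*_i}$ on the entire tail $[k+1,n]$, the tail part of $\langle\bm g,\bm w\rangle$ collapses as $c\sum_{i\in[k+1,n]}w_i=-c\sum_{i\in[k]}w_i$, so that $\langle\bm g,\bm w\rangle=\sum_{i\in[k]}\bigl(\tfrac{1}{\beta^*_i}-c\bigr)w_i$.

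Finally I would estimate this sum termwise. The defining inequality $\beta^*_k>\tfrac{1}{c}$ of \Cref{def:psi} together with $\beta^*_i\ge\beta^*_k$ for $i\in[k]$ gives $c-\tfrac{1}{\beta^*_i}\ge c-\tfrac{1}{\beta^*_k}\ge0$, while Part~(iii) of \Cref{lem:major} gives $w_i=\beta^*_i-\mu_i-\lambda\le0$. Thus each term $\bigl(\tfrac{1}{\beta^*_i}-c\bigr)w_i=\bigl(c-\tfrac{1}{\beta^*_i}\bigr)|w_i|$ is nonnegative and at least $\bigl(c-\tfrac{1}{\beta^*_k}\bigr)|w_i|$; summing over $i\in[k]$ and using Part~(ii) of \Cref{lem:major} at $\ell=k$, which rearranges to $\sum_{i\in[k]}|w_i|\ge\lambda\bigl(k-\sum_{i\in[k]}(x^*)^{\downarrow}_i\bigr)$, produces exactly $\langle\bm g,\bm w\rangle\ge\lambda\bigl(k-\sum_{i\in[k]}(x^*)^{\downarrow}_i\bigr)\bigl(c-\tfrac{1}{\beta^*_k}\bigr)=\Delta^{lb}$. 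Both factors being nonnegative gives $\Delta^{lb}\ge0$ and Part~(i). For Part~(ii) I would note that $\lambda>0$ by positive definiteness, that $c-\tfrac{1}{\beta^*_k}>0$ strictly once $k\ge1$ (the strict inequality in \Cref{def:psi}), and that $k-\sum_{i\in[k]}(x^*)^{\downarrow}_i>0$ precisely when $(x^*)^{\downarrow}_k<1$, whence $\Delta^{lb}>0$ under the stated hypotheses.

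The hard part is the middle step: recognizing that the total mass of $\bm w$ vanishes through the trace identity, which is what lets the supergradient inner product collapse onto the first $k$ coordinates, and then controlling the two factors in the same direction at once. Both the coefficients $c-1/\beta^*_i$ and the magnitudes $|w_i|$ must be bounded with matching signs, and it is precisely this coordinated, otherwise loose, termwise estimate that telescopes down to the clean closed form $\Delta^{lb}$ rather than to an unwieldy expression.
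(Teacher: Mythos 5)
Your proposal is correct and follows essentially the same route as the paper's proof: reduce the gap to $\psi_s(\bm\beta^*)-\psi_s(\bm\mu+\lambda\mathbb{I}_s)$ via optimality/feasibility of $\bm x^*$, linearize with the supergradient of $\psi_s$ at $\bm\beta^*$, collapse the tail using the trace identity $\sum_i w_i=0$, and invoke Parts (ii)--(iii) of \Cref{lem:major} to control the first $k$ coordinates. The only difference is the sign convention ($\bm w=-\bm\theta^*$) and that you bound the sum termwise rather than by grouping, which yields the identical constant $\Delta^{lb}$.
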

\begin{proof}
Our proof contains two parts.

\begin{enumerate}
    \item[(i)] To begin,
we define $\bm \lambda^*$ to be the eigenvalue vector of $\M_{\lambda_{\min}(\bm C)}({\bm x^*})$. In this way, $\bm \beta^*$ and $\bm \lambda^*$ are a pair of eigenvalues vectors obtained from  $\bm x^*$  at $t=0$ and $t=\lambda_{\min}(\bm C)$, respectively.

Let $\bm \theta^* := \bm \lambda^* + \lambda_{\min}(\bm C)\mathbb{I}_s -\bm \beta^*$. Given the solution $\bm x^*$, by leveraging \Cref{lem:major} in which we set $t_1=0$, $t_2= \lambda_{\min}(\bm C)$ and $\bm \nu^{t_1}=\bm \beta^*$, $\bm \mu^{t_2}=\bm \lambda^*$,  we have that 
\begin{align}\label{ineq}
& \sum_{i\in [n]}\theta^*_i = 0, \quad
 \sum_{i\in [\ell]} \theta^*_i \ge \lambda_{\min}(\bm C) \bigg(\ell-\sum_{i\in [\ell]}(x^*)_{i}^{\downarrow}\bigg), \ \  \forall \ell \in [s], \ \  \text{and} \ \ \theta^*_i\ge 0, \ \ \forall i\in [s].
\end{align}

According to \Cref{prop:ce} and the concavity of $\psi_s$, we have that
\begin{align*}
\hat{z}(\lambda_{\min}(\bm C)) - \hat{z}(0)
&\le \widehat{\Phi}_s (\M_{\lambda_{\min}(\bm C)}(\bm x^*); \lambda_{\min}(\bm C))  - \widehat{\Phi}_s (\M_{0}(\bm x^*); 0)=  \psi_s(\bm \lambda^* +  \lambda_{\min}(\bm C) \mathbb{I}_s) - \psi_s(\bm \beta^*) \\
&\le  \bm g^{\top}(\bm \lambda^*+ \lambda_{\min}(\bm C)\mathbb{I}_s-\bm \beta^*) = \bm g^{\top}\bm \theta^*, 
\end{align*}
where  the first inequality is because   $\bm x^*$ may not be optimal for \ref{eq:fact} and $\bm g \in \Re^n_+ $ is a subgradient of the function $\psi_s$ at $\bm \beta^*$, as defined in \Cref{remark:grad}. Specifically,  $g_i={1}/{\beta^*_i}$ for all $i\in [k]$ and $g_{k+1}=\cdots = g_n= \frac{s-k}{\sum_{i\in [k+1,n]}\beta^*_i}$. By the definition of $\bm g$, we can show that
\begin{align*}
\bm g^{\top}\bm \theta^* & =  \sum_{i\in [k]} g_i\theta^*_i + g_{k+1}   \sum_{i\in [k+1,n]} \theta^*_i \le g_k\sum_{i\in [k]} \theta^*_i +  g_{k+1}   \sum_{i\in [k+1,n]} \theta^*_i  = g_k \sum_{i\in [k]} \theta^*_i -  g_{k+1}   \sum_{i\in [k]} \theta^*_i \\ &
\le (g_{k}-g_{k+1}) \lambda_{\min}(\bm C) \bigg(k-\sum_{i\in [k]}(x^*)^{\downarrow}_i \bigg) = -\Delta^{lb} \le 0
\end{align*}
where the first inequality is from $
\theta^*_i\ge 0$ for all $i\in [k]$ in \eqref{ineq} and $g_1\le \cdots \le g_k$, the second equality is due to the fact that $\sum_{i\in [n]}\theta_i^*=0$ in \eqref{ineq}, 
and the second inequality arises from the lower bound of $\sum_{i\in [k]} \theta^*_i$ in \eqref{ineq}. By definition, we have that $g_k<g_{k+1}$. In addition, the inequality $(k-\sum_{i\in [k]}(x^*)^{\downarrow}_i)\ge 0$ must hold given $\bm x^*\in [0,1]^n$. These results guarantee a nonnegative bound $\Delta^{lb}$.

  \item[(ii)] When $k\ge 1$ and $(x^*)^{\downarrow}_k < 1$, given $x_i^*\le 1$ for all $i\in [n]$, we have that
$k>\sum_{i\in [k]}(x^*)^{\downarrow}_i$. Based on Part (i), it is easy to show that
$
 \hat{z}(0)-  \hat{z}\left(\lambda_{\min}(\bm C)\right) \ge \lambda_{\min}(\bm C)( k-\sum_{i\in [k]}(x^*)^{\downarrow}_i) (g_{k+1}-g_{k}) > 0
$.
 We thus conclude the proof. \qed 
\end{enumerate}
\end{proof}
\Cref{them:bound} provides a theoretical guarantee  $\Delta^{lb}$ for the improvement of   \ref{eq:upper} over \ref{eq:fact}. Part (ii) of \Cref{them:bound} provides a sufficient condition under which \ref{eq:upper} with $t=\lambda_{\min}(\bm C)$ is strictly tighter than \ref{eq:fact}.
A tighter concave relaxation is often beneficial to enhance the theoretical guarantees of approximation algorithms. By leveraging \ref{eq:fact},
\cite{li2024best} derived the best-known and first-known approximation bounds when applying the randomized sampling and local search algorithms to  \ref{mesp}, respectively. We show that \ref{eq:upper} with $t=\lambda_{\min}(\bm C)$ allows us to enhance these approximation bounds by  $\Delta^{lb}$. Analogously, a strict improvement occurs when the condition in Part (ii) of \Cref{them:bound} is satisfied.

\begin{corollary}\label{cor:approx}
The randomized sampling algorithm of \citet[algorithm 2]{li2024best} returns a $(s\log(s/n)+\log(\binom{n}{s}) -\Delta^{lb})$-approximation bound for \ref{mesp}. The local search algorithm of \citet[algorithm 4]{li2024best} returns a   $(s\min\{\log(s), \log(n-s-n/s+2)\} -\Delta^{lb})$-approximation bound for \ref{mesp}.  
\end{corollary}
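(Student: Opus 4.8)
The plan is to obtain both refined guarantees by revisiting the analysis of \cite{li2024best} and replacing the factorization bound \ref{eq:fact} with the sharper estimate furnished by \Cref{them:bound}. The crucial point is that the two approximation guarantees in \cite{li2024best} are established by lower bounding the value of the returned subset in terms of $\hat{z}(0)$, not directly in terms of $z^*$. Concretely, their arguments show that the randomized sampling algorithm returns a subset $S$ with $\E{\log\det(\bm C_{S,S})} \ge \hat{z}(0) - \gamma_0$, where $\gamma_0 = s\log(s/n)+\log\binom{n}{s}$, and that the local search algorithm returns a subset $S$ with $\log\det(\bm C_{S,S}) \ge \hat{z}(0) - \gamma_1$, where $\gamma_1 = s\min\{\log(s),\log(n-s-n/s+2)\}$. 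I would first isolate these two intermediate inequalities from their proofs, since everything else follows from them mechanically.

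Next I would assemble the chain connecting $\hat{z}(0)$ to $z^*$. By Part (i) of \Cref{them:bound}, $\hat{z}(0) - \hat{z}(\lambda_{\min}(\bm C)) \ge \Delta^{lb}$, and because \ref{eq:upper} is a valid upper bound for \ref{mesp} at every $t$ with $0 \le t \le \lambda_{\min}(\bm C)$, we also have $\hat{z}(\lambda_{\min}(\bm C)) \ge z^*$. Adding these yields
\[
\hat{z}(0) \ge \hat{z}(\lambda_{\min}(\bm C)) + \Delta^{lb} \ge z^* + \Delta^{lb}.
\]
Substituting this into the intermediate inequalities above gives, for the sampling algorithm,
\[
\E{\log\det(\bm C_{S,S})} \ge \hat{z}(0) - \gamma_0 \ge z^* + \Delta^{lb} - \gamma_0 = z^* - (\gamma_0 - \Delta^{lb}),
\]
and an identical computation with $\gamma_1$ in place of $\gamma_0$ for the local search algorithm. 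These are precisely the claimed $(\gamma_0 - \Delta^{lb})$- and $(\gamma_1 - \Delta^{lb})$-approximation bounds, and the nonnegativity of $\Delta^{lb}$ (also from \Cref{them:bound}) confirms that each new bound is no larger than its counterpart in \cite{li2024best}.

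The main obstacle is the first step: confirming that the proofs in \cite{li2024best} genuinely deliver guarantees measured against $\hat{z}(0)$ rather than against $z^*$ itself. If their rounding and local-optimality analyses only produced a bound of the form $z^* - \text{value} \le \gamma$ without passing through the relaxation value $\hat{z}(0)$, then the slack $\Delta^{lb}$ between $\hat{z}(0)$ and $z^*$ could not be harvested and the improvement would fail. I expect this to hold because approximation analyses for \ref{mesp} via concave relaxations inherently compare the integral solution to the relaxation optimum; once that is verified, the remaining arithmetic is immediate. As a closing remark, invoking Part (ii) of \Cref{them:bound} shows that when $k \ge 1$ and $(x^*)^{\downarrow}_k < 1$ the quantity $\Delta^{lb}$ is strictly positive, so both approximation bounds are then strictly smaller than those in \cite{li2024best}.
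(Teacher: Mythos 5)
Your proposal is correct and follows essentially the same route as the paper: both isolate the intermediate inequalities from \citet{li2024best} that bound the returned objective value against $\hat{z}(0)$, then chain $\hat{z}(0) \ge \hat{z}(\lambda_{\min}(\bm C)) + \Delta^{lb} \ge z^* + \Delta^{lb}$ via Part (i) of \Cref{them:bound} and the validity of \ref{eq:upper} as an upper bound. The concern you flag about whether the cited analyses pass through the relaxation value is resolved exactly as you anticipate -- the paper invokes the proofs of theorems 5 and 7 of \citet{li2024best} for precisely those intermediate inequalities.
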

\begin{proof}
Let $\underline z$ be the objective value of \ref{mesp} returned by the randomized sampling algorithm.  We have that
\begin{align*}
  \underline z &\ge \hat{z}(0) - s\log\left(\frac s n\right ) -\log\left(\binom{n}{s}\right) \ge \hat{z}(\lambda_{\min}(\bm C)) + \Delta^{lb} - s\log\left(\frac s n\right ) -\log\left(\binom{n}{s}\right)\\
  &\ge z^* + \Delta^{lb} - s\log\left(\frac s n\right ) -\log\left(\binom{n}{s}\right),  
\end{align*}
where the first inequality follows from the proof of \citet[theorem 5]{li2024best} and 
the second inequality is because of Part (i) of \Cref{them:bound}. 

For the local search algorithm,  using the result of  \citet[theorem 7]{li2024best}, the rest of the proof follows from the above and is thus omitted.
\qed
\end{proof}

\Cref{them:bound} provides important insights into how the condition number of $\bm C$ affects the performance of \ref{eq:upper} at $t=\lambda_{\min}(\bm C)$. As seen in \Cref{them:bound}, the lower bound $\Delta^{lb}$ is an increasing function of $\lambda_{\min}(\bm C)$. Thus, a larger $\lambda_{\min}(\bm C)$ is desired to guarantee a greater improvement. Besides, the lower bound  is determined by the difference between the reciprocals of the eigenvalues of $\M_0(\bm x^*)$, specifically $(s-k)/(\sum_{i\in[k+1,n]} \beta_i^*) -1/\beta_k^*$. The difference generally decreases as we scale up all the eigenvalues $\bm \beta^*$. It is, therefore, likely that a negative relationship between $\Delta^{lb}$ and $\lambda_{\max}(\bm C)$ exists, given that the eigenvalues of $\M_0(\bm x^*)$ are bounded by  $\lambda_{\max}(\bm C)$ according to \Cref{remark:decom}. Then, a possible implication is that \ref{eq:upper} is more effective at improving \ref{eq:fact} at $t=\lambda_{\min}(\bm C)$ when the condition number of $\bm C$, denoted $\lambda_{\max}(\bm C)/\lambda_{\min}(\bm C)$, is smaller. Our numerical results provide further support for the hypothesis.

\subsection{Theoretical guarantees for the improvement of \ref{eq:upper} over  \ref{eq:ddf}}\label{subsec:ddf}
This subsection generalizes the existing upper bound- \ref{eq:ddf} for \ref{mesp} and demonstrates that \ref{eq:upper} produces a tighter upper bound than \ref{eq:ddf}.

By setting $t=\lambda_{\min}(\bm C)$ in \eqref{eq_decom}, \cite{li2024d} transformed \ref{mesp} into the form of the D-optimality data fusion (DDF) problem. 
We begin with a slight generalization of \citet[theorem 1]{li2024d} to any $t$, $0< t\le \lambda_{\min}(\bm C)$.

\begin{corollary}\label{cor:ddf}
     For any $t$ with $0< t\le \lambda_{\min}(\bm C)$, \ref{mesp} reduces to
     \begin{align}\label{eq:mesp_ddf}
       z^*=  \max_{\bm x\in \{0,1\}^n }  \bigg\{\log\det \left(\M_t(\bm x) + t \bm I \right): \sum_{i\in [n]} x_i=s \bigg\} -(n-s)\log(t).
     \end{align}
\end{corollary}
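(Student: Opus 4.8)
The plan is to derive this directly from \Cref{prop:mesp}, recognizing that the additive term $(n-s)\log(t)$ is precisely the contribution of the $n-s$ zero eigenvalues of $\M_t(\bm x)$ after the shift by $t\bm I$.

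First I would fix an arbitrary binary $\bm x \in \{0,1\}^n$ with $\sum_{i\in[n]} x_i = s$. As already observed in the proof of \Cref{cor:mesp1}, the matrix $\M_t(\bm x) = \sum_{i\in[n]} x_i \bm a_i(t)[\bm a_i(t)]^\top$ is a sum of exactly $s$ rank-one matrices and therefore has rank at most $s$; consequently its eigenvalues (in nonincreasing order) satisfy $\lambda_{s+1}(\M_t(\bm x)) = \cdots = \lambda_n(\M_t(\bm x)) = 0$.

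Next I would evaluate $\log\det(\M_t(\bm x) + t\bm I)$ using the fact that the eigenvalues of $\M_t(\bm x) + t\bm I$ are exactly $\lambda_i(\M_t(\bm x)) + t$ for $i\in[n]$. Splitting $\sum_{i\in[n]} \log(\lambda_i(\M_t(\bm x)) + t)$ into the top $s$ indices and the bottom $n-s$ indices, the top block equals $\Phi_s(\M_t(\bm x); t)$ by \Cref{def:phi}, while the bottom block contributes $(n-s)\log(t)$ because each of those $n-s$ eigenvalues is zero. This gives the key identity
$$\Phi_s(\M_t(\bm x); t) = \log\det\left(\M_t(\bm x) + t \bm I\right) - (n-s)\log(t).$$
Substituting this into the reformulation of \Cref{prop:mesp} and pulling the constant $(n-s)\log(t)$ (which is independent of $\bm x$) outside the maximization yields the claimed expression for $z^*$.

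This argument involves no real obstacle; the only points requiring care are that $t>0$ is needed for $\log(t)$ to be finite---exactly the hypothesis here, as opposed to the $t=0$ case covered separately by \Cref{prop:mesp}---and that the rank bound is used only to guarantee that the eigenvalues ranked $s+1$ through $n$ vanish, which holds regardless of whether the selected columns $\{\bm a_i(t): x_i=1\}$ are linearly independent.
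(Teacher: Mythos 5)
Your proposal is correct and follows essentially the same route as the paper's own proof: both invoke \Cref{prop:mesp}, use the fact that $\M_t(\bm x)$ has rank at most $s$ for binary $\bm x$ so that $\lambda_{s+1}=\cdots=\lambda_n=0$, and split $\log\det(\M_t(\bm x)+t\bm I)=\sum_{i\in[n]}\log(\lambda_i+t)$ into the top $s$ terms (giving $\Phi_s(\M_t(\bm x);t)$) and the bottom $n-s$ terms (giving $(n-s)\log(t)$, finite since $t>0$). No gaps.
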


\begin{proof}
For any subset $S$, $|S|=s$, let $\bm x$ be the binary characteristic vector  of $S$.
Following from the proof of \Cref{prop:mesp}, we get
$
    \log\det(\bm C_{S,S})= \sum_{i\in [s]} \log(\lambda_i + t),
$
where $\lambda_1\ge \cdots \ge \lambda_s\ge 0 = \lambda_{s+1}=\cdots=\lambda_n$ are eigenvalues of the matrix $\M_t(\bm x)$. Given $t>0$, it is easy to check that
\begin{align*}
   \sum_{i\in [s]} \log(\lambda_i + t) = \sum_{i\in [n]} \log(\lambda_i + t) - (n-s)\log(t) = \log\det \left(\M_t(\bm x) + t \bm I \right) -(n-s)\log(t).
\end{align*}
Thus,  we conclude the proof.
\qed
\end{proof}
\Cref{cor:ddf} immediately provides a concave integer program for \ref{mesp}, and it falls into the DDF framework.
 In \eqref{eq:mesp_ddf}, $\M_t(\bm x)$ and $t \bm I$ correspond to the information obtained from newly selected and existing data of DDF, respectively.
A  concave relaxation can be naturally obtained from relaxing the binary variables $\bm x$ of \eqref{eq:mesp_ddf} to be continuous.  We refer to this upper bound as ``DDF-R" to denote the relaxation of DDF. 
\begin{align}\label{eq:ddf}
z^*\le \hat{z}^D(t) := \max_{\bm x\in [0,1]^n }  \bigg\{ \log\det \left(\M_t(\bm x) + t \bm I \right): \sum_{i \in [n]} x_i=s\bigg \}-(n-s)\log(t). \tag{DDF-R}
\end{align}
Note that for $t=0$, \ref{eq:ddf} approaches negative infinity due to the rank deficiency of the objective matrix. Therefore, the condition $0< t\le \lambda_{\min}(\bm C)$ must be satisfied. 

\ref{eq:ddf} has been widely used to provide an upper bound in  branch-and-bound-based methods for finding an optimal solution to DDF (see, e.g., \citealt{hendrych2023solving,li2024d}).
However, \ref{eq:ddf} may only sometimes serve as a strong upper bound, as demonstrated in the numerical results of \cite{li2024d}. They also demonstrated that  \ref{eq:ddf} with $t=\lambda_{\min}(\bm C)$ is not comparable with \ref{eq:fact}. By contrast, our proposed \ref{eq:upper} bound outperforms \ref{eq:ddf}, and it is strictly better in some cases, as shown below.

\begin{theorem}\label{them:ddfdom}
For any $t$ with $0< t\le \lambda_{\min}(\bm C)$, 
the following hold:
\begin{enumerate}
    \item[(i)] \ref{eq:upper} dominates \ref{eq:ddf};
    \item[(ii)]  \ref{eq:upper} strictly dominates \ref{eq:ddf}  if the integer $s$ is strictly less than the rank of  $\bm C - t\bm I$ and  \ref{eq:upper} is not an exact concave relaxation of \ref{mesp}, i.e., $\hat z(t) > z^*$; and
    \item[(iii)] \ref{eq:upper} meets \ref{eq:ddf} if the integer $s$ is no less than the rank of  $\bm C - t\bm I$.
\end{enumerate}
\end{theorem}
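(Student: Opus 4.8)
The plan is to derive all three parts from one pointwise comparison of the two objectives. Regard the \ref{eq:ddf} objective as a function $h(\bm X):=\log\det(\bm X+t\bm I)-(n-s)\log(t)$ on $\S_+^n$; it is concave because $\log\det$ is concave on the positive definite cone and $\bm X+t\bm I$ is positive definite. Since $\lambda_i(\bm X)\ge 0$ gives $\log(\lambda_i(\bm X)+t)\ge\log(t)$ for $i\in[s+1,n]$, we have $h(\bm X)\ge\sum_{i\in[s]}\log(\lambda_i(\bm X)+t)=\Phi_s(\bm X;t)$, so $h$ is a concave overestimator of $\Phi_s(\cdot;t)$. As $\widehat\Phi_s(\cdot;t)$ is the concave envelope, i.e.\ the least concave overestimator (\Cref{prop:ce}), it follows that $\widehat\Phi_s(\bm X;t)\le h(\bm X)$ for every $\bm X\in\S_+^n$. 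For part (i) I would substitute $\bm X=\M_t(\bm x)$ and maximize over the feasible set: the \ref{eq:upper} objective is dominated pointwise by the \ref{eq:ddf} objective, so $\hat{z}(t)\le\hat{z}^D(t)$.

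For part (iii), observe that $\M_t(\bm x)=\bm A(t)\Diag(\bm x)\bm A(t)^\top$ has rank at most $\rank(\bm C-t\bm I)$ for every $\bm x$. Thus when $s\ge\rank(\bm C-t\bm I)$ we have $\lambda_i(\M_t(\bm x))=0$ for all $i\in[s+1,n]$, which makes the inequality $h\ge\Phi_s$ an equality (the bottom eigenvalues contribute exactly $(n-s)\log(t)$) and, arguing exactly as in \Cref{cor:mesp1}, also makes $\widehat\Phi_s(\M_t(\bm x);t)=\psi_s(\bm\lambda(\M_t(\bm x))+t\mathbb{I}_s)$ collapse to $\Phi_s(\M_t(\bm x);t)$. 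Both objectives therefore equal $\Phi_s(\M_t(\bm x);t)$ at every feasible $\bm x$, whence $\hat{z}(t)=\hat{z}^D(t)$.

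Part (ii) is the crux, which I would prove by contradiction: assume $\hat{z}(t)=\hat{z}^D(t)$ and let $\bm x^*$ attain $\hat{z}(t)$, writing $\bm X^*:=\M_t(\bm x^*)$. The chain $\widehat\Phi_s(\bm X^*;t)\le h(\bm X^*)\le\hat{z}^D(t)=\hat{z}(t)=\widehat\Phi_s(\bm X^*;t)$ forces $h(\bm X^*)=\widehat\Phi_s(\bm X^*;t)$. The first task is to sharpen part (i) to a strict inequality. With $y_i:=\lambda_i(\bm X)+t$ and $k$, $p:=\tfrac{1}{s-k}\sum_{i\in[k+1,n]}y_i$ as in \Cref{def:psi}, each $y_i$ for $i\in[k+1,s]$ and each $\lambda_i(\bm X)+t$ for $i\in[s+1,n]$ lies in $[t,p]$ (using the eigenvalue ordering and $p\ge y_{k+1}$). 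Applying the chord bound from the strict concavity of $\log$ on $[t,p]$ to each of these $n-k$ numbers and summing --- their total is exactly $(s-k)p+(n-s)t$ --- recovers $h(\bm X)\ge\widehat\Phi_s(\bm X;t)$, with equality forcing each number to equal $t$ or $p$; since the numbers are nonincreasing and their sum is fixed, exactly $s-k$ of them equal $p$ and $n-s$ equal $t$, so $\lambda_i(\bm X)=0$ for all $i>s$. Hence $h(\bm X^*)=\widehat\Phi_s(\bm X^*;t)$ yields $\rank(\bm X^*)\le s$, so $\widehat\Phi_s(\bm X^*;t)=\Phi_s(\bm X^*;t)$ and, since $\widehat\Phi_s\ge\Phi_s$ with equality at the maximizer, $\bm x^*$ also maximizes the non-concave objective $\Phi_s(\M_t(\cdot);t)$ over the feasible set.

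The remaining, and I expect hardest, step is to rule out $\rank(\bm X^*)\le s$ under the two hypotheses. Non-exactness $\hat{z}(t)>z^*$ forces $\bm x^*$ to be non-binary --- a binary maximizer would give $\hat{z}(t)=\Phi_s(\bm X^*;t)=z^*$ --- so its support exceeds $s$, while $s<\rank(\bm C-t\bm I)$ ensures the active columns $\{\bm a_i(t):x_i^*>0\}$ cannot span all of $\mathrm{range}(\bm A(t))$. Since $\rank(\bm X^*)\le s$ also gives $\hat{z}(t)=\Phi_s(\bm X^*;t)=\max_{\bm x}\Phi_s(\M_t(\bm x);t)$, the task reduces to proving that a rank-$\le s$ maximizer of $\Phi_s(\M_t(\cdot);t)$ over the feasible polytope must be attainable at a binary point, which would give $\hat{z}(t)=z^*$ and contradict the hypothesis. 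I would attack this via a Cauchy--Binet/extreme-point analysis of the top-$s$ eigenvalue product of $\M_t(\bm x)+t\bm I$ on $\mathrm{range}(\bm X^*)$, handling the non-smoothness from the degenerate eigenvalue $t$ using the subgradient of $\psi_s$ in \Cref{remark:grad}; this, rather than parts (i) and (iii), is where the real work lies.
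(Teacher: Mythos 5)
Your parts (i) and (iii) are correct, and part (i) takes a genuinely different route from the paper. The paper proves (i) by constructing, for each feasible $\bm x$ with eigenvalues $\bm\lambda$ of $\M_t(\bm x)$, the averaged vector $\bm \beta$ from \Cref{def:psi}, observing $\bm \beta + t\bm 1 \succ \bm \lambda + t\bm 1$, and invoking the (strict) Schur-concavity of $\sum_{i}\log(\cdot)$; you instead observe that $h(\bm X)=\log\det(\bm X+t\bm I)-(n-s)\log t$ is a concave overestimator of $\Phi_s(\cdot\,;t)$ and hence dominates the concave envelope $\widehat\Phi_s(\cdot\,;t)$. Your argument is shorter and makes the mechanism transparent; the paper's majorization construction buys reusability (the same $\bm\beta$ drives the quantitative bound in \Cref{them:ddf}) and hands over the equality case needed for (ii) via strict Schur-concavity. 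Your chord-bound analysis of equality recovers the same dichotomy (equality iff $\lambda_i=0$ for all $i>s$), so through that point the two routes are of comparable strength, and your part (iii) sandwich $\Phi_s\le\widehat\Phi_s\le h=\Phi_s$ matches the paper's conclusion.

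The genuine gap is in part (ii). You correctly reduce the problem to ruling out $\rank(\M_t(\bm x^*))\le s$, but you then propose to do so by showing that a rank-$\le s$ fractional optimum of \ref{eq:upper} would force $\hat z(t)=z^*$ --- a rounding/exactness claim that you leave unproven and that is substantially harder than what is required. The paper argues in the opposite direction and directly: non-exactness $\hat z(t)>z^*$ forces $\bm x^*$ to be non-binary, hence its support has size at least $s+1$; combined with $s<\rank(\bm C-t\bm I)$ this is used to conclude $\rank(\M_t(\bm x^*))>s$, i.e., $\lambda_{s+1}(\M_t(\bm x^*))>0$, after which strictness follows from exactly the equality analysis you have already carried out (or from strict Schur-concavity, since $\bm\lambda+t\bm 1$ then cannot be a permutation of $\bm\beta+t\bm 1$). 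So the missing piece is not the Cauchy--Binet/extreme-point machinery you anticipate, but the much lighter claim that the columns $\{\bm a_i(t): x_i^*>0\}$ span a space of dimension exceeding $s$; you should switch to that direct route. Be aware, when writing it, that this rank claim needs the support columns themselves --- not merely the full matrix $\bm A(t)$ --- to have rank greater than $s$, which deserves an explicit justification.
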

\begin{proof}
Our proof contains three parts.
\begin{enumerate}
    \item[(i)] To prove the result, we show that the  objective value of \ref{eq:ddf} is larger than that of \ref{eq:upper} for any feasible solution $\bm x\in [0,1]^n$. Let $\lambda_1\ge \cdots \ge \lambda_n\ge 0$ denote the eigenvalues of $\M_t(\bm x)$.  Suppose $0\le k \le s-1$ is an integer, such that $\lambda_{k} > \frac{1}{s-k} \sum_{i\in[k+1,n]} \lambda_{i} \ge \lambda_{k+1}$, with the convention $\lambda_0=\infty$. Then, we construct a vector $\bm \beta\in \Re_+^n$ as
\[ \beta_i = \lambda_{i}, \forall i\in [k], \ \ \beta_{k+1} =\cdots=\beta_s = \frac{1}{s-k} \sum_{j\in[k+1,n]} \lambda_{j}, \ \ \beta_i = 0,\forall i\in[s+1, n].\]
From the construction above, we get
\begin{align*}
 \sum_{i\in [n]}\log(\beta_i+ t)- (n-s)\log(t) = \sum_{i\in [s]}\log(\beta_i+ t)  = \psi_s(\bm \lambda+ t\mathbb{I}_s) = \widehat{\Phi}_s\left(\M_t(\bm x); t\right),
\end{align*}
where the first equation is due to  $\beta_i=0$ for all $i\in [s+1, n]$, the second one is from \Cref{def:psi}, and the last one is from \Cref{prop:ce}.

In addition, it is easy to verify that $\bm \beta \succ \bm \lambda$.   Majorization remains valid after adding the vector $t\bm 1$; that is,   $\bm \beta + t\bm 1 \succ \bm \lambda+ t\bm 1$. It is known that for a vector $\bm y\in \Re_{++}^n$, the function $\sum_{i\in [n]}\log(y_i)$ is strictly Schur-concave (see, e.g., \citealt{marshall1979inequalities,shi2007schur}). Thus, the objective value of \ref{eq:ddf} satisfies
\begin{align*}
     \sum_{i\in [n]}\log(\lambda_i+ t) & \ge     \sum_{i\in [n]}\log(\beta_i+ t) =  \widehat{\Phi}_s\left(\M_t(\bm x); t\right) + (n-s)\log(t).
\end{align*}
Thus, we must have $\hat{z}^D(t)\ge \hat z(t)$ at optimality.
\item[(ii)] For any  $t$ with $0< t\le \lambda_{\min}(\bm C)$, suppose $\bm x^*$ is an optimal solution to \ref{eq:upper}. Then, $\bm x$ must not be binary.  Otherwise, $\bm x^*$ is also optimal for \ref{eq:mesp2}, which contradicts with $\hat{z}(t)>z^*$. Thus, the support of $\bm x^*$ is at least size-$(s+1)$. In addition, the rank of $\bm C -t\bm I$ is strictly greater than $s$. By the definition of $\M_t(\bm x^*)$ and \Cref{remark:decom},
its rank must exceed $s$ in this context. 

Let $\lambda_1\ge \cdots \ge \lambda_n\ge 0$ denote the eigenvalues of $\M_t(\bm x^*)$. Then, $\lambda_{s+1}$ is strictly positive. Following Part (i), we construct a vector $\bm \beta$ with $\beta_{s+1}=0$. Given $\lambda_{s+1}>\beta_{s+1} $, $\bm \lambda+t\bm 1$ can not be a permutation of $\bm \beta+t \bm 1$. Following Part (i) to use the property of a strictly Schur-concave function, we obtain that
\begin{align*}
     \sum_{i\in [n]}\log(\lambda_i+ t) & >   \sum_{i\in [n]}\log(\beta_i+ t) =  \widehat{\Phi}_s\left(\M_t(\bm x^*); t\right) + (n-s)\log(t) = \hat{z}(t) + (n-s)\log(t).
\end{align*}
As $\bm x^*$ is feasible for \ref{eq:ddf}, the optimal value $\hat{z}^D(t)$ must be strictly greater than $\hat{z}(t)$.
\item[(iii)] We establish that the objective values of \ref{eq:ddf} and \ref{eq:upper} are equal in this case for any feasible solution $\bm x\in [0,1]^n$. It suffices to prove that the vectors $\bm \lambda $ and $\bm \beta$ in Part (i) are the same.
Let $r$ be the rank of $\bm C -t\bm I$. Given $r\le s$ and $\sum_{i\in [n]}x_i=s$, according to \Cref{remark:decom} and \Cref{def:matrix}, the matrix $\M_t(\bm x)$ must be rank-$r$, and thus, its eigenvalues satisfy $\lambda_1\ge \cdots \ge  \lambda_r >\lambda_{r+1}=\cdots=\lambda_{n}=0$.  Next, there are two cases to be discussed.
\begin{enumerate}
    \item $r=s$. First, there always exists an integer $0\le \ell\le s-1$  such that $\lambda_{\ell} > \lambda_{\ell+1}=\cdots=\lambda_r $, with the convention $\lambda_{0}=\infty$. We can verify that $\lambda_{\ell} > \frac{1}{s-\ell} \sum_{i\in [\ell+1, n]} \lambda_i=\frac{1}{r-\ell} \sum_{i\in [\ell+1, r]} \lambda_i = \lambda_{\ell+1}$, where the first equation follows from the facts that $s=r$ and $\lambda_{r+1}=\cdots=\lambda_n=0$. The integer $k$ in Part (i) is unique, and thus, it must equal $\ell$. By the construction of $\bm \beta$, we have that $\bm \beta = \bm \lambda$. 
    
    \item $r<s$. It is clear that $\lambda_r > 0 =\frac{1}{s-r}\sum_{i\in [r+1, n]}\lambda_i=\lambda_{r+1}$. Here, the integer $k$ in Part (i) equals $r$. It follows that that $\bm \beta = \bm \lambda$. 
\end{enumerate}
Since the objective values of \ref{eq:ddf} and \ref{eq:upper} are always equal, their optimal values must be the same. We thus complete the proof.
\qed
\end{enumerate}
\end{proof}
We would like to highlight that 
both conditions in Part (ii) of \Cref{them:ddfdom} can be readily satisfied. That is, \ref{eq:upper} strictly dominates \ref{eq:ddf} in most cases.  First,  when $0<t<\lambda_{\min}(\bm C)$, the matrix $\bm C-t\bm I$ is full-rank. Thus, the first condition is, in fact,  the inequality $s\le n-1$ under this setting.  When $s=n$, it is the trivial case, as both \ref{eq:upper} and \ref{eq:ddf} yield the same optimal values as \ref{mesp}. For $t=\lambda_{\min}(\bm C)$, the matrix $\bm C-t\bm I$ has a rank at most  $n-1$. We use $t=\lambda_{\min}(\bm C)$ in the numerical study, where the first condition reduces to $s\le n-2$.
Second, if \ref{eq:upper} matches \ref{mesp}, i.e., $\hat z(t) = z^*$, it is undoubtedly the strongest upper bound. 

As \ref{eq:upper} is stronger than \ref{eq:fact} for any $t$ with $0<t\le \lambda_{\min}(\bm C)$, Part (iii) of \Cref{them:ddfdom} results in a sufficient condition under which \ref{eq:ddf} dominates \ref{eq:fact}.
\begin{corollary}
Suppose that the integer $s$ is no less than the rank of $\bm C-t\bm I$. Then, \ref{eq:ddf} dominates \ref{eq:fact} for any $t$, $0<t\le \lambda_{\min}(\bm C)$. 
\end{corollary}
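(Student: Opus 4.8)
The plan is to obtain the corollary as an immediate composition of two results already established in the excerpt, routed through the intermediate \ref{eq:upper} bound. First I would invoke Part (iii) of \Cref{them:ddfdom}: the hypothesis here is exactly the rank condition appearing there, namely that $s$ is no less than the rank of $\bm C - t\bm I$. That part delivers the exact identity $\hat{z}^D(t) = \hat{z}(t)$ for every $t$ with $0 < t \le \lambda_{\min}(\bm C)$; in words, under this rank condition the \ref{eq:ddf} bound and the \ref{eq:upper} bound coincide at optimality.

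Next I would call on the dominance already recorded in \Cref{cor:dom} (itself a consequence of the monotonicity in \Cref{them:mont}): for any $t$ with $0 < t \le \lambda_{\min}(\bm C)$, the \ref{eq:upper} bound is at least as tight as the \ref{eq:fact} bound, i.e., $\hat{z}(t) \le \hat{z}(0)$. Chaining the identity with this inequality yields
\[
\hat{z}^D(t) = \hat{z}(t) \le \hat{z}(0),
\]
which is precisely the assertion that \ref{eq:ddf} dominates \ref{eq:fact} on the stated range of $t$, completing the argument.

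I expect essentially no technical obstacle, since the statement is a direct corollary of \Cref{them:ddfdom}(iii) and \Cref{cor:dom}. The only points requiring care are bookkeeping ones: confirming that both cited results are valid on the common interval $0 < t \le \lambda_{\min}(\bm C)$ (so that no extra restriction on $t$ is introduced), and keeping the direction of the word ``dominates'' consistent, meaning a smaller and hence tighter upper-bound value, which is exactly what the inequality $\hat{z}^D(t) \le \hat{z}(0)$ encodes.
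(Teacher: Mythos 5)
Your proposal is correct and matches the paper's own reasoning exactly: the corollary is stated immediately after the remark that it follows from Part (iii) of \Cref{them:ddfdom} (which gives $\hat{z}^D(t)=\hat{z}(t)$ under the rank condition) combined with \Cref{cor:dom} (which gives $\hat{z}(t)\le\hat{z}(0)$). Your bookkeeping on the common range of $t$ and the direction of ``dominates'' is also consistent with the paper.
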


Analogous to \ref{eq:upper},  we show that \ref{eq:ddf} decreases monotonically as $t$ increases by leveraging the theory of Schur-concavity. This indicates that setting $t=\lambda_{\min}(\bm C)$ yields the best \ref{eq:ddf} bound, which is exactly the one proposed by \cite{li2024d}.

\begin{proposition}
\label{them:montddf}
     \ref{eq:ddf}  is monotonically decreasing  with $t$, $0< t \le \lambda_{\min}(\bm C)$. That is, for all $t_1, t_2$ such that $0 < t_1\le t_2 \le \lambda_{\min}(\bm C)$, the inequality $\hat{z}^D(t_1)\ge \hat{z}^D(t_2)$ holds.
\end{proposition}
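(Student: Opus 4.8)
The plan is to prove the statement pointwise in the decision variable and then pass to the maximum. Fix any feasible $\bm x\in[0,1]^n$ with $\sum_{i\in[n]}x_i=s$ and define the inner objective
\[ h_{\bm x}(t):=\log\det\!\big(\M_t(\bm x)+t\bm I\big)-(n-s)\log t. \]
Since $\hat z^D(t)=\max_{\bm x}h_{\bm x}(t)$ over the fixed feasible set, it suffices to show that each $h_{\bm x}$ is nonincreasing on $(0,\lambda_{\min}(\bm C)]$: if $h_{\bm x}(t_1)\ge h_{\bm x}(t_2)$ for every $\bm x$, then picking $\bar{\bm x}$ optimal at $t_2$ gives $\hat z^D(t_1)\ge h_{\bar{\bm x}}(t_1)\ge h_{\bar{\bm x}}(t_2)=\hat z^D(t_2)$.

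The crucial step is a reformulation that makes $t$ enter the determinant affinely with a positive semidefinite coefficient. Writing $\bm V:=\bm A(t)\Diag(\sqrt{\bm x})$, \Cref{def:matrix} gives $\M_t(\bm x)=\bm V\bm V^{\top}$, so $\det(\M_t(\bm x)+t\bm I)=\det(t\bm I+\bm V\bm V^{\top})=\det(t\bm I+\bm V^{\top}\bm V)$, because $\bm V\bm V^{\top}$ and $\bm V^{\top}\bm V$ share the same spectrum. By \eqref{eq_decom}, $\bm V^{\top}\bm V=\Diag(\sqrt{\bm x})(\bm C-t\bm I)\Diag(\sqrt{\bm x})$, and setting $\bm W:=\Diag(\sqrt{\bm x})\bm C\Diag(\sqrt{\bm x})$ yields
\[ \bm M(t):=t\bm I+\bm V^{\top}\bm V=\bm W+t\,\Diag(\bm 1-\bm x), \qquad \det(\M_t(\bm x)+t\bm I)=\det\bm M(t). \]
The payoff is the operator inequality $\bm M(t)\succeq t\bm I$: since $t\le\lambda_{\min}(\bm C)$ forces $\bm C-t\bm I\succeq 0$, we have $\bm V^{\top}\bm V\succeq 0$, hence $\bm M(t)=t\bm I+\bm V^{\top}\bm V\succeq t\bm I\succ 0$. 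In particular $\bm M(t)^{-1}\preceq \tfrac1t\bm I$, so every diagonal entry satisfies $[\bm M(t)^{-1}]_{ii}\le 1/t$.

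Now differentiate $h_{\bm x}(t)=\log\det\bm M(t)-(n-s)\log t$ using $\tfrac{d}{dt}\log\det\bm M(t)=\tr(\bm M(t)^{-1}\bm M'(t))$ with $\bm M'(t)=\Diag(\bm 1-\bm x)$:
\[ h_{\bm x}'(t)=\tr\!\big(\bm M(t)^{-1}\Diag(\bm 1-\bm x)\big)-\frac{n-s}{t}=\sum_{i\in[n]}(1-x_i)\,[\bm M(t)^{-1}]_{ii}-\frac{n-s}{t}. \]
The diagonal bound and $1-x_i\ge 0$ give $\sum_{i\in[n]}(1-x_i)[\bm M(t)^{-1}]_{ii}\le \tfrac1t\sum_{i\in[n]}(1-x_i)=\tfrac{n-s}{t}$, so $h_{\bm x}'(t)\le 0$ throughout $(0,\lambda_{\min}(\bm C)]$. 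Integrating, $h_{\bm x}(t_1)\ge h_{\bm x}(t_2)$ for $0<t_1\le t_2\le\lambda_{\min}(\bm C)$, and the maximization argument above delivers $\hat z^D(t_1)\ge\hat z^D(t_2)$. The main obstacle is spotting the reformulation $\det(\M_t(\bm x)+t\bm I)=\det(\bm W+t\Diag(\bm 1-\bm x))$ together with the identity $\bm M(t)=t\bm I+\bm V^{\top}\bm V$; everything then reduces to the single operator inequality $\bm M(t)\succeq t\bm I$, which is exactly where the hypothesis $t\le\lambda_{\min}(\bm C)$ is consumed. An alternative, more in the spirit of \Cref{them:mont}, would phrase the same comparison through majorization of the perturbed eigenvalue vectors via \Cref{lem:major}, but the determinant form above makes the monotonicity transparent and self-contained.
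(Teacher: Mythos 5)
Your proof is correct, but it takes a genuinely different route from the paper's. The paper proves the same pointwise claim by invoking Part (i) of \Cref{lem:major} to get the majorization $\bm \mu^{t_2} + (t_2-t_1)\mathbb{I}_s + t_1\bm 1 \succ \bm \nu^{t_1} + t_1\bm 1$ and then applying the Schur-concavity of $\bm y \mapsto \sum_{i}\log(y_i)$, followed by the elementary estimate $\log(\mu_i^{t_2}+t_1) \ge \log(\mu_i^{t_2}+t_2) - \log(t_2/t_1)$ on the trailing $n-s$ coordinates; this keeps the argument in the same majorization framework used for \Cref{them:mont}. You instead exploit the identity $\det(\M_t(\bm x)+t\bm I) = \det\bigl(\Diag(\sqrt{\bm x})\,\bm C\,\Diag(\sqrt{\bm x}) + t\Diag(\bm 1 - \bm x)\bigr)$, which makes the $t$-dependence affine with a $t$-independent base matrix $\bm W$, and then differentiate: the operator inequality $\bm M(t) \succeq t\bm I$ (this is exactly where $t \le \lambda_{\min}(\bm C)$ enters, via $\bm C - t\bm I \succeq 0$) gives $[\bm M(t)^{-1}]_{ii} \le 1/t$ and hence $h_{\bm x}'(t) \le 0$. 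All steps check out: $\bm V$ is square so $\bm V\bm V^{\top}$ and $\bm V^{\top}\bm V$ share the full spectrum, $\bm M(t) \succ 0$ on the whole interval so the derivative formula $\tr(\bm M(t)^{-1}\bm M'(t))$ applies, and the trace bound uses only $1-x_i \ge 0$ and $\sum_i(1-x_i)=n-s$. Your version is self-contained and avoids \Cref{lem:major} entirely, which is a genuine simplification for this proposition in isolation; the paper's version buys uniformity, reusing the Ky Fan/Schur-concavity machinery already built for the \ref{eq:upper} monotonicity, and avoids any appeal to differentiability.
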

\begin{proof}
In the following, we show that given a feasible solution $\bm x$ of \ref{eq:ddf},  the objective function is monotonically decreasing  with $t$.
Suppose that $\bm \nu^{t_1} \in \Re_+^n$ and $\bm \mu^{t_2}\in \Re_+^n$ are the vectors of the eigenvalues of $\M_{t_1}(\bm x)$ and $\M_{t_2}(\bm x)$, respectively, sorted in nonincreasing order.

Part (i) of \Cref{lem:major} implies that $\bm \mu^{t_2} + (t_2-t_1)\mathbb{I}_s \succ \bm \nu^{t_1}$. Adding  $t_1 \bm{1}$ on both sides directly leads to $\bm \mu^{t_2} + (t_2-t_1)\mathbb{I}_s +t_1\bm{1} \succ \bm \nu^{t_1}+t_1\bm{1}$. As the function $\sum_{i\in [n]}\log(y_i)$ is Schur-concave for $\bm y\in \Re_{++}^n$ (see, e.g., \citealt{marshall1979inequalities}), we have that
\begin{align*}
&\log\det(\M_{t_1}(\bm x) + t_1\bm I)  = \sum_{i\in [n]}\log(\nu_i^{t_1}+ t_1) \ge     \sum_{i\in [s]}\log(\mu_i^{t_2}+ t_2)+ \sum_{i\in [s+1, n]}\log(\mu_i^{t_1}+ t_1) \\
&\ge  \sum_{i\in [n]}\log(\mu_i^{t_2}+ t_2)
- (n-s)\log\left(\frac{t_2}{t_1}\right) = \log\det(\M_{t_2}(\bm x) + t_2\bm I) - (n-s)\log\left(\frac{t_2}{t_1}\right),
\end{align*}
where the second inequality is because  $\log(\mu_i^{t_2}+ t_1)+\log(t_2/t_1)= \log(t_2/t_1\mu_i^{t_2}+ t_2)\ge\log(\mu_i^{t_2}+ t_2)$ for all $i\in [s+1, n]$. Thus, the monotonicity of \ref{eq:ddf} immediately stems from its  monotonic objective over $t$. We conclude the proof. \qed
\end{proof}

Next, we derive a theoretical bound for the difference in optimal values between \ref{eq:upper} and \ref{eq:ddf} with $t=\lambda_{\min}(\bm C)$ using the property of the natural logarithmic function.
\begin{theorem}\label{them:ddf}
Suppose that $\bm x^*$ is an optimal solution of  \ref{eq:upper} $t=\lambda_{\min}(\bm C)$ and the vector $\bm \lambda^* \in \Re_+^n$ contains the eigenvalues of $\M_{\lambda_{\min}(\bm C)}(\bm x^*)$ in nonincreasing order. Then, we have that
\[\hat{z}^D(\lambda_{\min}(\bm C)) - \hat{z}(\lambda_{\min}(\bm C)) \ge \Theta^{lb} := \left(\frac{1}{\lambda^*_{s+1} + \lambda_{\min}(\bm C)}-\frac{1}{\lambda^*_{s}+\lambda_{\min}(\bm C)} \right) \sum_{i\in [s+1,n]} \lambda^*_i  \ge 0.\]
\end{theorem}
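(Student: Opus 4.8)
The plan is to reduce the bound to a single-point estimate and then exploit the concavity of the logarithm. Since \ref{eq:upper} and \ref{eq:ddf} share the feasible region $\{\bm x\in[0,1]^n:\sum_{i\in[n]}x_i=s\}$ and $\bm x^*$ attains the optimum of \ref{eq:upper} at $t:=\lambda_{\min}(\bm C)$, feeding $\bm x^*$ into \ref{eq:ddf} gives $\hat{z}^D(t)\ge \sum_{i\in[n]}\log(\lambda^*_i+t)-(n-s)\log t$, whereas $\hat{z}(t)$ equals the \ref{eq:upper} objective at $\bm x^*$. Thus it suffices to lower-bound the difference of the two objective values at the single point $\bm x^*$, which removes the optimization and leaves a purely spectral inequality in $\bm\lambda^*$.

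First I would evaluate the \ref{eq:upper} objective at $\bm x^*$. By \Cref{prop:ce} it equals $\psi_s(\bm\lambda^*+t\mathbb{I}_s)$, and I would check that the breakpoint $k$ of \Cref{def:psi} for $\bm\lambda^*+t\mathbb{I}_s$ is the integer $k$ with $\lambda^*_k>\bar\lambda^*\ge\lambda^*_{k+1}$, where $\bar\lambda^*:=\frac{1}{s-k}\sum_{i\in[k+1,n]}\lambda^*_i$; indeed, adding $t$ to the top-$s$ entries and to the tail average shifts both sides of the defining inequalities by the same $t$, so the breakpoint is unchanged. Hence the \ref{eq:upper} objective at $\bm x^*$ is $\sum_{i\in[k]}\log(\lambda^*_i+t)+(s-k)\log(\bar\lambda^*+t)$. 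Subtracting this from the \ref{eq:ddf} value and cancelling the $i\in[k]$ terms, the difference is bounded below by
\[\sum_{i\in[k+1,s]}\big(\log(\lambda^*_i+t)-\log(\bar\lambda^*+t)\big)+\sum_{i\in[s+1,n]}\big(\log(\lambda^*_i+t)-\log t\big).\]

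Next I would estimate each group by the elementary bound $\log a-\log b\ge (a-b)/a$ (the tangent-line inequality for the concave $\log$, equivalently $\log x\le x-1$). For $i\in[s+1,n]$ this gives $\log(\lambda^*_i+t)-\log t\ge \lambda^*_i/(\lambda^*_i+t)\ge \lambda^*_i/(\lambda^*_{s+1}+t)$, using $\lambda^*_i\le\lambda^*_{s+1}$, so the tail group is at least $\frac{1}{\lambda^*_{s+1}+t}\sum_{i\in[s+1,n]}\lambda^*_i$. For $i\in[k+1,s]$, since $\lambda^*_i\le\bar\lambda^*$ and $\lambda^*_i\ge\lambda^*_s$, the same bound yields $\log(\lambda^*_i+t)-\log(\bar\lambda^*+t)\ge (\lambda^*_i-\bar\lambda^*)/(\lambda^*_i+t)\ge (\lambda^*_i-\bar\lambda^*)/(\lambda^*_s+t)$, so the middle group is at least $\frac{1}{\lambda^*_s+t}\sum_{i\in[k+1,s]}(\lambda^*_i-\bar\lambda^*)$. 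Using $(s-k)\bar\lambda^*=\sum_{i\in[k+1,n]}\lambda^*_i$, the mass-conservation identity $\sum_{i\in[k+1,s]}(\lambda^*_i-\bar\lambda^*)=-\sum_{i\in[s+1,n]}\lambda^*_i$ turns the middle group into $-\frac{1}{\lambda^*_s+t}\sum_{i\in[s+1,n]}\lambda^*_i$; adding the two groups produces exactly $\Theta^{lb}$, and $\Theta^{lb}\ge0$ follows from $\lambda^*_{s+1}\le\lambda^*_s$ together with $t>0$, which keeps all denominators positive.

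The main obstacle is the sign bookkeeping that converts the two reciprocal weights into the single factor $\frac{1}{\lambda^*_{s+1}+t}-\frac{1}{\lambda^*_s+t}$: the middle block is nonpositive and must be weakened with the smaller reciprocal $\frac{1}{\lambda^*_s+t}$, while the tail block is nonnegative and must be weakened with $\frac{1}{\lambda^*_{s+1}+t}$, so that after the mass-conservation identity both blocks carry the common factor $\sum_{i\in[s+1,n]}\lambda^*_i$ with opposite signs. Care is also needed to confirm that $k$ is the common breakpoint of $\bm\lambda^*$ and $\bm\lambda^*+t\mathbb{I}_s$, that the middle index range $[k+1,s]$ is nonempty (guaranteed by $k\le s-1$), and to dispose of the trivial case $s=n$, where the tail sum is empty and $\Theta^{lb}=0$, so the claim reduces to the dominance $\hat{z}^D(t)\ge\hat{z}(t)$ already established.
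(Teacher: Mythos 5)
Your proposal is correct and follows essentially the same route as the paper's proof: evaluate both objectives at $\bm x^*$, reduce $\psi_s(\bm\lambda^*+t\mathbb{I}_s)$ to the averaged vector at the breakpoint $k$ (the paper's explicit $\bm\beta$), apply the tangent-line inequality $\log a-\log b\ge (a-b)/a$, bound the denominators of the middle and tail blocks by $\lambda^*_s+t$ and $\lambda^*_{s+1}+t$ respectively, and invoke the mass-conservation identity to collapse everything into $\Theta^{lb}$. Your explicit check that the breakpoint of $\bm\lambda^*+t\mathbb{I}_s$ coincides with that of $\bm\lambda^*$, and your remark on the trivial case $s=n$, are welcome clarifications but do not change the argument.
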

\begin{proof}
First, we construct a vector $\bm \beta\in \Re^n_+$ as
\[ \beta_i = \lambda^*_{i}, \forall i\in [k], \ \ \beta_{k+1}=\cdots=\beta_s = \frac{1}{s-k} \sum_{j\in[k+1,n]} \lambda^*_{j},  \ \ \beta_i = 0,\forall i\in[s+1, n],\]
where the integer $0\le k\le s-1$ satisfies $\lambda^*_{k} > \frac{1}{s-k} \sum_{i\in[k+1,n]} \lambda^*_{i} \ge \lambda^*_{k+1}$. Then, we have that
\begin{align*}
 \hat{z}^D(t) - \hat{z}(t) \ge \sum_{i\in [n]}\log(\lambda^*_i+ t)-\sum_{i\in [n]}\log(\beta_i+ t) = \sum_{i\in [k+1,n]}
 \log\left(\frac{\lambda^*_i+ t}{\beta_i+t}\right) \ge \sum_{i\in [k+1,n]}  \left(1-\frac{\beta_i+ t}{\lambda^*_i+t}\right),
\end{align*}
where the first inequality is because $\bm x^*$ is feasible for \ref{eq:ddf}, the first equation is from \Cref{prop:ce} that implies $\log\det (\M_t(\bm x^*))=\psi_s(\bm \lambda^* + t \mathbb{I}_s)$, the second equation is because $\lambda_i=\beta_i$ for all $i\in [k]$, and the last inequality stems from the fact that for any $y>0$, $\log(y) \ge 1-1/y$ must hold. 

Next, we show that  the right-hand expression above is bounded by
\begin{align*}
\sum_{i\in [k+1,n]} \frac{\lambda^*_i-\beta_i}{\lambda^*_i+t}& = \sum_{i\in [k+1,s]}  \frac{\lambda^*_i-\beta_i}{\lambda^*_{i}+t} + \sum_{i\in [s+1,n]} \frac{\lambda^*_i}{\lambda^*_i+t} \ge \sum_{i\in [k+1,s]}  \frac{\lambda^*_i-\beta_i}{\lambda^*_{s}+t} + \sum_{i\in [s+1,n]} \frac{\lambda^*_i}{\lambda^*_{s+1}+t} \\
&\ge\left(\frac{1}{\lambda^*_{s+1}+t}-\frac{1}{\lambda^*_{s}+t} \right) \sum_{i\in [s+1,n]} \lambda^*_i \ge 0,
\end{align*}
where the first equation is by the definition of $\bm \beta$ and the inequalities stem from the facts that $\beta_i = \frac{1}{s-k} \sum_{j\in[k+1,n]} \lambda^*_{j} \ge \lambda^*_i$ for all $i\in [k+1, s]$, $\sum_{i\in [k+1,s]} (\lambda^*_i -\beta_i) + \sum_{i\in [s+1,n]} \lambda^*_i=0$, and $\lambda^*_{s+1}\le \lambda^*_{s}$.
We thus conclude the proof. 
\qed
\end{proof}

We close this subsection by discussing how the lower bound $\Theta^{lb}$ in \Cref{them:ddf} varies with the condition number of $\bm C$.
 Following \Cref{them:bound}, we note  that \ref{eq:upper} with $t=\lambda_{\min}(\bm C)$ may be more effective  at improving \ref{eq:fact}  when the condition number of $\bm C$ is small. Conversely, \Cref{them:ddf} suggests that {the improvement of \ref{eq:upper}  over \ref{eq:ddf} becomes notable} given a large condition number, as detailed below. Our numerical studies also demonstrate that \ref{eq:upper} is significantly tighter than \ref{eq:ddf} when its improvement over \ref{eq:fact} is minor, and vice versa. 
  
As seen in \Cref{them:ddf}, the lower bound $\Theta^{lb}$ decreases as $\lambda_{\min}(\bm C)$ increases. Besides, the  bound $\Theta^{lb}$ generally increases as we scale up all the eigenvalues $\bm \lambda^*$ of $\M_t(\bm x^*)$. Note that these eigenvalues  are bounded by  $\lambda_{\max}(\bm C)$ based on \Cref{remark:decom}. It is possible, therefore, that there exists a positive relationship between $\Theta^{lb}$ and $\lambda_{\max}(\bm C)/\lambda_{\min}(\bm C)$. Thus,  a large condition number of $\bm C$ is desirable for achieving  a notable improvement of \ref{eq:upper}  over \ref{eq:ddf} at $t=\lambda_{\min}(\bm C)$, as shown in \Cref{sec:num}.

\subsection{A primal certificate for variable fixing using \ref{eq:upper}}\label{subsec:fix}
Variable fixing has been extensively
studied for \ref{mesp} in the context of various concave relaxation bounds (see, e.g., \citealt{anstreicher2001maximum,anstreicher2018maximum,anstreicher2020efficient,chen2023computing} and references therein). It is often used to accelerate the computation of exact solution methods \citep{li2024d}.
However, previous research has focused on deriving dual certificates, which requires computing  (near-)optimal dual solutions of those concave relaxations. 
By contrast, this subsection introduces a primal certificate for variable fixing using the property of concave functions, independent of the dual problem of \ref{eq:upper}.

To begin, we need an expression for the subgradient of the objective function of \ref{eq:upper}. Note that  $\widehat{\Phi}_s(\M_t(\bm x); t)$ is a spectral function that only depends on the eigenvalues of $\M_t(\bm x)$. Based on the spectral property and the subgradient of $\psi_s$ in \Cref{remark:grad}, \cite{li2024best} derived the subgradient of $\widehat{\Phi}_s(\M_t(\bm x); t)$ over $\bm x$ at $t=0$ (see also \citealt{chen2023computing}). Their result can directly extend to any $t$, $0\le t\le \lambda_{\min}(\bm C)$.
\begin{remark}\label{remark:subgrad}
For any feasible solution $\bm x$ of \ref{eq:upper} and any $t$, $0\le t\le \lambda_{\min}(\bm C)$,  suppose that $\M_t(\bm x)=\bm Q \Diag(\bm \lambda) \bm Q^{\top}$ is the eigen-decomposition of $\M_t(\bm x)$, where $\lambda_1\ge\cdots\ge \lambda_n\ge 0$ are the eigenvalues of $\M_t(\bm x)$. Then, a subgradient of $\widehat{\Phi}_s(\M_t(\bm x); t)$ at $\bm x$ can be defined as 
\begin{align*}
\frac{\partial \widehat{\Phi}_s\left(\M_t({\bm x}); t\right) }{\partial x_i} = [\bm a_i(t)]^{\top} \bm Q \Diag(\bm g) \bm Q^{\top} \bm a_i(t), \ \ \forall i\in [n],
\end{align*}
where $\bm g$ is a subgradient of the function $\psi_s$ at $\bm \lambda$, as defined in \Cref{remark:grad}. %
\end{remark}

\begin{theorem}\label{them:fix}
For any feasible solution $\tilde{\bm x}$  of \ref{eq:upper} at $t=\lambda_{\min}(\bm C)$, let $\tilde{\bm g}$ be a subgradient of the function $\widehat{\Phi}_s$ at $\tilde{\bm x}$, as defined in \Cref{remark:subgrad}. Then, any optimal solution $\bm x^*$ of \ref{mesp} must satisfy
\begin{align*}
   & x_i^* = 1 \ \ \text{if} \ \ \tilde g_i - \tilde g_{s+1}^{\downarrow} > UB - LB, \ \ \forall i\in [n], \text{ and } \\
   & x_i^*=0 \ \ \text{if} \ \ \tilde g_{s}^{\downarrow} - \tilde g_i > UB - LB, \ \ \forall i\in [n],
\end{align*}
where $UB  = \widehat{\Phi}_s\left(\M_{\lambda_{\min}(\bm C)} (\tilde{\bm x}); \lambda_{\min}(\bm C)\right) - \tilde{\bm g}^{\top} \tilde{\bm x} + \sum_{i\in [s]} \tilde g^{\downarrow}_i$ and $LB$ is a lower bound of \ref{mesp} returned by approximation algorithms.
\end{theorem}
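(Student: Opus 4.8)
The plan is to exploit the concavity of the objective $\widehat{\Phi}_s(\M_t(\bm x);t)$ at $t=\lambda_{\min}(\bm C)$ together with the subgradient inequality to obtain a linear overestimator of $\hat z(\lambda_{\min}(\bm C))$, and then to combine this overestimator with the known lower bound $LB$ to derive the fixing rules. First I would write the subgradient inequality: since $\widehat{\Phi}_s$ is concave in $\bm x$, for any feasible $\bm x^*$ of \ref{mesp} (which is also feasible for \ref{eq:upper}) we have
\begin{align*}
\widehat{\Phi}_s\left(\M_{\lambda_{\min}(\bm C)}(\bm x^*); \lambda_{\min}(\bm C)\right) \le \widehat{\Phi}_s\left(\M_{\lambda_{\min}(\bm C)}(\tilde{\bm x}); \lambda_{\min}(\bm C)\right) + \tilde{\bm g}^{\top}(\bm x^* - \tilde{\bm x}).
\end{align*}
Because $\bm x^*$ is optimal (hence binary feasible) for \ref{mesp}, the left-hand side equals $z^* \ge LB$ by \Cref{cor:mesp1} and equation \eqref{eq:phi}, so rearranging gives $\tilde{\bm g}^{\top} \bm x^* \ge LB - \widehat{\Phi}_s(\M_{\lambda_{\min}(\bm C)}(\tilde{\bm x}); \lambda_{\min}(\bm C)) + \tilde{\bm g}^{\top}\tilde{\bm x}$.

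Next I would bound the term $\tilde{\bm g}^{\top} \bm x^*$ over the feasible set of \ref{mesp}. Since $\bm x^*$ is binary with exactly $s$ ones, the largest possible value of $\tilde{\bm g}^{\top}\bm x^*$ is $\sum_{i\in[s]}\tilde g_i^{\downarrow}$, achieved by putting the $s$ ones on the largest entries of $\tilde{\bm g}$; this is precisely the quantity appearing in the definition of $UB$. Thus $UB = \sum_{i\in[s]}\tilde g_i^{\downarrow} - \widehat{\Phi}_s(\M_{\lambda_{\min}(\bm C)}(\tilde{\bm x}); \lambda_{\min}(\bm C)) + \tilde{\bm g}^{\top}\tilde{\bm x}$ is a valid upper bound on $z^*$ (indeed it is an upper bound on $\hat z(\lambda_{\min}(\bm C))$, consistent with the overestimator interpretation). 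For the fixing logic, I would reason by contradiction: to force $x_i^*=1$, suppose instead $x_i^*=0$. Then the best attainable value of $\tilde{\bm g}^{\top}\bm x^*$ over binary $\bm x^*$ with $\sum x_j^* = s$ and $x_i^*=0$ is at most $\sum_{j\in[s]}\tilde g_j^{\downarrow} - \tilde g_i + \tilde g_{s+1}^{\downarrow}$ (we must drop the contribution $\tilde g_i$ we could have had and instead include at best the $(s+1)$-th largest). Feeding this into the rearranged subgradient inequality yields $z^* \le UB - (\tilde g_i - \tilde g_{s+1}^{\downarrow})$; if $\tilde g_i - \tilde g_{s+1}^{\downarrow} > UB - LB$ this gives $z^* < LB$, contradicting $z^* \ge LB$. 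The symmetric argument with "$x_i^*=1$" forced and the swap bounded by $\tilde g_s^{\downarrow} - \tilde g_i$ establishes the second rule.

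The main obstacle I anticipate is making the combinatorial swap-bounds fully rigorous, in particular justifying the claim that among binary vectors summing to $s$ with $x_i^*$ pinned to $0$ (resp.\ $1$), the maximum of $\tilde{\bm g}^{\top}\bm x^*$ drops by exactly at least $\tilde g_i - \tilde g_{s+1}^{\downarrow}$ (resp.\ $\tilde g_s^{\downarrow} - \tilde g_i$) relative to the unconstrained maximum $\sum_{j\in[s]}\tilde g_j^{\downarrow}$. This requires a careful exchange argument handling the cases where $i$ is or is not already among the top-$s$ indices of $\tilde{\bm g}$, and accounting for ties in the sorted order; I would phrase it as: removing index $i$ from consideration forces the selection to substitute its value with at most the largest available value outside the chosen set, which is bounded by $\tilde g_{s+1}^{\downarrow}$, and symmetrically for the forced-inclusion case the displaced index contributes at least $\tilde g_s^{\downarrow}$. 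Everything else — the subgradient inequality, the identity $z^*\ge LB$, and the definition of $UB$ — is routine once \Cref{remark:subgrad} and \Cref{cor:mesp1} are invoked.
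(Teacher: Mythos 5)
Your proposal is correct and follows essentially the same route as the paper: a subgradient (first-order) overestimator of the concave objective combined with the exact combinatorial maximum of $\tilde{\bm g}^{\top}\bm x$ over binary vectors with $\sum_i x_i = s$ and $x_i$ pinned to $0$ or $1$, yielding the same quantity $UB$ and the same contradiction with $z^*\ge LB$. The only cosmetic difference is that you apply the subgradient inequality directly at the binary optimum $\bm x^*$ (using equation \eqref{eq:phi} to identify $\widehat{\Phi}_s$ with $z^*$ there), whereas the paper bounds the optimal value $\hat z_i^0(t)$ of the restricted relaxation and then argues no optimal solution can live in the restricted set; the exchange-argument case analysis you flag as the remaining obstacle is exactly the two-case computation the paper carries out, and it is unproblematic because the fixing condition already forces $\tilde g_i > \tilde g_{s+1}^{\downarrow}$ (resp.\ $\tilde g_i < \tilde g_{s}^{\downarrow}$), which resolves the tie-handling concern.
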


\begin{proof}
For $t=\lambda_{\min}(\bm C)$, by the concavity of  $\widehat{\Phi}_s$, we have that
\begin{align*}
\widehat{\Phi}_s(\M_t({\bm x}); t) \le \widehat{\Phi}_s(\M_t(\tilde{\bm x}); t) + \tilde{\bm g}^{\top} (\bm x- \tilde{\bm x}) 
\end{align*}
for all $\bm x\in[0,1]^n$ with the cardinality $s$. Maximizing the above inequality over $\bm x$ results in
\begin{equation}\label{ineq2}
    \begin{aligned}
        \hat{z}(t) &= \max_{\bm x\in [0,1]^n} \bigg\{\widehat{\Phi}_s(\M_t({\bm x}); t):\sum_{i\in [n]} x_i=s \bigg\} \le \widehat{\Phi}_s(\M_t(\tilde{\bm x}); t) -  \tilde{\bm g}^{\top}  \tilde{\bm x}  + \max_{\bm x\in [0,1]^n}\bigg\{ \tilde{\bm g}^{\top} \bm x :\sum_{i\in [n]} x_i=s \bigg\} \\
        &= \widehat{\Phi}_s(\M_t(\tilde{\bm x}); t) - \tilde{\bm g}^{\top} \tilde{\bm x} + \max_{\bm x\in \{0,1\}^n}\bigg\{\tilde{\bm g}^{\top} \bm x:\sum_{i\in [n]} x_i=s \bigg\} = UB,
    \end{aligned}
\end{equation}
where the second equation follows from the linearity of the objective and the last equation is because the maximization problem attains the optimal value $\sum_{i\in [s]} \tilde g_i^{\downarrow}$. 

We split the following proof into two parts, which fix a variable to 1 and 0, respectively.
\begin{enumerate}
    \item[(i)]  For each $i\in [n]$, we assume that $x_i=0$. If \ref{eq:mesp2} strictly decreases when restricted to satisfying the constraint $x_i=0$, then no optimal solution of \ref{eq:mesp2} can satisfy $x_i=0$. Therefore, $x_i$ must be equal to 1 at optimality. Next, our goal is to provide a sufficient condition under which \ref{eq:mesp2} with $x_i=0$ is strictly less than \ref{eq:mesp2}.  
Suppose that $\hat{z}_i^0(t)$ denotes the optimal value \ref{eq:upper} with the constraint $x_i=0$. Following the analysis in \eqref{ineq2}, we have that
\begin{align*}
\hat{z}_i^0(t) \le \widehat{\Phi}_s(\M_t(\tilde{\bm x}); t) -  \tilde{\bm g}^{\top}\tilde{\bm x}  + \max_{\bm x\in \{0,1\}^n}\bigg\{ \tilde{\bm g}^{\top} \bm x:\sum_{j\in [n]} x_j=s, x_i=0 \bigg\}.
\end{align*}
Enforcing the constraint $x_i=0$ leads to
\begin{align*}
\max_{\bm x\in \{0,1\}^n}\bigg\{ \tilde{\bm g}^{\top} \bm x:\sum_{j\in [n]} x_j=s, x_i=0 \bigg\} = \begin{cases}
   \sum_{j\in [s]} \tilde g_j^{\downarrow}, & \text{if } \tilde g_i\le \tilde g^{\downarrow}_{s+1}; \\
    \sum_{j\in [s]} \tilde g_j^{\downarrow} + \tilde g^{\downarrow}_{s+1}- \tilde g_i , & \text{if } \tilde g_i\ge \tilde g^{\downarrow}_{s}.
\end{cases}.
\end{align*}

If $UB + \tilde g_{s+1}- \tilde g_i< LB$ holds, we have that $\hat{z}_i^0(t) < LB\le z^*$ based on the results above.
Given that $\hat{z}_i^0(t)$ serves as an upper bound of \ref{eq:mesp2} with the constraint $x_i=0$,  $x_i$ must be equal to 1 at optimality of \ref{eq:mesp2}, as analyzed previously.

\item[(ii)] 

For each $i\in [n]$, suppose $x_i=1$ in \ref{mesp}. Then, we have that
\begin{align*}
   \max_{\bm x\in \{0,1\}^n}\bigg\{ \tilde{\bm g}^{\top} \bm x:\sum_{j\in [n]} x_j=s, x_i=1 \bigg\}    = \begin{cases}
   \sum_{j\in [s]} \tilde g_j^{\downarrow}, & \text{if } \tilde g_i\ge \tilde g^{\downarrow}_{s}; \\
    \sum_{j\in [s]} \tilde g_j^{\downarrow} - \tilde g^{\downarrow}_{s}+\tilde g_i , & \text{if } \tilde g_i\le \tilde g^{\downarrow}_{s+1}.
\end{cases}.
\end{align*}
The rest of the proof simply follows that of Part (i) and is thus omitted. \qed
\end{enumerate}
\end{proof}
We make the following remarks about \Cref{them:fix}.
\begin{enumerate}
    \item[(i)] The main advantage of our dual-free variable fixing in \Cref{them:fix} is its ease of implementation- it can be easily integrated into any first-order algorithm for solving \ref{eq:upper}. At each iteration, to fix variables, it suffices to sort the elements of the subgradient; 
    \item[(ii)] Our variable fixing conditions in \Cref{them:fix}  align well with the cardinality constraint in \ref{mesp}. Given that $UB-LB\ge 0$ and the subgradient vector has at most $s$ entries larger than its $s+1$ largest entry, we can fix up to $s$ variables to 1. Likewise, we can fix at most $n-s$ variables to  0; and
    \item[(iii)] Our theoretical analysis of \Cref{them:fix} builds on a feasible solution \ref{eq:upper}. In fact, it can be directly generalized to other upper bounds based on concave relaxations. For example, we evaluate the  variable-fixing capacity of \ref{eq:fact} and \ref{eq:ddf} based on our primal certificate in Subsection \ref{subsec:com_ddf}.
\end{enumerate}

\section{Numerical experiments}\label{sec:num} 
In this section, we numerically compare \ref{eq:upper} with the existing upper bounds of \ref{mesp} and verify its dominance over \ref{eq:fact} and \ref{eq:ddf} with varying-scale instances. As  both \ref{eq:upper} and \ref{eq:ddf} are decreasing with $t$, we set $t=\lambda_{\min}(\bm C)$ for them throughout this section. Besides, we use the Frank-Wolfe algorithm to compute the upper bounds. To obtain a high-quality lower bound of \ref{mesp}, we employ the local search algorithm proposed by \cite{li2024best} that has returned an optimal solution to \ref{mesp} on three benchmark data sets. 
All the experiments
are conducted in Python 3.6 with calls to Gurobi 9.5.2 and MOSEK 10.0.29 on a PC with 10-core CPU, 16-core GPU, and 16GB of memory. 

\subsection{MESP: Three benchmark data sets}\label{subsec:mesp}
To evaluate \ref{eq:upper}, we first consider three benchmark covariance matrices with $n=63, 90, 124$. Their corresponding condition numbers are 48.42, 200.45, and 78340.48, respectively. Both  $n=63$ and $n=124$ instances have been repeatedly used in the literature on \ref{mesp}, which are collected from an application to re-designing an environmental monitoring network \citep{guttorp1993using}. Recently, \cite{anstreicher2020efficient} considered the $n=90$ instance for \ref{mesp} based on temperature data from monitoring stations in the Pacific Northwest of the United States. \Cref{fig_comp_upper,fig_comp_upper_90,fig_comp_upper_124} display the gaps between several upper bounds and a lower bound generated by the local search algorithm. We note that the gap values for Fact, Linx, and Mix-LF are taken from the computational results of \cite{chen2023computing}.
For each benchmark instance, gap values are given for $s\in [2,n-1]$. 
Their computational time is negligible (i.e., less than one minute), so we do not report and compare them.

 \Cref{63_gap,90_gap} show that \ref{eq:upper} gives the best upper bound for \ref{mesp} on the $n=63$ and $n=90$ data sets.  Surprisingly, our \ref{eq:upper} reduces the integrality gaps effectively for the most difficult instances, with intermediate values of $s$,
where \ref{eq:fact} and Linx are nearly identical. Consistent with our analysis of \Cref{them:bound}, \ref{eq:upper} is only a bit better than \ref{eq:fact} for $n=124$, as displayed in \Cref{124_gap}.
This is because the $n=124$ covariance matrix has a huge condition number. We present the comparison between \ref{eq:upper} and \ref{eq:ddf} in different figures, since \ref{eq:ddf} often results in much worse integrality gaps. We see from \Cref{63_gap_ddf,90_gap_ddf,124_gap_ddf} that \ref{eq:upper} is much tighter than \ref{eq:ddf} for all the test cases. As $s$ approaches $n-1$, the gaps produced by \ref{eq:upper} and \ref{eq:ddf} become nearly identical. It is interesting to observe that the difference of gaps between \ref{eq:upper} and \ref{eq:ddf} is increasing with the condition number of $\bm C$. Especially for $n=124$, we observe a significant reduction in the gaps in \Cref{124_gap_ddf}.
These comparison results parallel our theoretical findings in Subsection \ref{subsec:ddf}.

We verify the enhanced capacity of \ref{eq:upper} to fix variables for \ref{mesp} in \Cref{63_fix,90_fix,124_fix}, when compared to \ref{eq:upper} and \ref{eq:ddf}. Note that we employ the primal conditions in \Cref{them:fix} to check whether to fix a variable. For all other bounds, their fixed variables are sourced from \citet[section 3]{chen2023computing}, using the dual certificates.
For  $n=64$ and $n=90$, we see that \ref{eq:upper} fixes many variables for large values of $s$, while \ref{eq:fact} fails to fix any variables at all. For $n=124$, \ref{eq:upper} still leads to more variables fixed than \ref{eq:fact}, even at points in which they have very similar gaps. In addition, we observe a slightly different comparison result between  \ref{eq:upper} and Linx for $n=90$ and $n=124$.  Specifically,  for small values of $s$, \ref{eq:upper} has a smaller integrality gap and a stronger fixing power  than the Linx bound, whereas the reverse holds when $s$ is large.

\vspace{-0.5em}
\begin{figure}[ht]
	\centering
 	\hspace{-1em}
	\subfigure[Gaps ] {\label{63_gap}
	\includegraphics[width=0.31\textwidth]{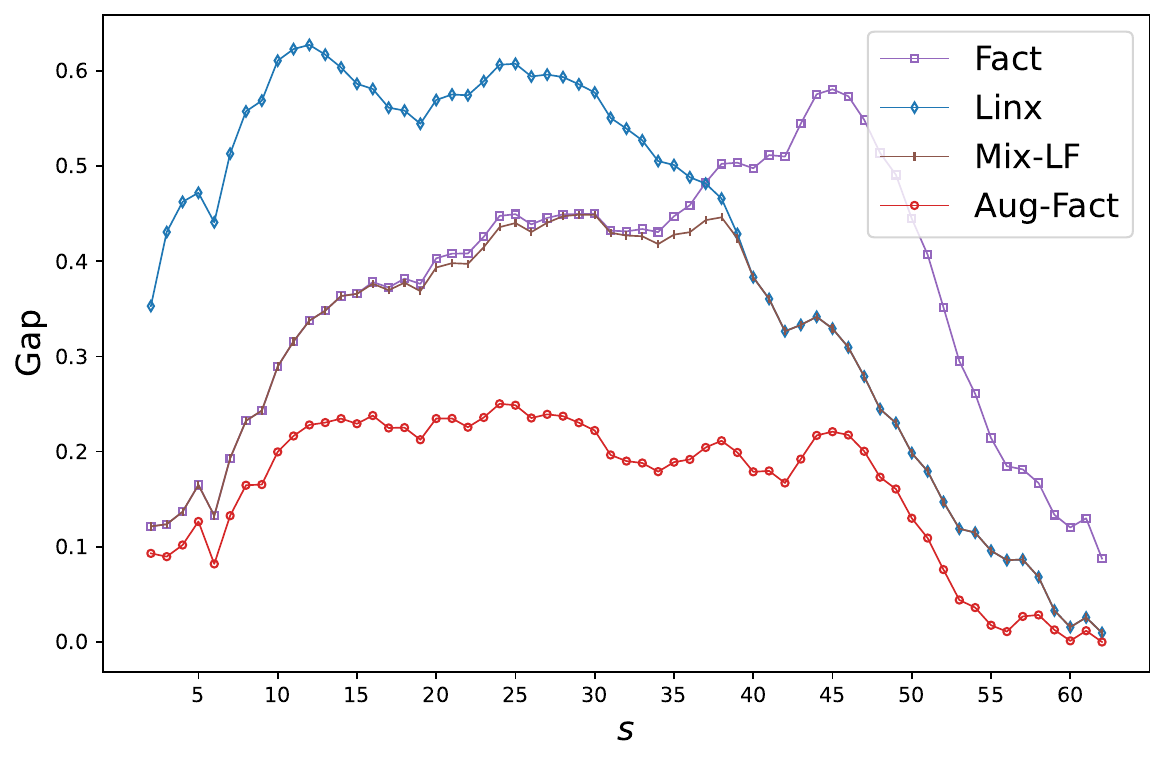}
	}
	\subfigure[Gaps] {\label{63_gap_ddf}
		\centering
\includegraphics[width=0.31\textwidth]{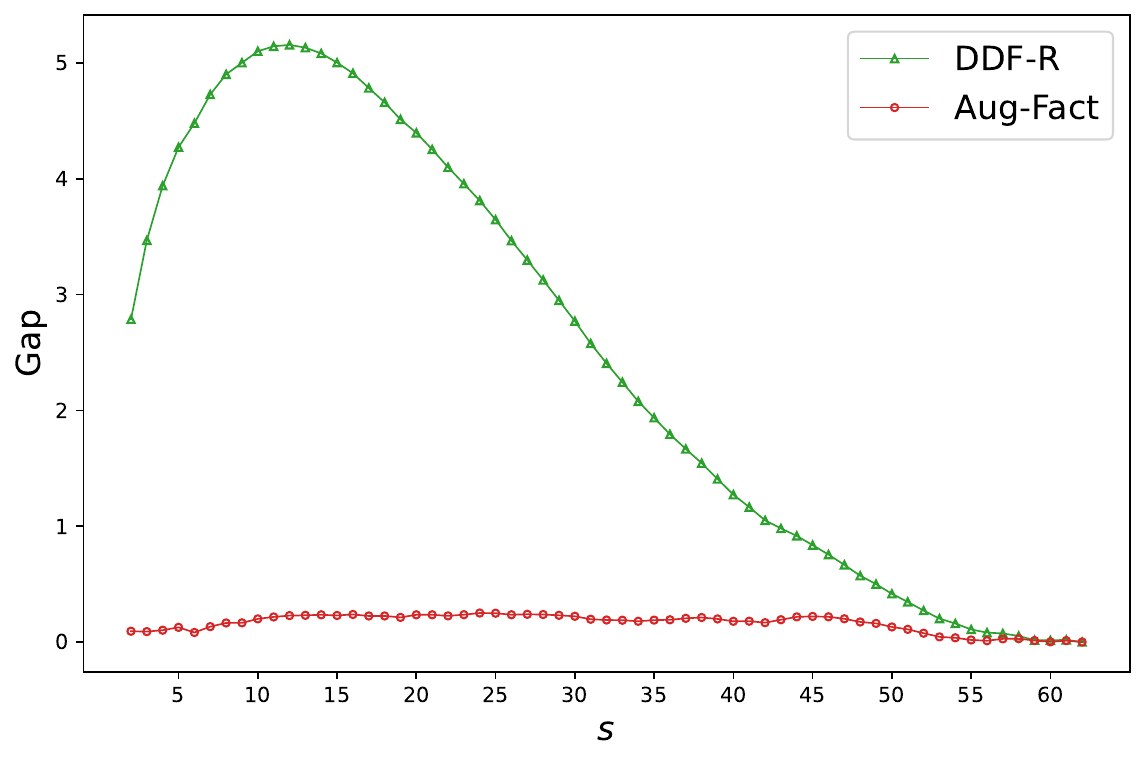}
	}
 	\subfigure[Number of variables fixed] {\label{63_fix}
		\centering
	\includegraphics[width=0.31\textwidth]{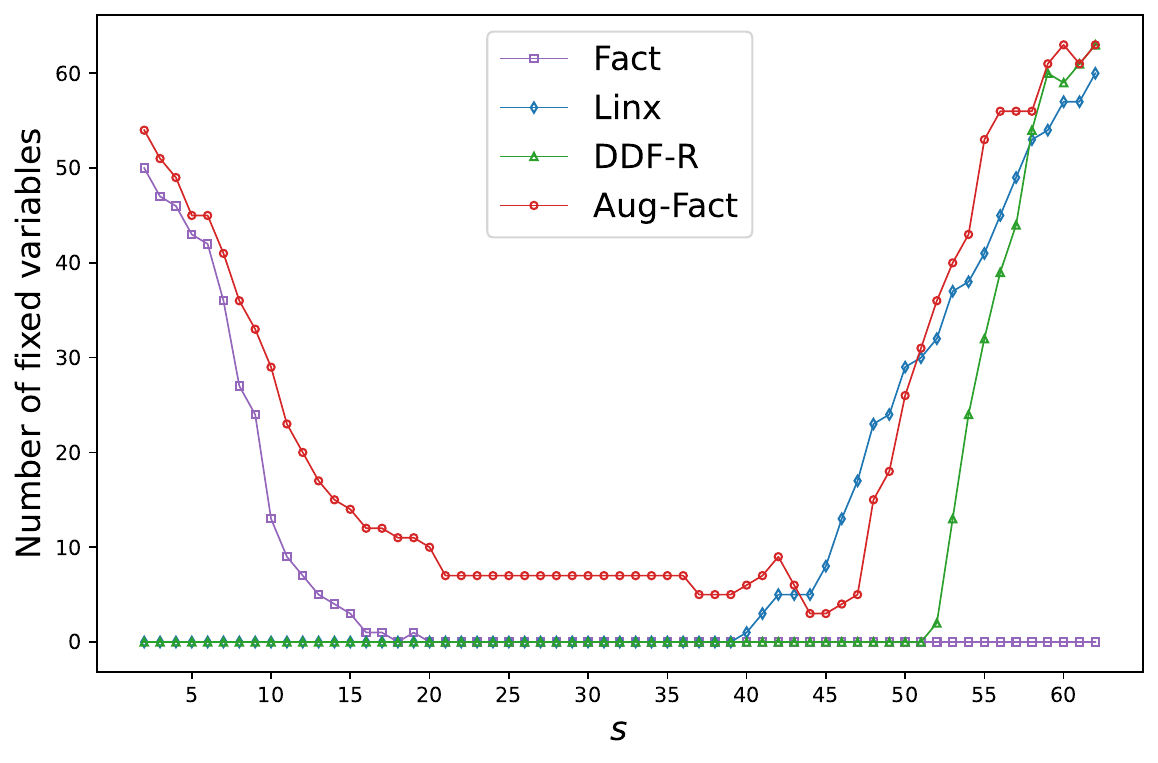}
	}
	\caption{$n=63$ with the condition number $\lambda_{\max}(\bm C)/ \lambda_{\min}(\bm C) = 48.42$ }\label{fig_comp_upper}
 \vspace{-1.5em}
\end{figure}

\begin{figure}[hbtp]
	\centering
	\hspace{-1em}
	\subfigure[Gaps] {\label{90_gap}
		\centering
  \includegraphics[width=0.32\textwidth]{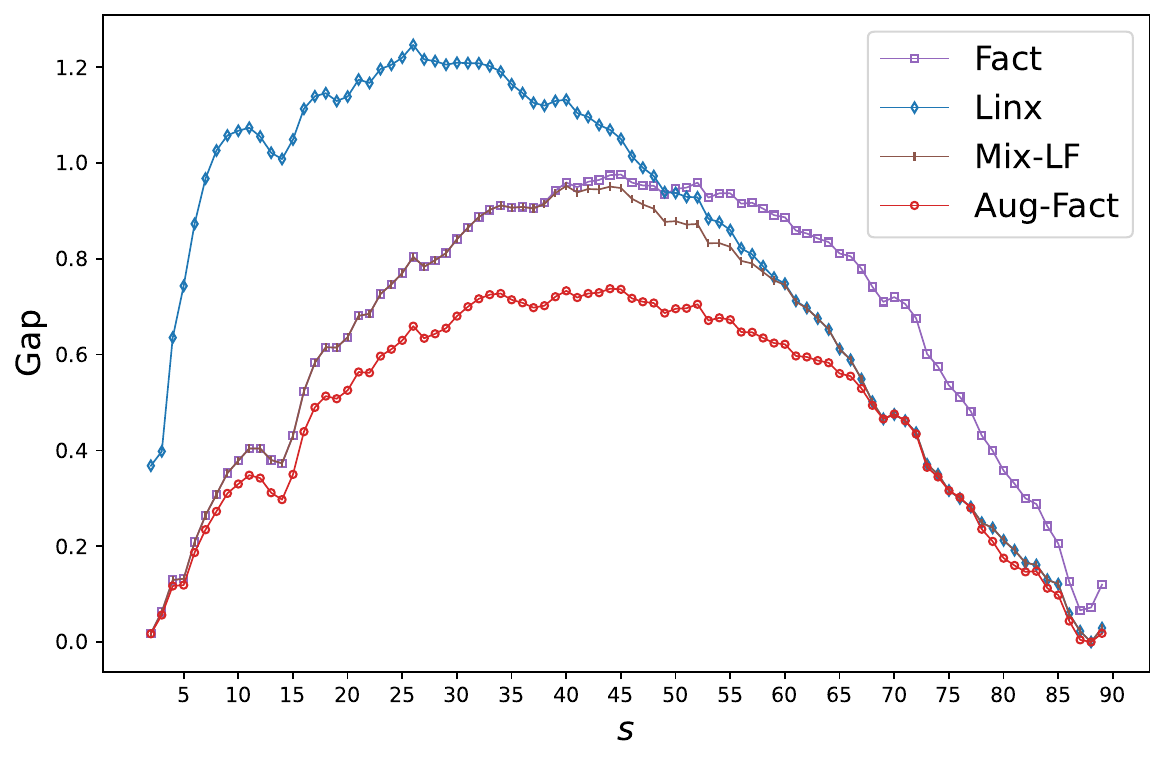}
	}
	\subfigure[Gaps] {\label{90_gap_ddf}
		\centering
  \includegraphics[width=0.32\textwidth]{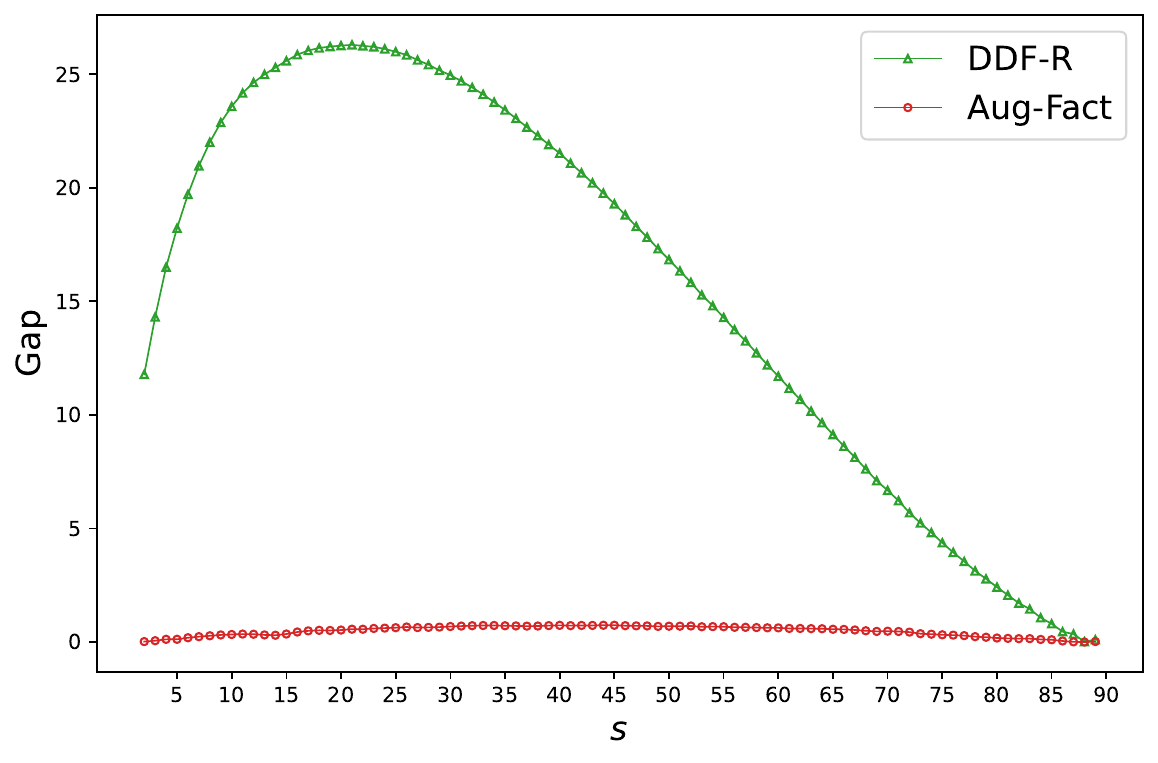}
	}
	\subfigure[Number of variables fixed] {\label{90_fix}
		\centering
	\includegraphics[width=0.32\textwidth]{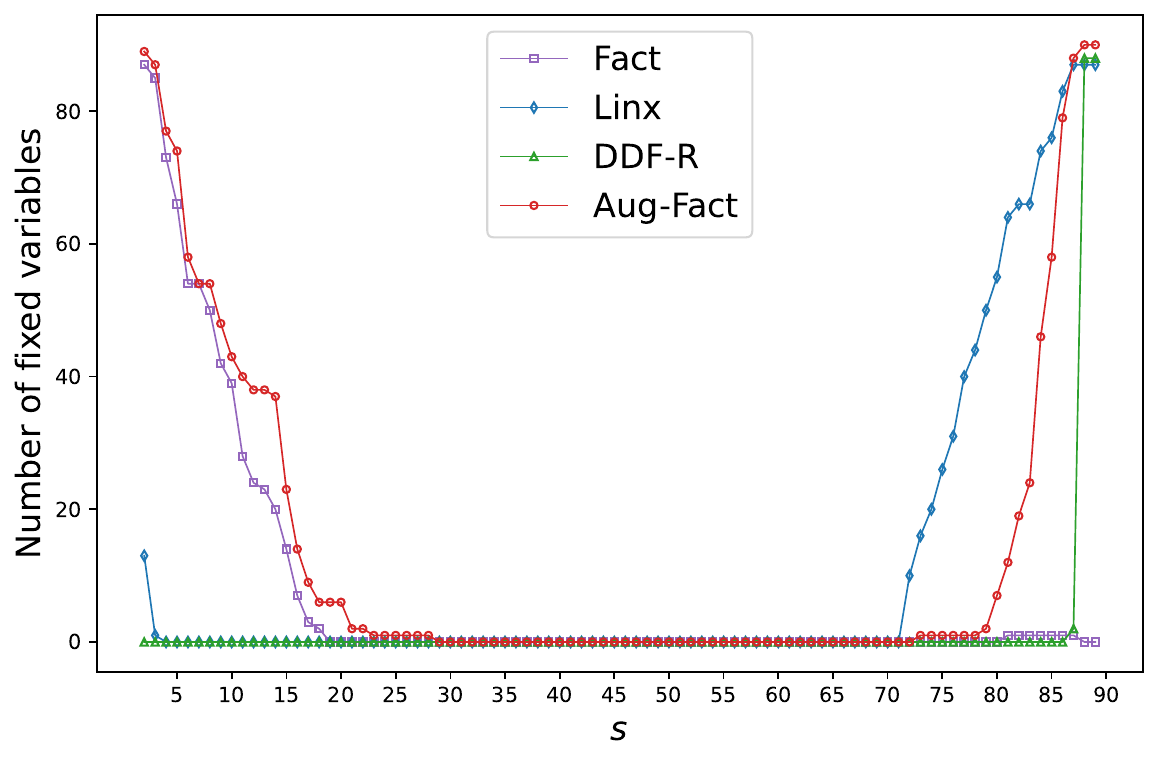}
	}
	\caption{$n=90$ with with the condition number $\lambda_{\max}(\bm C)/\lambda_{\min}(\bm C) = 200.45$  }\label{fig_comp_upper_90}
 \vspace{-1.5em}
\end{figure}

\begin{figure}[hbtp]
	\centering
 	\hspace{-1em}
	\subfigure[Gaps] {\label{124_gap}
		\centering
  \includegraphics[width=0.32\textwidth]{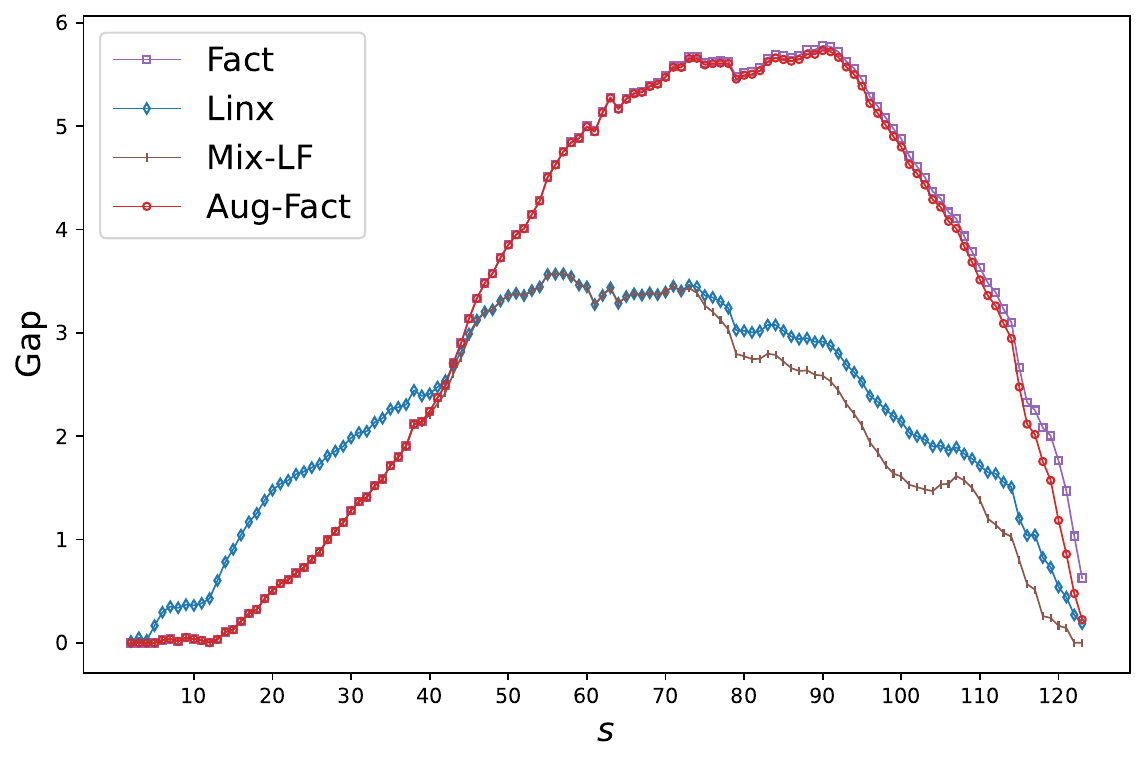}
	}
	\subfigure[Gaps] {\label{124_gap_ddf}
		\centering
  \includegraphics[width=0.32\textwidth]{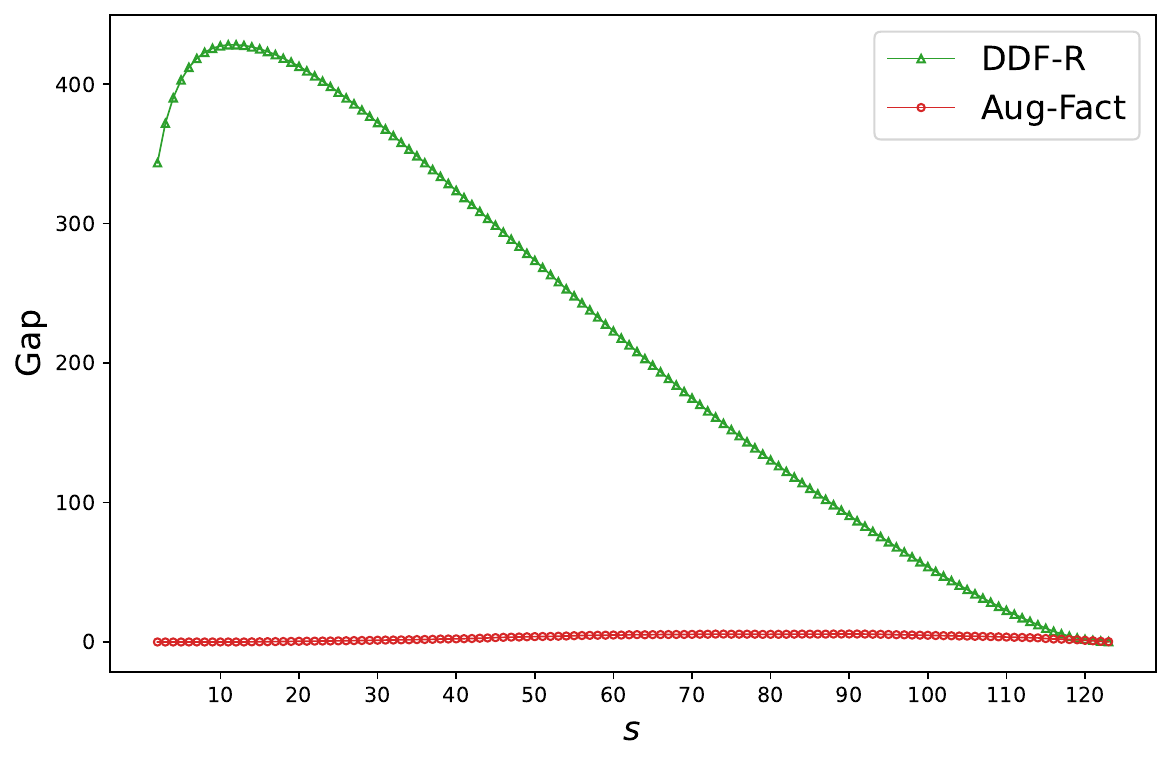}
	}
	\subfigure[Number of variables fixed] {\label{124_fix}
		\centering
	\includegraphics[width=0.32\textwidth]{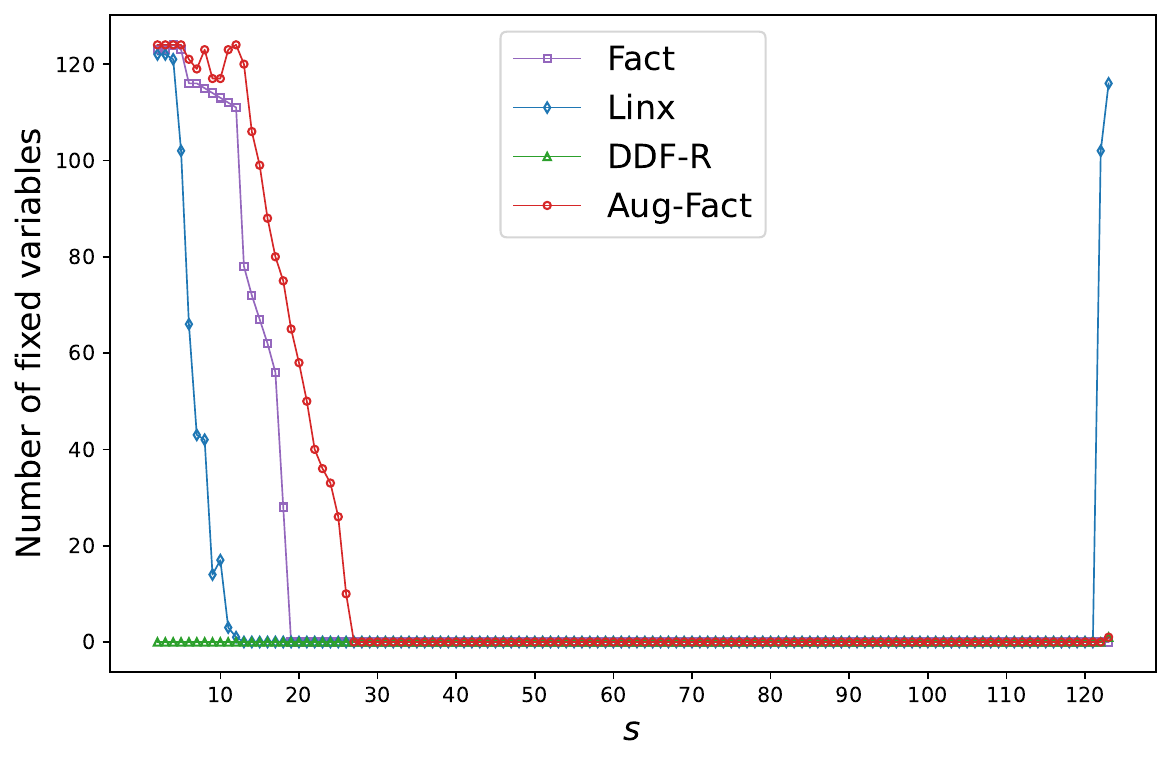}
	}
	\caption{$n=124$ with the condition number  $\lambda_{\max}(\bm C)/\lambda_{\min}(\bm C) = 78340.48$ }\label{fig_comp_upper_124}
 \vspace{-1em}
\end{figure}

\subsection{DDF: IEEE 118- and 300-bus data sets}\label{subsec:com_ddf}
This subsection tests the covariance matrices generated from two real-world IEEE data sets \citep{aminifar2009contingency}, that have been extensively applied to the phasor measurement unit (PMU) placement problem in the literature of DDF \citep{li2011phasor}.
Following the work of \cite{li2024d}, for the IEEE 118 (or 300)-bus data set, we generate two positive definite covariance matrices of $n=117$ (or $299$) based on large and small PMU standard deviations, respectively. PMU standard deviations represent different levels of measurement accuracy, leading to the covariance matrices with different condition numbers, as presented in \Cref{fig_118_large,fig_118_small,fig_300_large,fig_300_small}. Following \cite{li2024d}, for the IEEE 118-bus instance, we consider the cases  where $s\in \{10, 15, \cdots, 105\}$ to evaluate \ref{eq:upper}, and for the IEEE 300-bus instance, we set $s\in \{10, 20, \cdots, 290\}$.

First, \Cref{fig_118_large,fig_118_small,fig_300_large,fig_300_small} show that the gaps between \ref{eq:fact} and \ref{eq:ddf}, obtained from \citet[section 5]{li2024d}, are not comparable.
We also report the number of variables fixed by them using the proposed variable-fixing logic in \Cref{them:fix}. In \Cref{118_gap_fact,300_gap_large_fact,300_gap_small_fact}, \ref{eq:upper} significantly reduces the gaps of \ref{eq:fact}. However, in \Cref{118_gap_small}, the two bounds are pretty close when dealing with a vast condition number of $\bm C$. Conversely, the reduced gaps achieved by \ref{eq:upper} over \ref{eq:ddf} become most significant in this context, as seen in \Cref{118_gap_small_ddf}. In terms of the variable-fixing capacity, \ref{eq:upper} wins on nearly all test instances, with only the value of $s=25$ in \Cref{118_gap_small_fix} being an exception. 

 \vspace{-1.5em}
\begin{figure}[hbtp]
	\centering
	\hspace{-1.5em}
	\subfigure[ Gaps of \ref{eq:upper} and \ref{eq:fact}] {\label{118_gap_fact}
\includegraphics[width=0.35\textwidth]{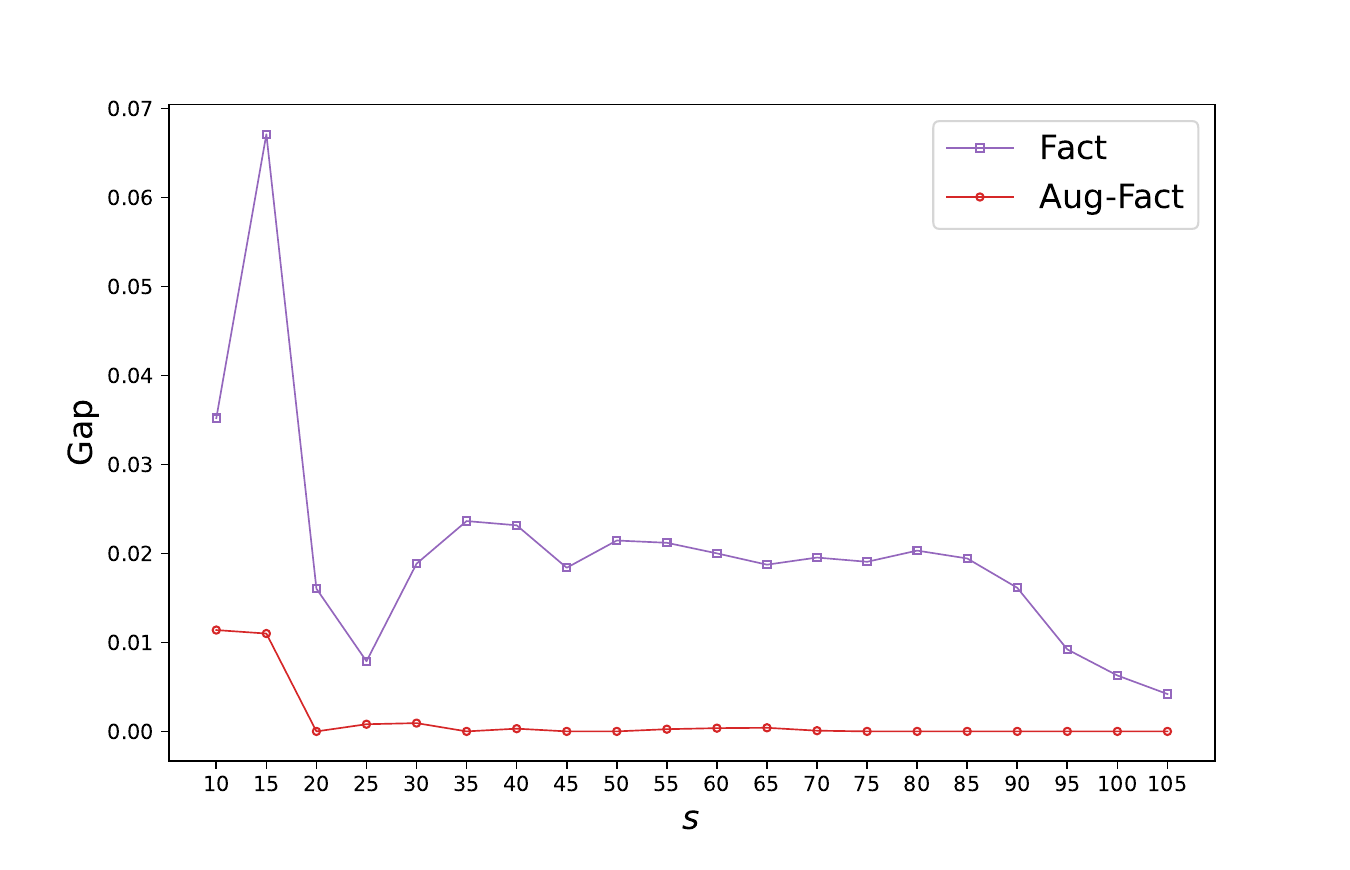}
	}
  \hspace{-1.8em}
	\subfigure[Gaps of \ref{eq:upper} and \ref{eq:ddf}] {\label{118_gap_large_ddf}
		\centering
\includegraphics[width=0.35\textwidth]{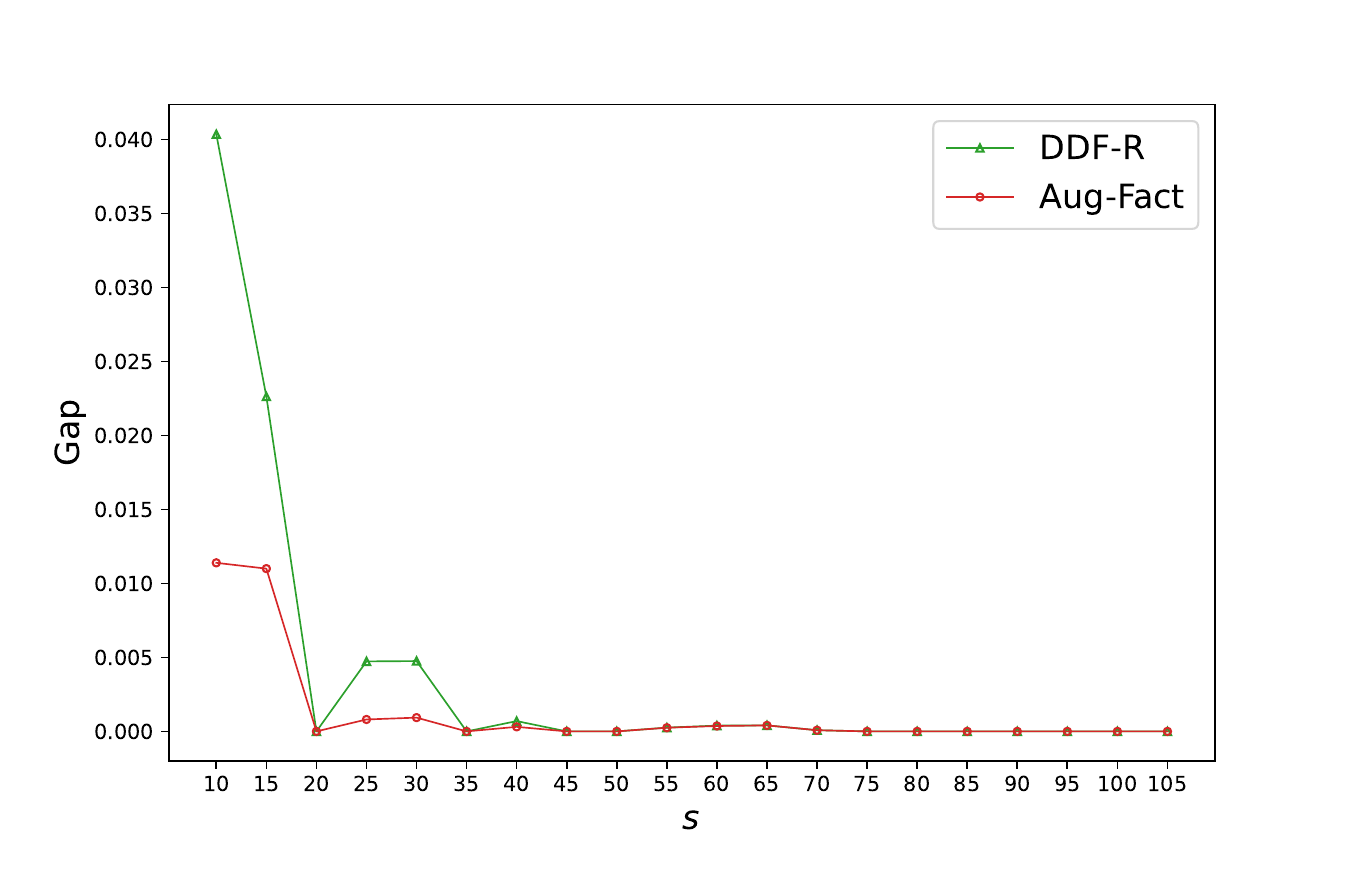}
	}
  \hspace{-1.8em}
 	\subfigure[Number of variables fixed] {\label{118_gap_large_fix}
		\centering
\includegraphics[width=0.35\textwidth]{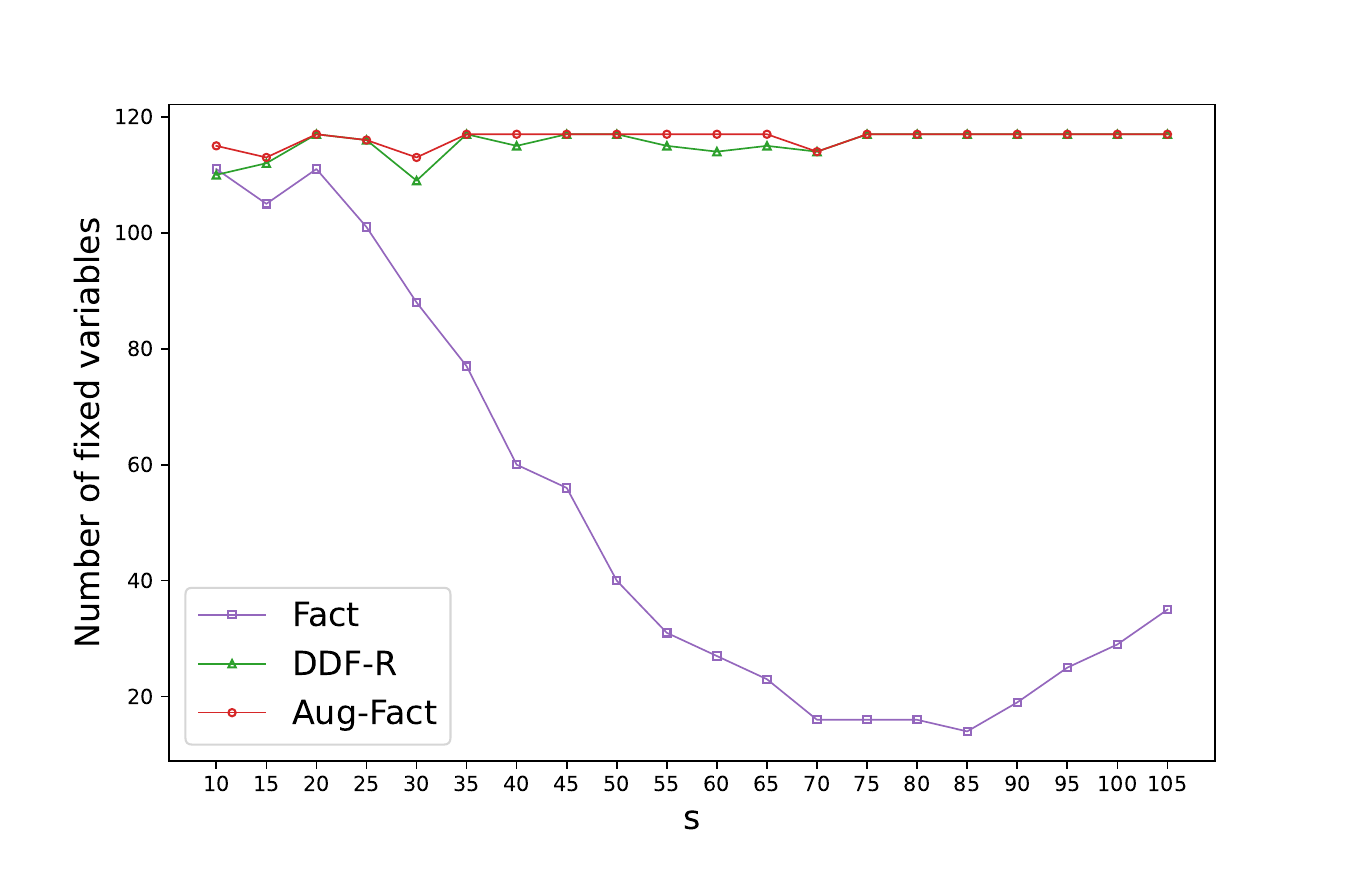}
	}
	\caption{IEEE $118$-bus instance and large PMU standard deviations with $\lambda_{\max}(\bm C)/\lambda_{\min}(\bm C)=313.27$ }\label{fig_118_large}
 	\vspace{-2em}
\end{figure}

\begin{figure}[hbtp]
	\centering
 	\hspace{-1.5em}
	\subfigure[Gaps of \ref{eq:upper} and \ref{eq:fact}] {\label{118_gap_small}
\includegraphics[width=0.35\textwidth]{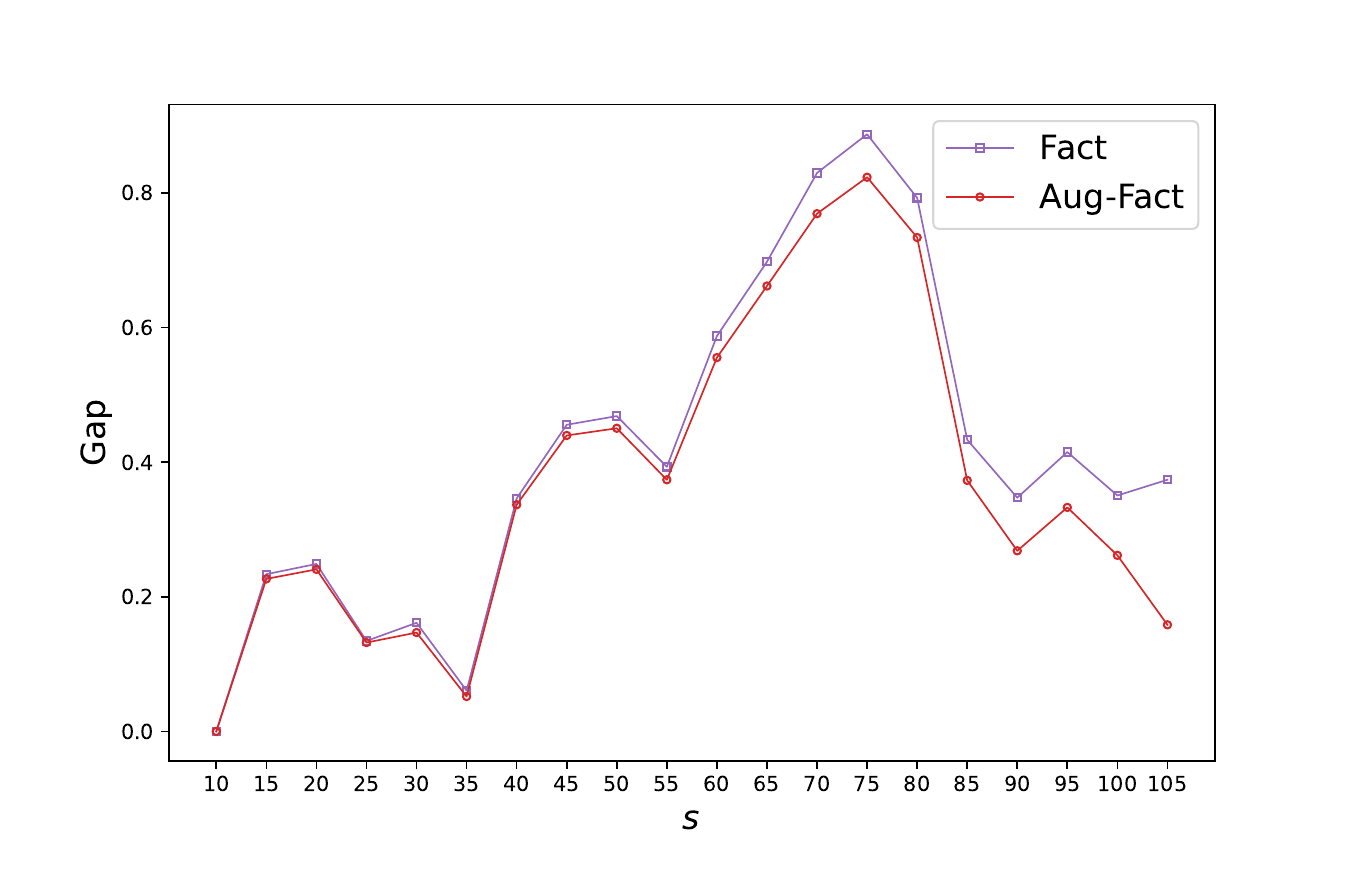}
	}
  \hspace{-1.8em}
	\subfigure[Gaps of \ref{eq:upper} and \ref{eq:ddf}] {\label{118_gap_small_ddf}
		\centering
\includegraphics[width=0.35\textwidth]{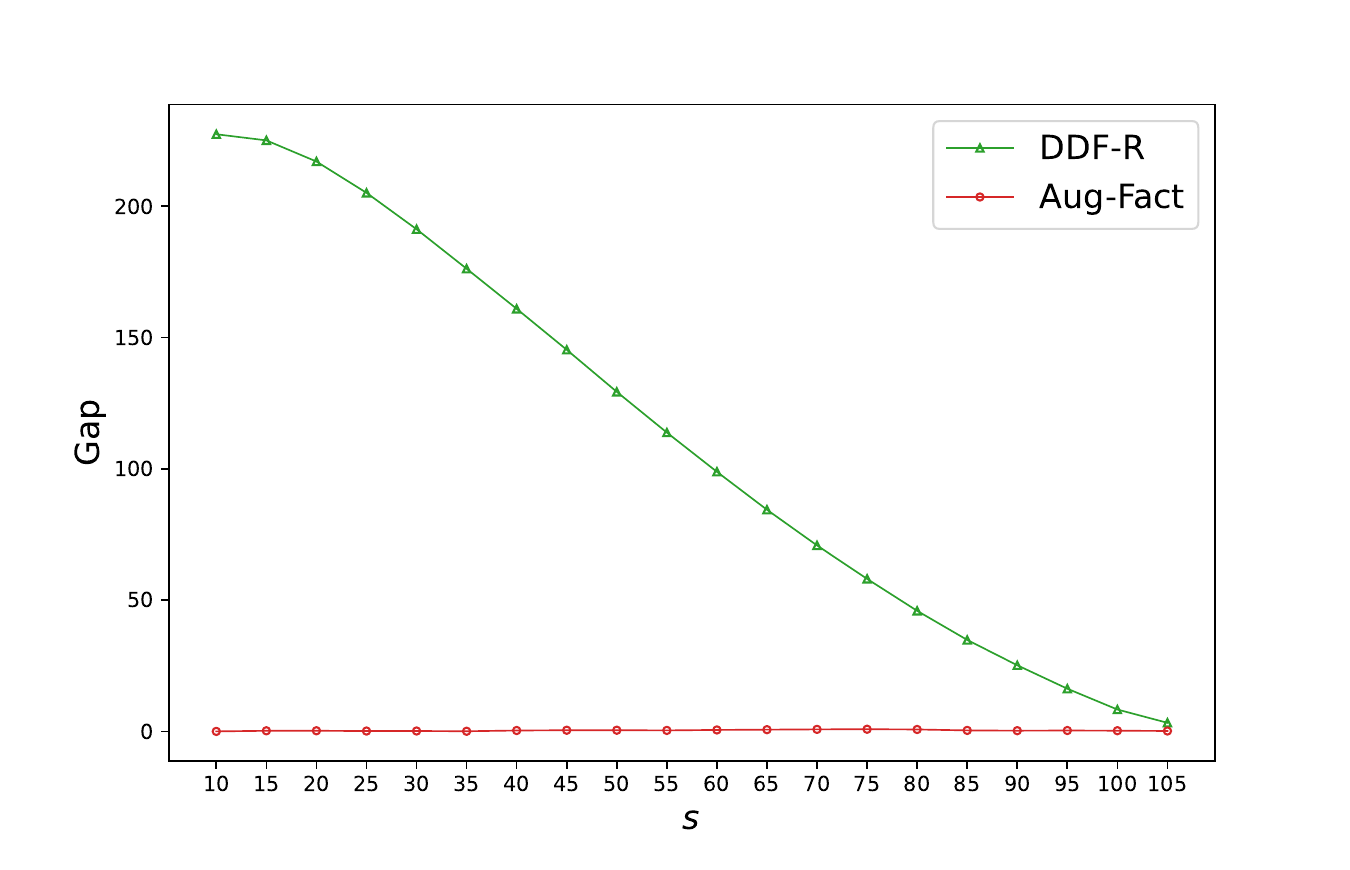}
	}
  \hspace{-1.8em}
 	\subfigure[Number of variables fixed] {\label{118_gap_small_fix}
		\centering
\includegraphics[width=0.35\textwidth]{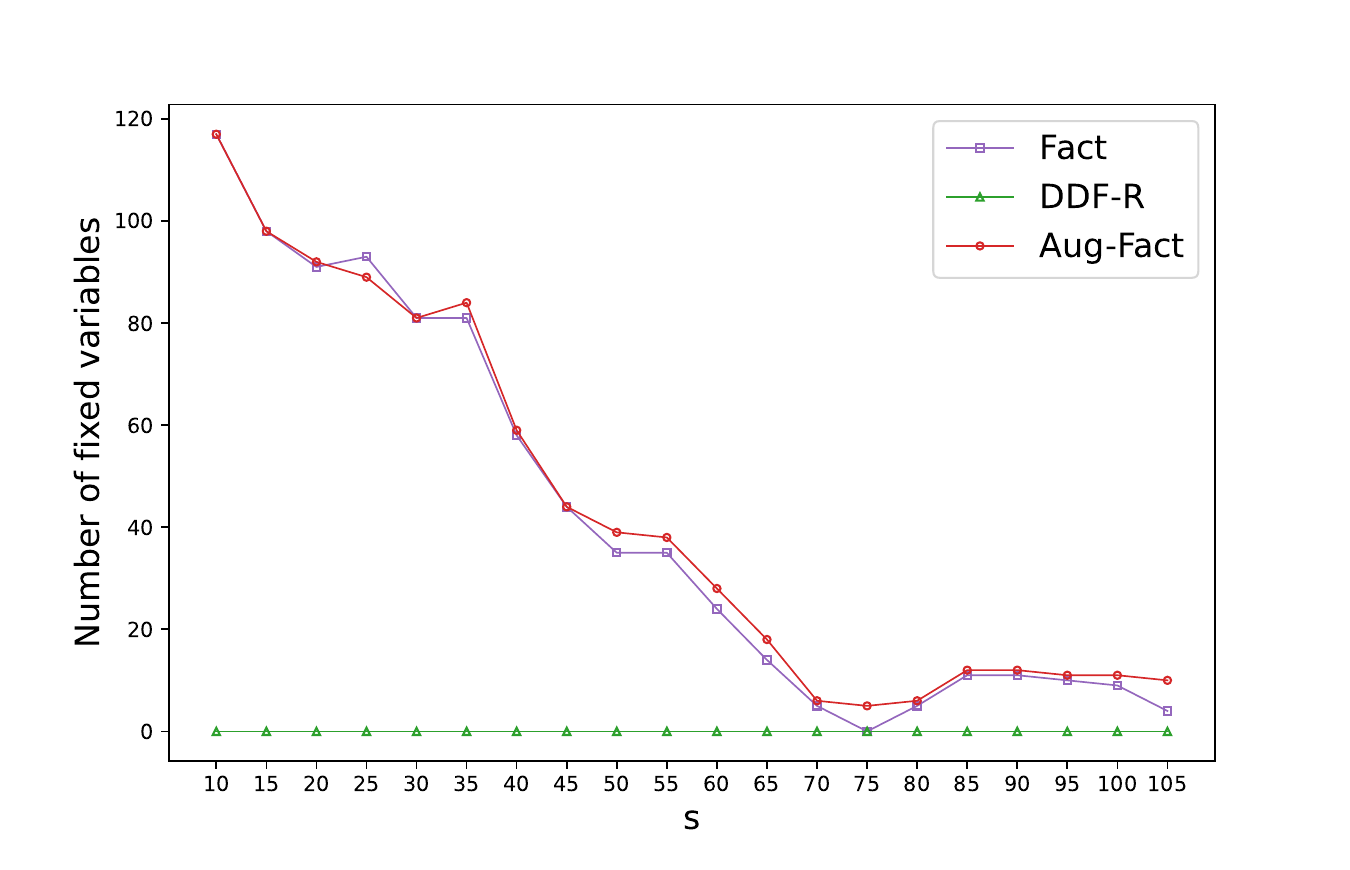}
	}
	\caption{IEEE $118$-bus instance and small PMU standard deviations with $\lambda_{\max}(\bm C)/\lambda_{\min}(\bm C)=2690744.66$ }\label{fig_118_small}
 \vspace{-2em}
\end{figure}

\begin{figure}[hbtp]
	\centering
	\hspace{-1.5em}
	\subfigure[Gaps of \ref{eq:upper} and \ref{eq:fact}] {\label{300_gap_large_fact}
\includegraphics[width=0.35\textwidth]{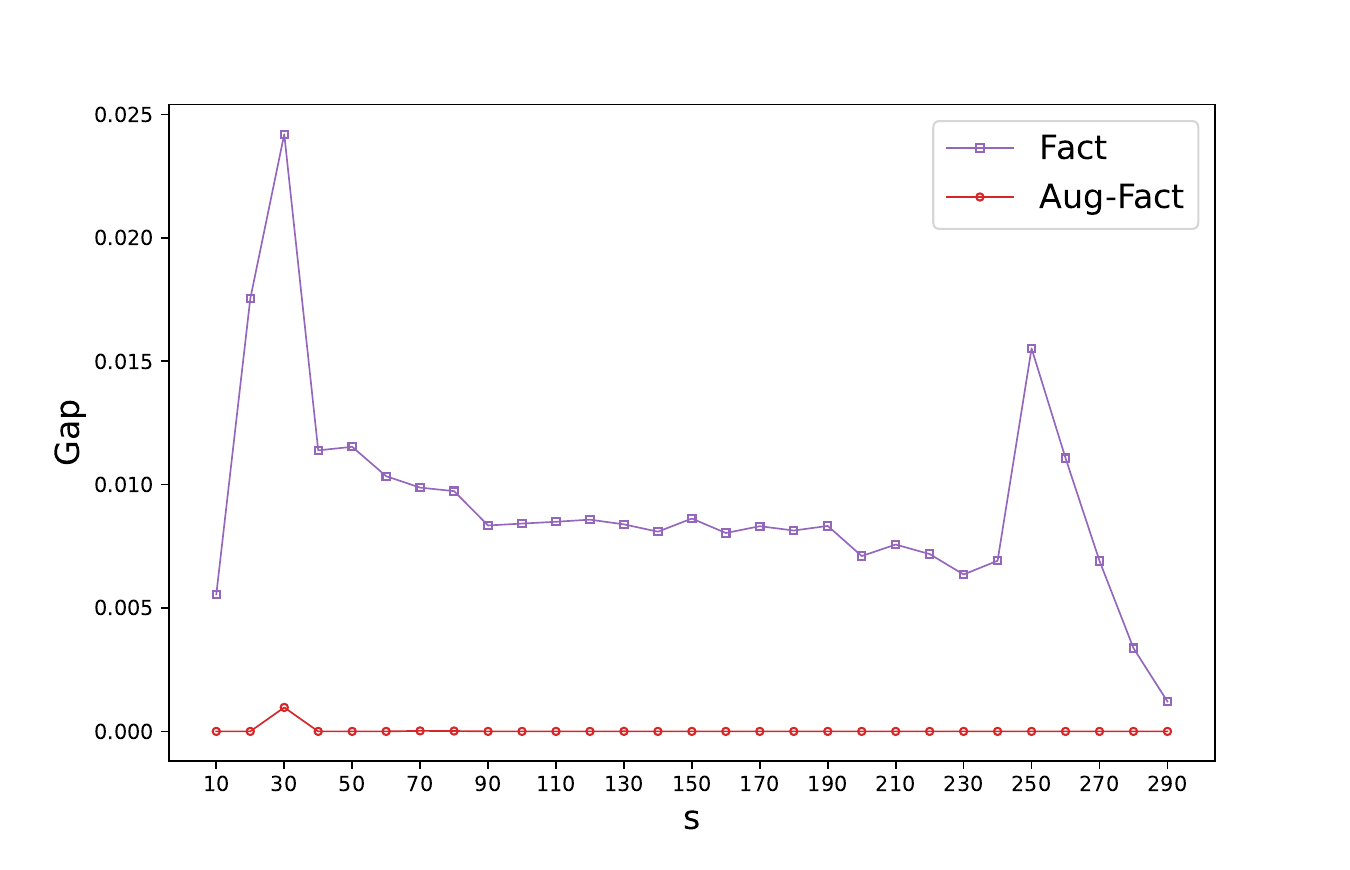}
	}
 \hspace{-1.8em}
	\subfigure[Gaps of \ref{eq:upper} and \ref{eq:ddf}] {\label{300_gap_large_ddf}
		\centering
\includegraphics[width=0.35\textwidth]{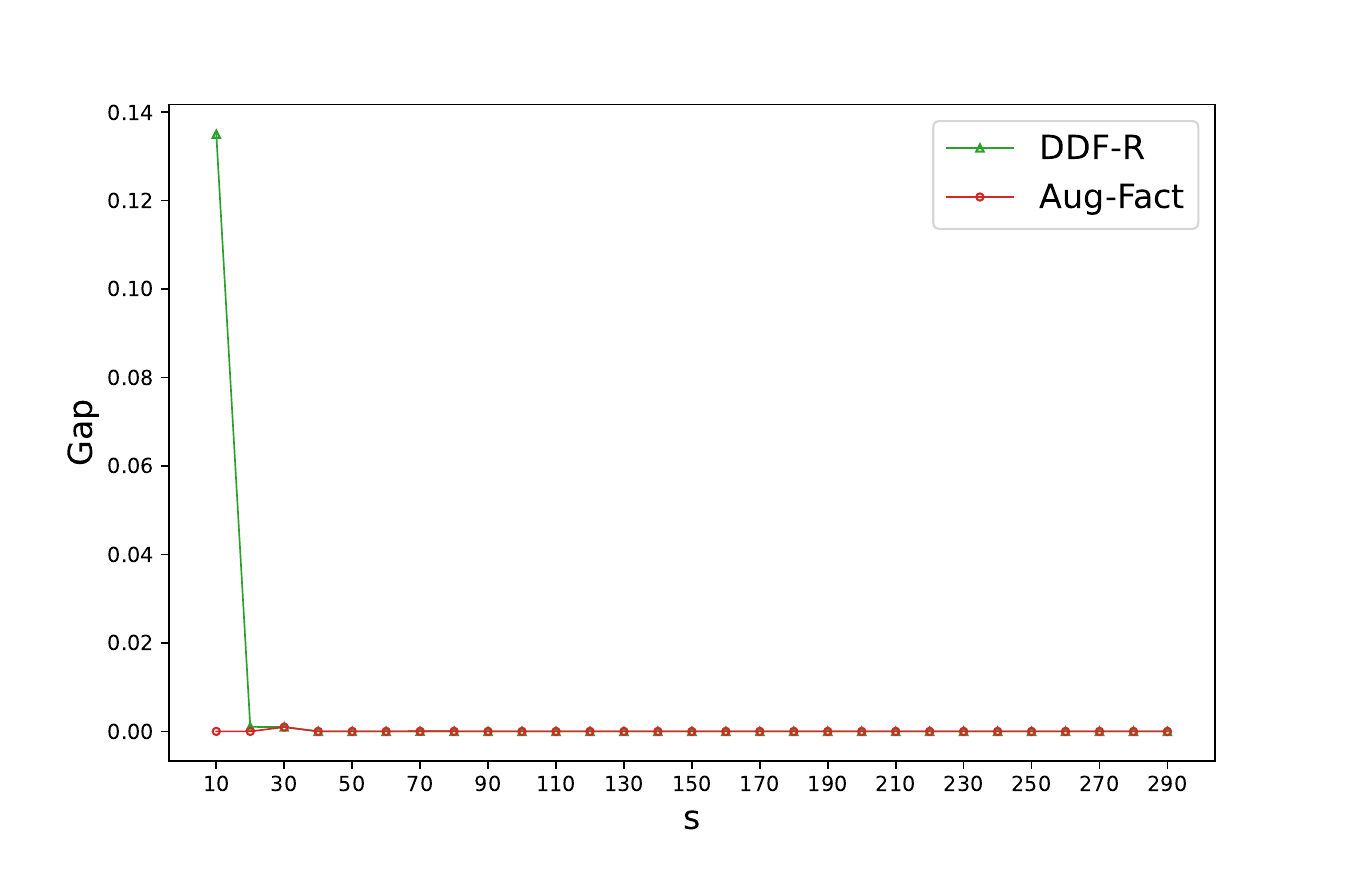}
	}
  \hspace{-1.8em}
 	\subfigure[Number of variables fixed] {\label{300_gap_large_fix}
		\centering
\includegraphics[width=0.35\textwidth]{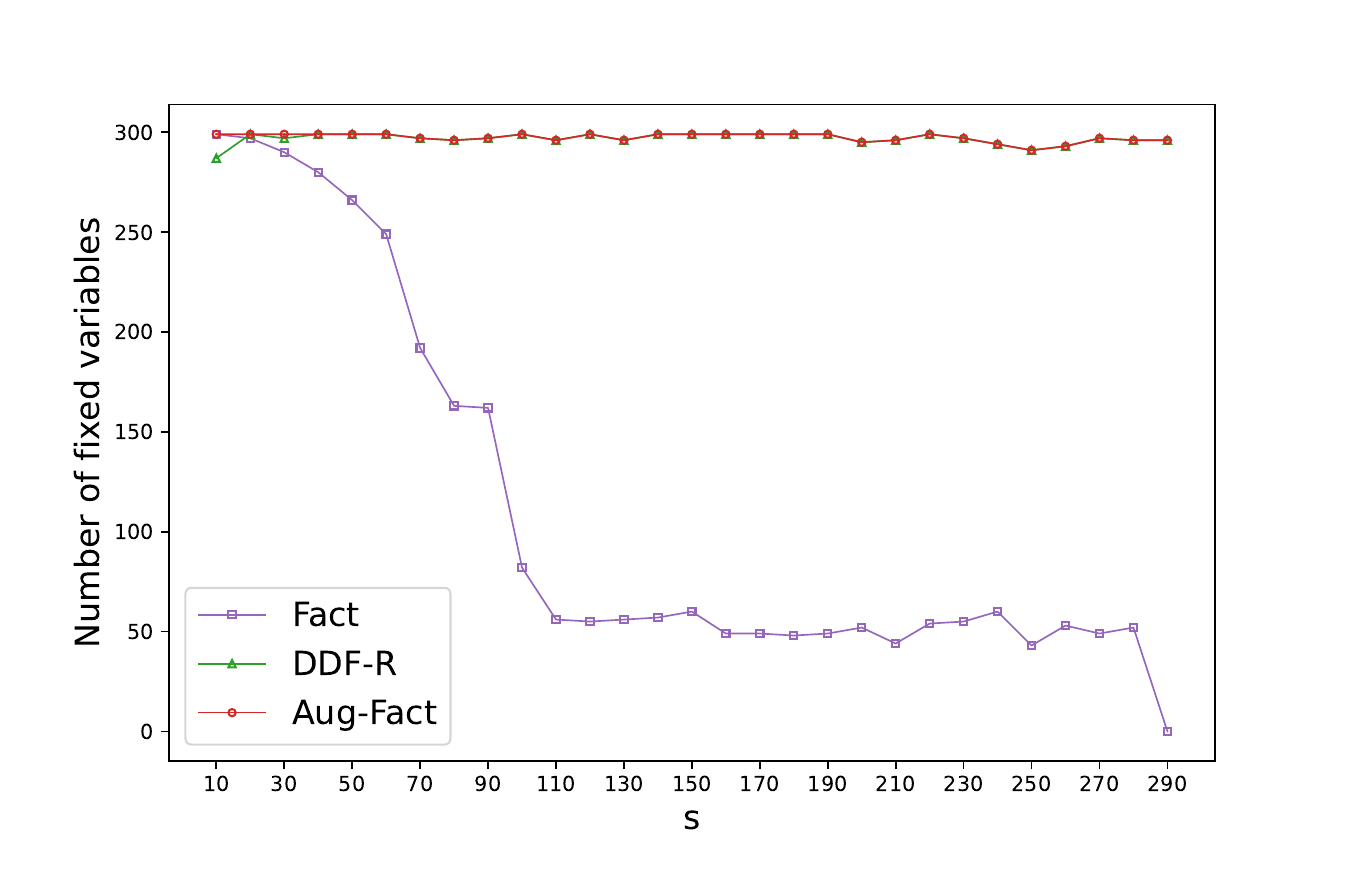}
	}
	\caption{IEEE $300$-bus instance and large PMU standard deviations with $\lambda_{\max}(\bm C)/\lambda_{\min}(\bm C)=6.50$  }\label{fig_300_large}
 	\vspace{-2em}
\end{figure}

\begin{figure}[hbtp]
	\centering
 \hspace{-1.5em}
	\subfigure[Gaps of \ref{eq:upper} and \ref{eq:fact}] {\label{300_gap_small_fact}
\includegraphics[width=0.35\textwidth]{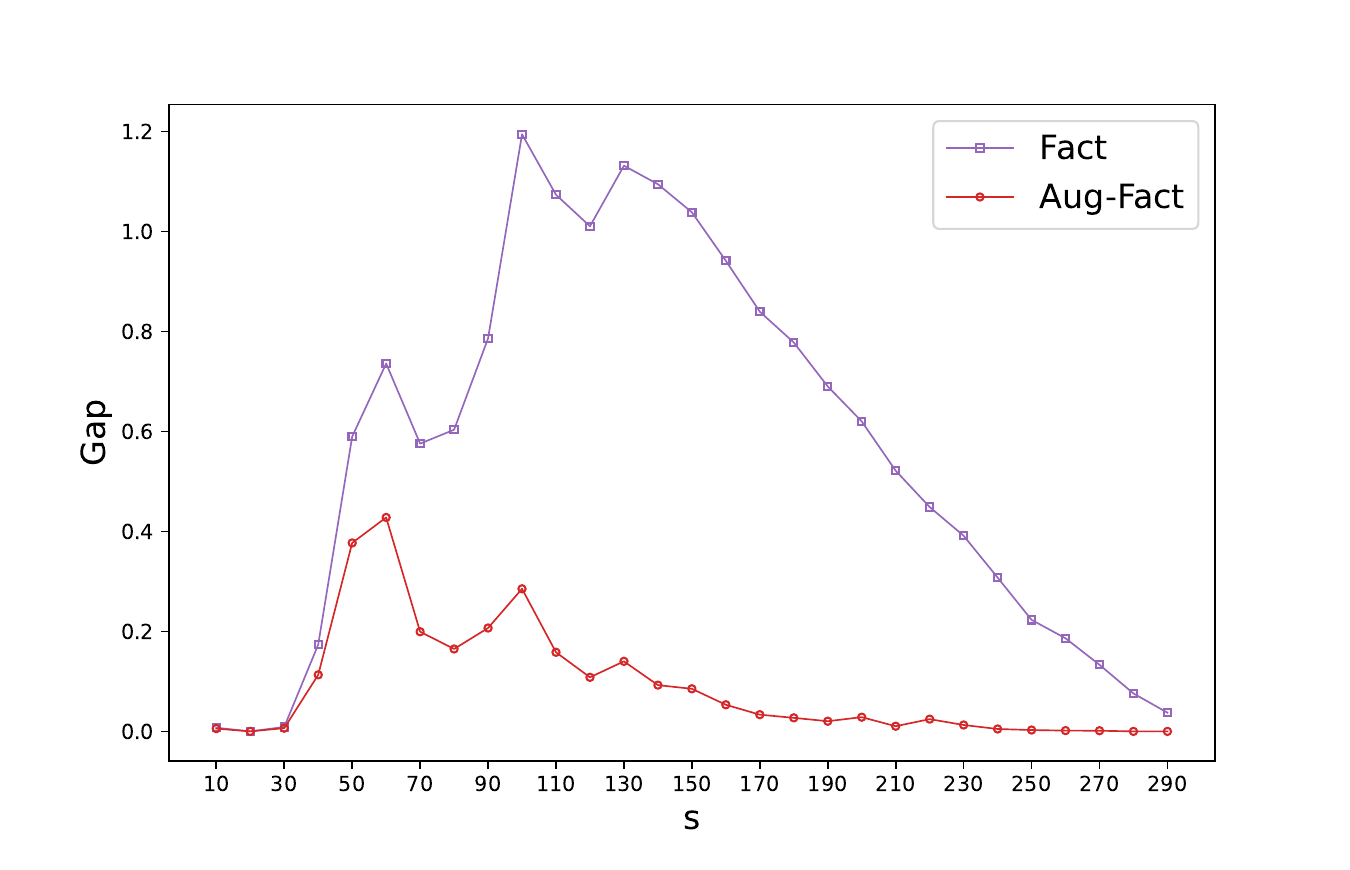}
	}
  \hspace{-1.8em}
	\subfigure[Gaps of \ref{eq:upper} and \ref{eq:ddf}] {\label{300_gap_small_ddf}
		\centering
\includegraphics[width=0.35\textwidth]{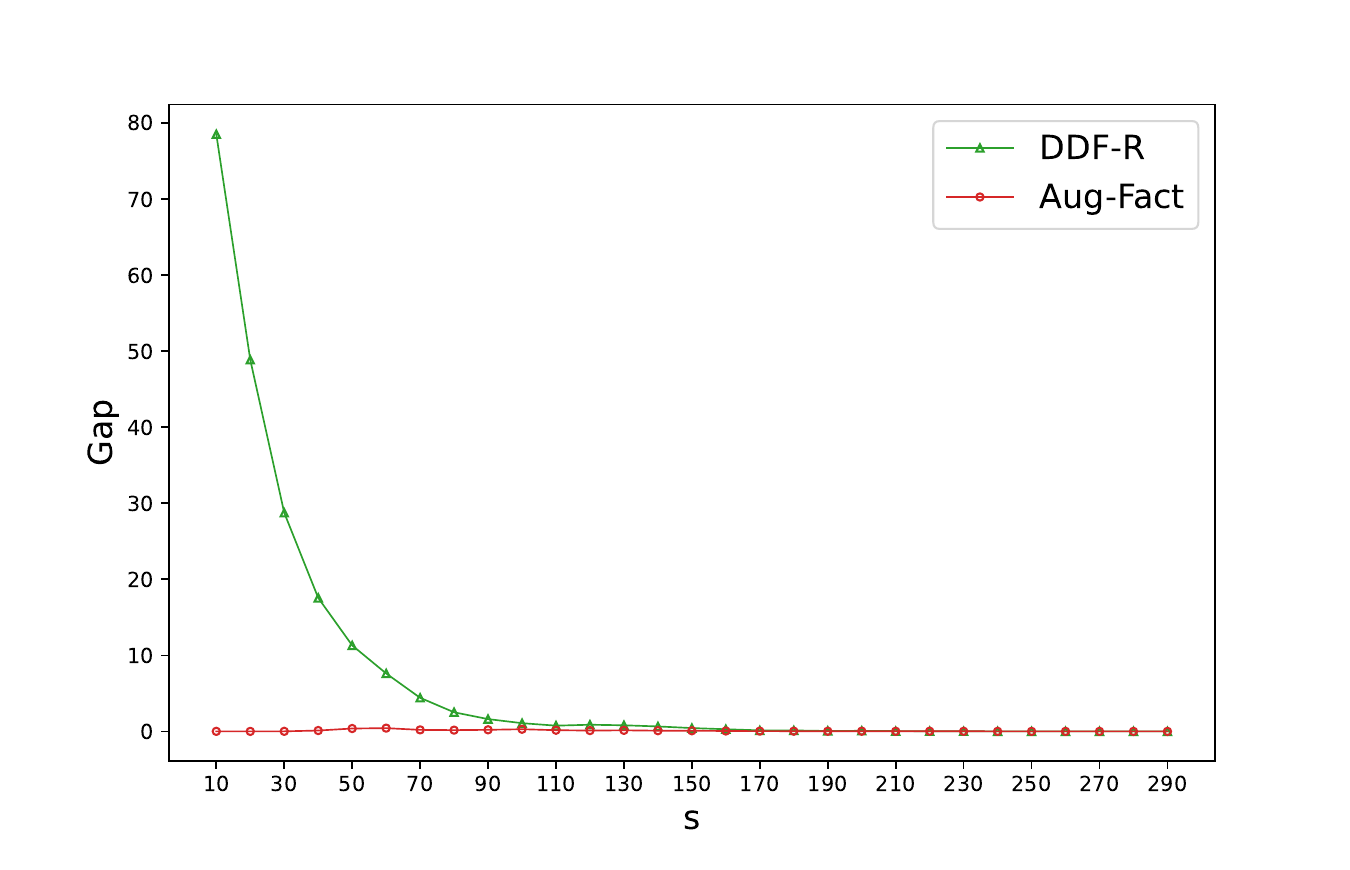}
	}
   \hspace{-1.8em}
 	\subfigure[Number of variables fixed] {\label{300_gap_small_fix}
		\centering
\includegraphics[width=0.35\textwidth]{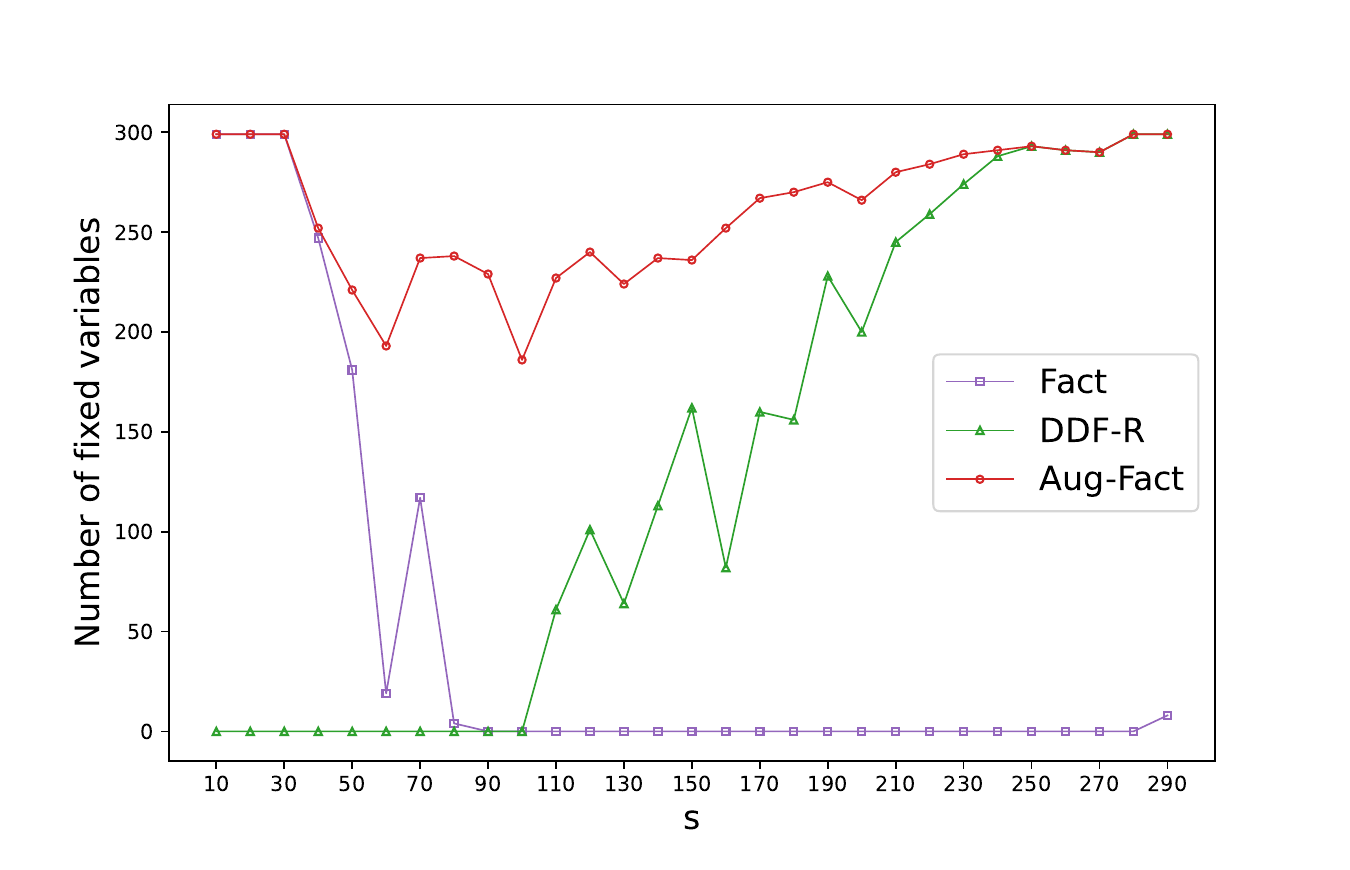}
	}
	\caption{IEEE $300$-bus instance and small PMU standard deviations with $\lambda_{\max}(\bm C)/\lambda_{\min}(\bm C)=5498.76$  }\label{fig_300_small}
 \vspace{-.5em}
\end{figure}

\section{Conclusions} \label{sec:con}
We developed a novel upper bound for the maximum entropy sampling problem, referred to as the augmented factorization bound. Our theoretical results include a thorough investigation into the monotonicity of this new bound and its superiority over two existing upper bounds, based on the theory of majorization and Schur-concave functions. Our numerical study demonstrated the strength of the proposed bound, yielding smaller gaps and fixing more variables than the state-of-the-art bounds. In future work, we plan to develop an efficient branch-and-bound implementation that incorporates the augmented factorization bound, solving MESP to optimality.
We also expect that our augmented factorization technique can apply to various machine learning and optimization problems with the cardinality constraint and Schur-concave objective functions,  such as A-optimal MESP, sparse PCA, and so on.

		\newpage
  \bibliographystyle{informs2014}
\bibliography{Areference.bib}

	\end{document}